\newtheorem{thm}{Theorem}[section]
\newtheorem{lem}[thm]{Lemma}
\newtheorem{cor}[thm]{Corollary}
\newtheorem{prop}[thm]{Proposition}
\newtheorem*{mainthm}{Main Theorem}
\theoremstyle{definition}
\newtheorem*{defi}{Definition}
\newtheorem*{rmk}{Remark}
\newtheorem*{ques}{Question}
\newtheorem*{nota}{Notations}
\newcommand{\DF}[2]{{\displaystyle\frac{#1}{#2}}}
\newcommand{\EC}{\widehat{\mathbb{C}}}
\newcommand{\C}{\mathbb{C}}
\newcommand{\D}{\mathbb{D}}
\newcommand{\N}{\mathbb{N}}
\newcommand{\Q}{\mathbb{Q}}
\newcommand{\R}{\mathbb{R}}
\newcommand{\Z}{\mathbb{Z}}
\newcommand{\OD}{\overline{\mathbb{D}}}
\newcommand{\MA}{\mathcal{A}}
\newcommand{\MB}{\mathcal{B}}
\newcommand{\MC}{\mathcal{C}}
\newcommand{\MD}{\mathcal{D}}
\newcommand{\MF}{\mathcal{F}}
\newcommand{\MH}{\mathcal{H}}
\newcommand{\MN}{\mathcal{N}}
\newcommand{\MO}{\mathcal{O}}
\newcommand{\MP}{\mathcal{P}}
\newcommand{\MQ}{\mathcal{Q}}
\newcommand{\MMR}{\mathcal{R}}
\newcommand{\IS}{\mathcal{IS}}
\newcommand{\ii}{\textup{i}}
\newcommand{\re}{\textup{Re}\,}
\newcommand{\im}{\textup{Im}\,}
\newcommand{\dd}{\textup{d}}
\newcommand{\Expo}{\mathbb{E}\textup{xp}}
\newcommand{\Log}{\mathbb{L}\textup{og}}
\newcommand{\diam}{\textup{diam}}
\newcommand{\dist}{\textup{dist}}
\newcommand{\cv}{\textup{cv}}
\newcommand{\cp}{\textup{cp}}
\newcommand{\HT}{\textup{HT}}
\newcommand{\Int}{\operatorname{int}}
\newcommand{\len}{\textup{len}}
\newcommand{\Comp}{\textup{Comp}}
\newcommand{\kc}{\textit{\textbf{k}}}
\makeatletter\@addtoreset{equation}{section}\makeatother
\begin{document}

\author{MITSUHIRO SHISHIKURA}
\address{Department of Mathematics, Kyoto University, Kyoto 606-8502, Japan}
\email{mitsu@math.kyoto-u.ac.jp}

\author{FEI YANG}
\address{Department of Mathematics, Nanjing University, Nanjing 210093, P. R. China}
\email{yangfei@nju.edu.cn}

\title[High type quadratic Siegel disks]{The high type quadratic Siegel disks are Jordan domains}

\begin{abstract}
Let $\alpha$ be an irrational number of sufficiently high type and suppose $P_\alpha(z)=e^{2\pi\ii\alpha}z+z^2$ has a Siegel disk $\Delta_\alpha$ centered at the origin. We prove that the boundary of $\Delta_\alpha$ is a Jordan curve, and that it contains the critical point $-e^{2\pi\ii\alpha}/2$ if and only if $\alpha$ is a Herman number.
\end{abstract}

\subjclass[2020]{Primary 37F10; Secondary 37F50, 37F25}

\keywords{Siegel disks; Jordan domains; Herman condition; high type; near-parabolic renormalization}

\date{\today}



\maketitle

{\setcounter{tocdepth}{1}
\tableofcontents
}

\section{Introduction}\label{introduction}

Let $f$ be a non-linear holomorphic function with $f(0)=0$ and $f'(0)=e^{2\pi\ii\alpha}$, where $0<\alpha<1$ is an irrational number. We say that $f$ is \textit{locally linearizable} at the fixed point 0 if there exists a holomorphic function defined near $0$ which conjugates $f$ to the \textit{rigid rotation} $R_\alpha(z)=e^{2\pi\ii\alpha}z$. The maximal region in which $f$ is conjugate to $R_\alpha$ is a simply connected domain called the \textit{Siegel disk} of $f$ centered at 0.

The existence of the Siegel disk of $f$ is dependent on the arithmetic condition of $\alpha\in(0,1)\setminus\Q$. Let
\begin{equation}
[0;a_1,a_2,\cdots]:=
\DF{1}{a_1
+\DF{1}{a_2
+\DF{1}{\ddots}}}
\end{equation}
be the \textit{continued fraction expansion} of $\alpha$. The rational numbers $p_n/q_n:=[0;$ $a_1$, $\cdots$, $a_n]$, $n\geqslant 1$, are the convergents of $\alpha$, where $p_n$ and $q_n$ are coprime positive integers. If $\alpha$ belongs to the \emph{Brjuno class}
\begin{equation}
\MB:=\{\alpha=[0;a_1,a_2,\cdots]\in (0,1)\setminus\Q~|~\mathop{\Sigma}\nolimits_{n=1}^\infty q_n^{-1}\log q_{n+1}<+\infty\},
\end{equation}
then any holomorphic germ $f$ with $f(0)=0$ and $f'(0)=e^{2\pi\ii\alpha}$ is locally linearizable at $0$ and hence $f$ has a Siegel disk centered at the origin \cite{Sie42, Brj71}. Yoccoz proved that the Brjuno condition is also necessary for the local linearization of the quadratic polynomial
\begin{equation}
P_\alpha(z):=e^{2\pi\ii\alpha}z+z^2:\C\to\C
\end{equation}
at the origin \cite{Yoc95}.

\subsection{Topology and obstructions of Siegel disk boundaries}

The dynamics in the Siegel disks is simple and one mainly concerns the properties on the boundaries. In the 1980s, Douady and Sullivan asked the following question (see \cite{Dou83}, \cite{Rog92a}):
\begin{ques}
Is the boundary of a Siegel disk a Jordan curve?
\end{ques}
This question is still open, even for quadratic polynomials. However, much progress has been made on this problem for various families of functions under preconditions. An irrational number $\alpha=[0;a_1,a_2,\cdots]$ is called \emph{bounded type} if $\sup_{n\geqslant 1}\{a_n\}<+\infty$. Douady-Herman, Zakeri, Yampolsky-Zakeri, Shishikura and Zhang, respectively, proved that the boundaries of bounded type Siegel disks of quadratic polynomials, cubic polynomials, some quadratic rational maps, all polynomials and all rational maps with degree at least two are quasi-circles (hence are Jordan curves) (see \cite{Dou87}, \cite{Her87}, \cite{Zak99}, \cite{YZ01}, \cite{Shi01}, \cite{Zha11}). This is also true for some transcendental entire functions (see \cite{Gey01}, \cite{Zha05}, \cite{Che06}, \cite{KZ09}, \cite{Zak10}, \cite{Yan13}, \cite{CE18}, \cite{ZFS20}).

An important breakthrough was made by Petersen and Zakeri in 2004. They proved that for almost all irrational number $\alpha$, the boundary of the Siegel disk of the quadratic polynomial $P_\alpha$ is a Jordan curve \cite{PZ04}. We refer these irrational numbers the \emph{PZ type}, i.e., $\log a_n=\MO(\sqrt{n})$ as $n\to\infty$, where $a_n$ is the $n$-th digit of the continued fraction expansion of $\alpha$. Recently, Zhang generalized this result to all polynomials \cite{Zha14} and obtained the same result for the sine family \cite{Zha16}.

\medskip
Suppose the closure of the Siegel disk of $f$ is compactly contained in the domain of definition of $f$. One may wonder what phenomena near the boundary of a Siegel disk prevents $f$ from having a larger linearization domain. Obviously, the presence of periodic cycles near the boundary is one of the reasons since any Siegel disk cannot contain periodic points except the center itself. It was proved by Avila and Cheraghi that under some condition on $\alpha$ every neighborhood of the Siegel disk of $P_\alpha$ contains infinitely many cycles \cite{AC18}, which is similar to the small cycle property that prevents linearization (see \cite{Yoc88} and \cite{Per92}).

On the other hand, note that any Siegel disk cannot contain a critical point. Hence the second question on the Siegel disk boundary is:
Does the boundary of a Siegel disk always contain a critical point?
The answer is no. Ghys and Herman gave the first examples of polynomials having a Siegel disk whose boundary does not contain a critical point (see \cite{Ghy84}, \cite{Her86} and \cite{Dou87}).

In relation to the results on the regularity\footnote{The word ``regularity'' here means the topological and geometric properties of the boundaries of the Siegel disks. See \cite{BC07}.} of the boundaries of the Siegel disks mentioned above (for the bounded type or PZ type rotation numbers), they also include the statement that the boundaries of those Siegel disks pass through at least one critical point. In particular, for the bounded type rotation numbers, Graczyk and \'{S}wi\c{a}tek proved a very general result: if an analytic function has a Siegel disk properly contained in the domain of holomorphy and the rotation number is of bounded type, then the boundary of the corresponding Siegel disk contains a critical point \cite{GS03}.

\medskip
Herman was one of the pioneers who studied the analytic diffeomorphisms on the circles \cite{Her79}. He introduced the following subset of irrational numbers.

\begin{defi}[{Herman numbers}]
Let $\MH$ be the set of irrational numbers $\alpha$ such that every orientation-preserving analytic circle diffeomorphism of rotation number $\alpha$ is analytically conjugate to the rigid rotation.
\end{defi}

Herman proved that the set $\MH$ is non-empty and contains a subset of Diophantine numbers \cite{Her79}. Yoccoz proved that $\MH$ contains all Diophantine numbers (and hence contains all bounded type and PZ type numbers), and also gave an arithmetic characterization of the numbers in $\MH$ \cite{Yoc02}.

\medskip
Suppose $f$ is an analytic function which has a Siegel disk properly contained in the domain of holomorphy. Ghys proved that if the rotation number belongs to $\MH$ and the boundary of the Siegel disk is a Jordan curve, then $f$ has a critical point in the boundary of the Siegel disk \cite{Ghy84}. Later, Herman generalized this result by dropping the topological condition on the Siegel disk boundary but requiring that the restriction of $f$ on the Siegel disk boundary is injective \cite{Her85} (see also \cite{Per97}). In particular, he proved that if a unicritical polynomial has a Siegel disk whose rotation number is contained in $\MH$, then the boundary of the Siegel disk contains a critical point. Recently, Ch\'{e}ritat and Roesch, Benini and Fagella, respectively, generalized this result to the polynomials with two critical values \cite{CR16} and to a special class of transcendental entire functions with two singular values \cite{BF18}.

For polynomials, Rogers proved that if the Siegel disk $\Delta$ is fixed and the rotation number is in $\MH$, then either $\partial\Delta$ contains a critical point or $\partial\Delta$ is an indecomposable continuum \cite{Rog98}. For the exponential map $E_\theta(z)=e^{2\pi\ii\theta}(e^z-1)$, it was proved by Herman that, if $E_\theta$ has a bounded Siegel disk $\Delta_\theta$, then $E_\theta$ is injective on $\partial\Delta_\theta$. Hence it follows from Herman's result that $\Delta_\theta$ is unbounded when $\theta\in\MH$ since $E_\theta$ has no critical points \cite{Her85}. Conversely, Herman, Baker and Rippon asked a question: if $\Delta_\theta$ is unbounded, is necessarily the singular value $-e^{2\pi\ii\theta}$ contained in $\partial\Delta_\theta$? Rippon showed that this is true for almost all $\theta$ \cite{Rip94} and the question was fully answered positively by Rempe \cite{Rem04} and independently by Buff and Fagella (unpublished). Moreover, Rempe also studied the Herman type Siegel disks of some other transcendental entire functions \cite{Rem08}.

\subsection{The statement of the main result}

The proofs of the regularity results for the bounded type and PZ type Siegel disks stated previously are all based on surgery: either quasiconformal or trans-quasiconformal. In these proofs, some pre-models, and usually, a single or a family of Blaschke products are needed. By surgery, the regularity and the existence of critical points on the boundaries of Siegel disks were proved at the same time.

\medskip
In this paper, without using surgeries we shall prove that the Siegel disks of some holomorphic maps are Jordan domains and that Herman type rotation number is also necessary for the existence of critical points on the Siegel disk boundaries. To this end, it requires us to restrict the rotation numbers to a special class since we need to use near-parabolic renormalization scheme. In \cite{IS08}, a renormalization operator $\MMR$ and a compact class $\MF$ that is invariant under $\MMR$ were introduced. All the maps in $\MF$ have a special covering structure. They have a neutral fixed point at the origin and a unique simple critical point in their domains of definition. The renormalization operator assigns a new map in $\MF$ to a given map of $\MF$ that is obtained by considering the return map to a sector landing at the origin. As a return map, one iterate of $\MMR f$ corresponds to many iterates of $f\in\MF$. To study very large iterates of $f$ near $0$, one hopes to repeat this process infinitely many times. However, to iterate $\MMR$ infinitely many times, the scheme requires the rotation number $\alpha$, where $f'(0)=e^{2\pi\ii\alpha}$, to be of \emph{high type}, that is, $\alpha$ belongs to
\begin{equation}
\HT_{N}:=\{\alpha=[0;a_1,a_2,\cdots]\in (0,1)\setminus\Q~|~a_n\geqslant N \text{ for all } n\geqslant 1\}
\end{equation}
for some big integer\footnote{The precise value of $N$ is not known. But the value of $N$ is likely to be not less than $20$. It is conjectured that a variation of the invariant class and renormalization may be defined for $N=1$.} $N\in\N$. In this paper we prove the following main result.

\begin{mainthm}\label{thm-main-1}
Let $\alpha$ be an irrational number of sufficiently high type and suppose $P_\alpha(z)=e^{2\pi\ii\alpha}z+z^2$ has a Siegel disk $\Delta_\alpha$ centered at the origin. Then the boundary of $\Delta_\alpha$ is a Jordan curve. Moreover, it contains the critical point $-e^{2\pi\ii\alpha}/2$ if and only if $\alpha$ is a Herman number.
\end{mainthm}

Note that $\HT_{N}$ has measure zero if $N\geqslant 2$. However, all the usual types of irrational numbers have non-empty intersections with $\HT_{N}$: bounded type, PZ type, Herman type and Brjuno type etc. In particular, $\HT_{N}$ contains some irrational numbers such that the Siegel disk boundary of $P_\alpha$ has the regularity studied in \cite{ABC04}, \cite{BC07} and the self-similarity studied in \cite{McM98b}. Rogers proved that the boundary of any bounded irreducible Siegel disk $\Delta$ is either tame: the conformal map from $\Delta$ to the unit disk has a continuous extension to $\partial\Delta$, or wild: $\partial\Delta$ is an indecomposable continuum \cite{Rog92b}. Recently, Ch\'{e}ritat constructed a holomorphic germ such that the corresponding Siegel disk is compactly contained in the domain of definition but the boundary is not locally connected \cite{Che11}. Our main theorem indicates that the boundaries of  quadratic Siegel disks should be tame.

As we have seen, in order to guarantee the existence of critical points on the boundaries of Siegel disks, Herman condition (i.e., the rotation number is of Herman type) appears usually as a requirement of sufficiency in most of the literature. As far as we know, the necessity only appears in \cite{BCR09}, where it proves that Herman condition is equivalent to the existence of a critical point on the boundary of the Siegel disks of a family of toy models.

In fact, besides the quadratic polynomials, the proof of the Main Theorem in this paper is also valid for all the maps in Inou-Shishikura's invariant class. Hence the Main Theorem is also true for some rational maps and transcendental entire functions. We would like to point out that it was proved in \cite{Yam08} and \cite{AL22} that the bounded type Siegel disks of the maps in Inou-Shishikura's class are quasi-disks if the rotation number is of sufficiently high type.

\medskip
By constructing topological models of the post-critical sets of the maps in the Inou-Shishikura's class for all high type numbers, Cheraghi gave an alternative proof of the Main Theorem independently (see \cite{Che22b}). Our proofs are different: we analyze the dynamics and carry out the computations in the renormalization tower directly.

Recently, Dudko and Lyubich made significant progress on the quadratic Siegel polynomials $P_\alpha$ \cite{DL22}. They proved that the restriction of $P_\alpha$ on the boundary of the Siegel disk $\Delta_\alpha$ of $P_\alpha$ is injective, which implies that $\partial\Delta_\alpha$ is not the whole Julia set of $P_\alpha$ (actually they proved a more general result for all $\alpha\in\R\setminus\Q$).

\subsection{Strategy of the proof}

Let $f_0$ be the normalized quadratic polynomial or a map in Inou-Shishikura's class (see \S\ref{subsec-IS-class}) satisfying $f_0(0)=0$ and $f_0'(0)=e^{2\pi\ii\alpha}$, where $\alpha$ is of Brjuno type and of sufficiently high type. For $n\geqslant 0$, let $f_{n+1}=\MMR f_n$ be the sequence of the maps which are generated by the near-parabolic renormalization operator $\MMR$. For each $n\geqslant 0$, we use $\MP_n$ to denote the perturbed petal of $f_n$ and $\Phi_n$ the corresponding perturbed Fatou coordinate (see definitions in \S\ref{subsec-near-para}).

In order to prove that the boundary of the Siegel disk of $f_0$ is a Jordan curve, we construct a sequence of continuous curves $(\gamma_0^n:[0,1]\to\C)_{n\in\N}$ in the perturbed Fatou coordinate plane of $f_0$ by using a renormalization tower.
Each $\gamma^n_0$  is obtained from $\gamma^0_n$ (in the perturbed Fatou coordinate plane of $f_n$) by going up through the renormalization tower, i.e., by lifting and then spreading around.
In Lemma \ref{lema-fixed-disk} we show that the inner radius of the Siegel disk $\Delta_n$ of $f_n$ is estimated by the Brjuno sum up to a multiplicative constant. Then we choose the suitable height of $\gamma_n^0$ such that $\Phi^{-1}_n(\gamma^0_n)$ is contained in the Siegel disk $\Delta_n$ of $f_n$. Consequently, $\Phi_0^{-1}(\gamma_0^n)$ with $n\in\N$ are curves in the Siegel disk of $f_0$.

The key ingredient is Proposition \ref{prop-Cauchy-sequence}: the sequence of the continuous curves $(\gamma_0^n:[0,1]\to\C)_{n\in\N}$ converges uniformly to a limit $\gamma^\infty:[0,1]\to\C$, which is also a continuous curve.
For the proof, we use a family of ``straight'' curves $\eta^0_n$ to encode the difference between $\gamma^0_n$ and $\gamma^1_n$ in the Fatou coordinate plane of $f_n$. The diameters of the $\eta^0_n$ are discussed in Step 2 of the proof. The diameters of the lifts of $\eta^0_n$ are estimated by two kinds of contraction: one is the uniform contraction with respect to the hyperbolic metrics in subdomains of the renormalization tower (see Lemma \ref{lema:exp-conv}) and the other is ``Brjuno-type arithmetic'' -- estimates from \S\ref{subsec-esti-2} (see also Lemma \ref{lema-go-up}). In conclusion, the oscillations of the curves $(\gamma_0^n:[0,1]\to\C)_{n\in\N}$ are bounded in terms of the Brjuno sum, i.e., $(\gamma_0^n:[0,1]\to\C)_{n\in\N}$ form an equicontinuous family.
Because of the contraction by going up the renormalization tower, the sequence $\Phi^{-1}_0(\gamma^n_0)$ converges exponentially fast towards the boundary of $\Delta_0$ (see Proposition \ref{prop-tend-to-bdy}).

\medskip
For the second part of the Main Theorem which concerns Herman condition, we construct a Jordan arc $\Gamma_0$ in the non-escaping set of $f_0$ which connects the unique critical value $\cv$ with the origin, where $\gamma_0:=\Phi_0(\Gamma_0)$ is contained in a half-infinite strip $\mho$ with finite width.
The existence of $\Gamma_0$ is proved in Lemma \ref{lem-Jordan-arc} and the proof is also based on the contraction via going up the renormalization tower. To apply the contraction property successfully, the shape of $\Phi_0^{-1}(\mho)$ needs to be controlled and this is Lemma \ref{lema-height} whose proof is given in the Appendix. The construction of $\Gamma_0$ guarantees that $\Gamma_n=\Expo\circ\Phi_{n-1}(\Gamma_{n-1})$ is also a Jordan arc connecting $\cv$ with the origin and $\gamma_n=\Phi_n(\Gamma_n)$ is contained in $\mho$ for all $n\geqslant 1$.

We study the homeomorphism $s_{\alpha_n}:=\Phi_n\circ \Expo:\gamma_{n-1}\to\gamma_n$ from the simple curve in one level of the renormalization to another.
Lemmas \ref{lema-all-greater} and \ref{lema-eventually-above} estimate the dynamics of the $s_{\alpha_n}$ in terms of the Brjuno sum.
Based on the sequence $(s_{\alpha_n})_{n\in\N}$, we define a new class of irrational numbers $\widetilde{\MH}_N$ which is a subset of Brjuno numbers, where $N$ is a large number. After comparing the properties of $s_{\alpha_n}$ and Yoccoz's arithmetic characterization of $\MH$, we prove that $\widetilde{\MH}_N$ is exactly equal to the set of high type Herman numbers (see Lemmas \ref{lem-subset-alpha} and \ref{lem-subset-alpha-yy}). On the other hand, we prove that the boundary of the Siegel disk of $f_0$ contains the critical value $\cv$ if and only if $\alpha\in\widetilde{\MH}_N$ (see Proposition \ref{prop-equi-alpha}). This implies that the second part of the Main Theorem holds.

\subsection{Some observations}

There are several applications of Inou-Shishikura's invariant class. The first remarkable application is that Buff and Ch\'{e}ritat used it as one of the main tools to prove the existence of Julia sets of quadratic polynomials with positive area \cite{BC12}. Recently, Cheraghi and his collaborators have found several other important applications. In \cite{Che13} and \cite{Che19}, Cheraghi developed several elaborate analytic techniques based on Inou-Shishikura's results. The tools in \cite{Che13} and \cite{Che19} have led to part of the recent major progresses on the dynamics of quadratic polynomials. For examples, the Feigenbaum Julia sets with positive area (which is different from the examples in \cite{BC12}) have been found in \cite{AL22}, the Marmi-Moussa-Yoccoz conjecture for rotation numbers of high type has been proved in \cite{CC15}, the local connectivity of the Mandelbrot set at some infinitely satellite renormalizable points was proved in \cite{CS15}, some statistical properties of the quadratic polynomials was depicted in \cite{AC18}, the topological structure and the Hausdorff dimension of high type irrationally indifferent attractors were characterized in \cite{Che22b} and \cite{CDY20} respectively.

Recently, Ch\'{e}ritat generalized the near-parabolic renormalization theory to the unicritical families of any finite degrees \cite{Che22}. See also \cite{Yan21p} for the corresponding theory of local degree three. Hence there is a hope to generalize the Main Theorem in this paper to all unicritical polynomials.

\medskip
\noindent\textbf{Acknowledgements.} We would like to thank Xavier Buff and Arnaud Ch\'{e}ritat for helpful discussions and offering the manuscript \cite{BCR09}. We are also very grateful to Davoud Cheraghi for pointing out a gap in an earlier version of the paper and providing many invaluable comments and suggestions. The second author was indebted to Institut de Math\'{e}matiques de Toulouse for its hospitality during his visit in 2014/2015 where and when partial of this paper was written. He would also like to thank Davoud Cheraghi and Arnaud Ch\'{e}ritat for persistent encouragements.
We are very grateful to both referees for their very insightful comments and suggestions.
This work was supported by NSFC (grant Nos.\,12222107, 12071210), NSF of Jiangsu Province (grant No.\,BK20191246) and the CSC program (2014/2015).

\begin{nota}
We use $\N$, $\N^+$, $\Z$, $\Q$, $\R$ and $\C$ to denote the set of all natural numbers, positive integers, integers, rational numbers, real numbers and complex numbers, respectively. The Riemann sphere and the unit disk are denoted by $\EC=\C\cup\{\infty\}$ and $\D=\{z\in\C:|z|<1\}$ respectively. A round disk in $\C$ is denoted by $\D(a,r)=\{z\in\C:|z-a|<r\}$ and $\OD(a,r)$ is its closure. Let $x\in\R$ be a non-negative number, we use $\lfloor x\rfloor$ to denote the integer part of $x$.

For a set $X\subset \C$ and a number $\delta>0$, let $B_\delta(X):=\bigcup_{z\in X}\D(z,\delta)$ be the $\delta$-neighborhood of $X$. For a number $a\in\C$ and a set $X\subset \C$, we denote $aX:=\{az:z\in X\}$ and $X\pm a:=\{z\pm a:z\in X\}$. Let $A,B$ be two subsets in $\C$. We say that $A$ is \textit{compactly contained} in $B$ if the closure of $A$ is compact and contained in the interior $\Int(B)$ of $B$ and we denote it by $A\Subset B$. We use $\diam(X)$ to denote the Euclidean diameter of a set $X\subset\C$ and $\len(\gamma)$ the Euclidean length of a rectifiable curve $\gamma\subset\C$.
\end{nota}

\section{Near-parabolic renormalization scheme}

In this section, we summarize some results in \cite{IS08}, \cite{BC12}, \cite{AC18} and \cite{Che19} which will be used in this paper. Parts of the theories can be also found in \cite{Shi98} and \cite{Shi00a}.

\subsection{Inou-Shishikura's class}\label{subsec-IS-class}

Let $P(z):=z(1+z)^2$ be a cubic polynomial with a parabolic fixed point at $0$ with multiplier $1$. Then $P$ has a critical point $\cp_{P}:=-1/3$ which is mapped to the critical value $\cv_P:=-{4}/{27}$. It has also another critical point $-1$ which is mapped to $0$. Consider the ellipse
\begin{equation}\label{ellipse}
E:=\left\{x+y\ii\in\C:\left(\frac{x+0.18}{1.24}\right)^2+\left(\frac{y}{1.04}\right)^2\leqslant 1\right\}
\end{equation}
and define\footnote{\,The domain $U$ is denoted by $V$ in \cite{IS08}. }
\begin{equation}\label{U-and-psi-1}
U:=\psi_1(\EC\setminus E), \text{~where~} \psi_1(z):=-\frac{4z}{(1+z)^2}.
\end{equation}
The domain $U$ is symmetric about the real axis, contains the parabolic fixed point $0$ and the critical point $\cp_P$, but $\overline{U}\cap(-\infty,-1]=\emptyset$ (see \cite[\S 5.A]{IS08} and Figure \ref{Fig_U-zoom}).

\begin{figure}[!htpb]
  \setlength{\unitlength}{1mm}
  \centering
  \includegraphics[height=55mm]{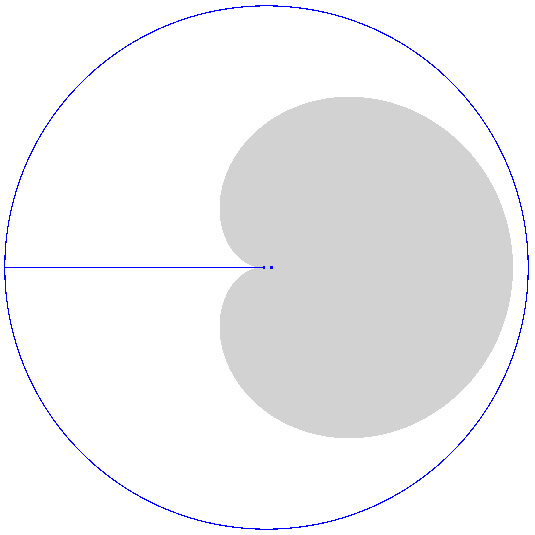}\hskip0.3cm
  \includegraphics[height=55mm]{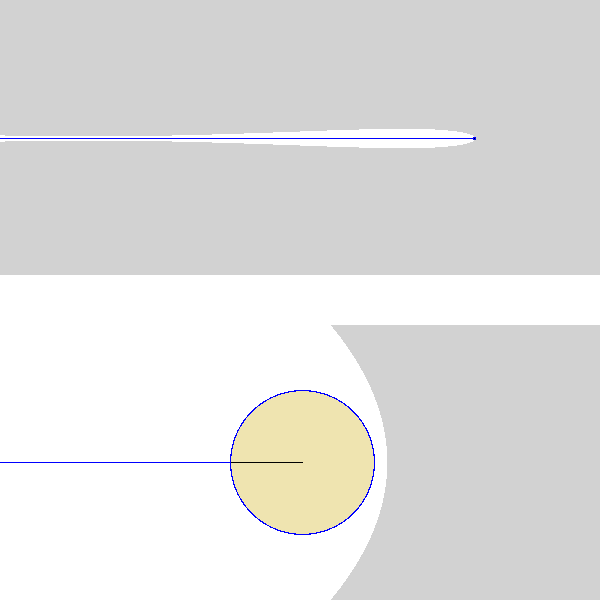}
  \put(-25,48){$U$}
  \put(-10,41){$-1$}
  \put(-30,9){$-1$}
  \put(-35,4){$B$}
  \put(-11,12){$U$}
  \put(-101,40){$U'$}
  \put(-76,32){$U$}
  \put(-84,26){$0$}
  \put(-89,23){$-1$}
  \caption{The domains $U$ (the gray part), $U'$ (the white region bounded by the blue curves, see \eqref{equ-U-pri} for the definition) and their successive zooms near $-1$. The outer boundary of $U'$ looks like a circle with radius about $35$ and the rightmost point of $U$ is about $32.2$. The widths of these pictures are $72$, $0.6$ and $0.0075$ respectively. It can be seen clearly from these pictures that $\overline{U}\cap(-\infty,-1]=\emptyset$ and $U\Subset U'$.}
  \label{Fig_U-zoom}
\end{figure}

For a given function $f$, we denote its domain of definition by $U_f$. Following \cite[\S 4]{IS08}, we define a class of maps\footnote{\,The definition of $\IS_0$ is based on the class $\MF_1$ in \cite{IS08}. There the conformal map $\varphi$ in the definition of $\IS_0$ is required to have a quasiconformal extension to $\C$. This condition is used by Inou and Shishikura to prove the uniform contraction of the near-parabolic renormalization operator under the Teichm\"{u}ller metric. We modify the definition here since we will not use this property in this paper.}
\begin{equation}
\IS_0:=
\left\{f=P\circ\varphi^{-1}:U_f\to\C
\left|
\begin{array}{l}
0\in U_f \text{ is open in }\C, ~\varphi:U\to U_f \text{ is} \\
\text{conformal},~\varphi(0)=0 \text{ and } \varphi'(0)=1
\end{array}
\right.
\right\}.
\end{equation}
Each map in this class has a parabolic fixed point at $0$, a unique critical point at $\cp_f:=\varphi(-1/3)\in U_f$ and a unique critical value at $\cv:=-4/27$ which is independent of $f$.

For $\alpha\in\R$, we define
\begin{equation}
\IS_\alpha:=\{f(z)=f_0(e^{2\pi\ii\alpha}z):e^{-2\pi\ii\alpha}\,U_{f_0}\to\C ~|~f_0\in\IS_0\}.
\end{equation}
For convenience, we normalize the quadratic polynomials to
\begin{equation}
Q_\alpha(z)= e^{2\pi\ii\alpha}z+\frac{27}{16}e^{4\pi\ii\alpha}z^2
\end{equation}
such that all $Q_\alpha$ have the same critical value $-4/27$ as the maps in $\IS_\alpha$. In particular, $Q_\alpha=Q_0\circ R_\alpha$, where $R_\alpha(z)=e^{2\pi\ii\alpha}z$. We would like to mention that the quadratic polynomial $Q_\alpha$ is not in the class $\IS_\alpha$.

\begin{thm}[{Leau-Fatou \cite[\S 10]{Mil06} and Inou-Shishikura \cite{IS08}}]\label{thm-IS-attr-rep-1}
For all $f\in\IS_0\cup\{Q_0\}$, there exist two simply connected domains $\MP_{attr,f}$, $\MP_{rep,f}\subset U_f$ and two univalent maps $\Phi_{attr,f}:\MP_{attr,f}\to\C$, $\Phi_{rep,f}:\MP_{rep,f}\to\C$ such that
\begin{enumerate}
\item $\MP_{attr,f}$ and $\MP_{rep,f}$ are bounded by piecewise analytic curves and are compactly contained in $U_f$, $\cp_f\in\partial \MP_{attr,f}$ and $\partial \MP_{attr,f}\cap\partial \MP_{rep,f}=\{0\}$;
\item The image $\Phi_{attr,f}(\MP_{attr,f})$ is a right half plane and $\Phi_{rep,f}(\MP_{rep,f})$ is a left half plane; and
\item $\Phi_{attr,f}(f(z))=\Phi_{attr,f}(z)+1$ for $z\in\MP_{attr,f}$ and $\Phi_{rep,f}^{-1}(\zeta)=f(\Phi_{rep,f}^{-1}(\zeta-1))$ for $\zeta\in\Phi_{rep,f}(\MP_{rep,f})$.
\end{enumerate}
\end{thm}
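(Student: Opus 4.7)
My plan is to combine the classical Leau--Fatou flower theorem with the explicit petal construction from \cite[\S 5]{IS08}. The first step is to check that every $f\in\IS_0\cup\{Q_0\}$ has a non-degenerate parabolic fixed point at the origin. For $P(z)=z(1+z)^2=z+2z^2+z^3$ and for $Q_0(z)=z+\tfrac{27}{16}z^2$ this is immediate. For $f=P\circ\varphi^{-1}\in\IS_0$, the normalization $\varphi(0)=0$ and $\varphi'(0)=1$ forces $f(z)=z+2z^2+O(z^3)$, so the origin is a simple parabolic fixed point with attracting axis pointing along the negative real direction.

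The second step is to invoke the classical Leau--Fatou theorem to obtain topological discs $\widetilde{\MP}_{attr,f},\widetilde{\MP}_{rep,f}$ together with univalent Fatou coordinates $\widetilde{\Phi}_{attr,f},\widetilde{\Phi}_{rep,f}$ satisfying the Abel equations of (c) on sufficiently small neighborhoods of $0$. These coordinates arise from the standard asymptotic change of variable $w=-1/(2z)$ corrected by iteration. To upgrade the images to genuine half-planes as required in (b), one extends $\widetilde{\Phi}_{attr,f}$ by pulling back through $f$ and $\widetilde{\Phi}_{rep,f}$ by pushing forward through $f$, as far as these extensions remain univalent inside $U_f$.

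The third step is to reshape the petals to realize (a) and (c), following \cite[\S 5.B]{IS08}. One defines $\MP_{attr,f}$ as the preimage, under the extended Fatou coordinate, of a right half-plane $\{\re\zeta>c\}$ whose vertical boundary is chosen to pass through the $\widetilde{\Phi}_{attr,f}$-image of the critical value $-4/27$; pulling this line back then produces a boundary curve that is piecewise analytic, with the singular point occurring exactly at $\cp_f$ where two analytic arcs meet, so that automatically $\cp_f\in\partial\MP_{attr,f}$. A symmetric left half-plane produces $\MP_{rep,f}$, and the two half-planes are calibrated so that the landing arcs of the pull-backs at $0$ agree only at the origin itself, securing $\partial\MP_{attr,f}\cap\partial\MP_{rep,f}=\{0\}$. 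For $f\in\IS_0$ one first performs the construction for $P$ using the semi-conjugacy $\psi_1$ of \eqref{U-and-psi-1}, where $\psi_1$ provides explicit control on the image of the critical value in the Fatou coordinate plane, and then transports everything through $\varphi$; for $Q_0$ the construction is carried out directly from its own power series.

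The main obstacle is this last calibration: arranging simultaneously that $\cp_f$ lies on $\partial\MP_{attr,f}$, that both petals are compactly contained in $U_f$, and that their boundaries meet only at $0$, while keeping the half-plane images in (b). This forces a rather rigid choice of the boundary half-plane and requires verifying, via the concrete semi-conjugacy $\psi_1$ and the explicit domain $U=\psi_1(\EC\setminus E)$, that the relevant pull-back remains univalent and stays strictly inside $U_f$. The bulk of the work is contained in \cite[\S 5]{IS08} for $P$; the extension to $f\in\IS_0$ is purely a matter of transport by $\varphi$, and the case $Q_0$ is handled by the same scheme applied to its own Fatou coordinate.
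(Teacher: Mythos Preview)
The paper does not actually prove this theorem: it is stated as a background result, attributed in the heading to Leau--Fatou and Inou--Shishikura, and the text immediately proceeds to the normalization of $\Phi_{attr,f}$ and $\Phi_{rep,f}$ without any argument. The proof lives in \cite{IS08} (specifically \S5) together with the classical Leau--Fatou flower theorem; the paper simply imports the conclusion.

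Your sketch is a reasonable outline of exactly that imported argument: non-degenerate parabolic germ, classical Fatou coordinates on small petals, extension to half-plane images, and then the explicit reshaping from \cite[\S5]{IS08} to pin $\cp_f$ on $\partial\MP_{attr,f}$ and arrange the boundary intersection at $0$. One small correction: in \cite{IS08} the construction for general $f=P\circ\varphi^{-1}\in\IS_0$ is not done by first treating $P$ and then transporting by $\varphi$; rather, the authors pass to the conjugate class $\IS_0^Q$ via $\psi_0(z)=-4/z$ (moving the parabolic point to $\infty$) and build the petals $D_{1}$, $D_{-1}^\sharp$, etc.\ directly for $F=Q\circ\varphi^{-1}$ using explicit estimates on $Q$ and Koebe distortion bounds on $\varphi$. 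The map $\psi_1$ you mention relates $P$ and $Q$ but is not the transport mechanism for the petals themselves. This does not affect the correctness of your overall plan, only the bookkeeping of which coordinate the hard estimates are performed in.
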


\noindent\textbf{Normalization of $\Phi_{attr,f}$ and $\Phi_{rep,f}$.}
The univalent map $\Phi_{attr,f}$ (resp. $\Phi_{rep,f}$) in Theorem \ref{thm-IS-attr-rep-1} is called an \emph{attracting (resp. repelling) Fatou coordinate} of $f$ and $\MP_{attr,f}$ (resp. $\MP_{rep,f}$) is called an \emph{attracting (resp. repelling) petal}. The attracting Fatou coordinate $\Phi_{attr,f}$ can be naturally extended to the immediate attracting basin $\MA_{attr,f}$ of $0$. Specifically, for $z\in \MA_{attr,f}$ such that $f^{\circ k}(z)\in\MP_{attr,f}$ with $k\geqslant 0$, one can define
\begin{equation}
\Phi_{attr,f}(z):=\Phi_{attr,f}(f^{\circ k}(z))-k.
\end{equation}
Since $\Phi_{attr,f}$ is unique up to an additive constant, we \emph{normalize} it by $\Phi_{attr,f}(\cp_f)=0$. Therefore, we have  $\Phi_{attr,f}(\MP_{attr,f})=\{\zeta\in\C:\re\zeta>0\}$.

Every $f\in\IS_0\cup\{Q_0\}$ can be written as $f(z)=z+a_2 z^2+a_3 z^3+\MO(z^4)$ in a neighborhood of $0$, where $a_2\neq 0$.
For $z$ in a component $\Omega_f$ of $\MA_{attr,f}\cap\MP_{rep,f}$ such that $\im\Phi_{rep,f}(z)\to+\infty$ as $z\to 0$, we have (see \cite[Proposition 2.2.1]{Shi00a}):
\begin{equation}
\begin{split}
\Phi_{attr,f}(z)&=-\frac{1}{a_2 z}-\gamma\log\Big(-\frac{1}{a_2 z}\Big)+C_{attr}+o(1), \\
\Phi_{rep,f}(z)&=-\frac{1}{a_2 z}-\gamma\log\Big(-\frac{1}{a_2 z}\Big)+C_{rep}+o(1),
\end{split}
\end{equation}
where $\gamma=1-{a_3}/{a_2^2}$ is the \textit{iterative residue} of $f$ and $C_{attr}$, $C_{rep}$ are constants.
Since $\Phi_{rep,f}$ is also unique up to an additive constant, we \emph{normalize} it by setting $C_{rep}:=C_{attr}$, i.e.,
$\Phi_{attr,f}(z)-\Phi_{rep,f}(z)\to 0$ as $z\to 0$ in $\Omega_f$.

\subsection{Near-parabolic renormalization}\label{subsec-near-para}

We need to consider the case that a sequence of functions converges to a limiting function and the neighborhoods of a function need to be defined.

\begin{defi}[{Neighborhoods of a function}]
Let $f:U_f\to\C$ be a given function. A \emph{neighborhood} of $f$ is
\begin{equation}
\MN=\MN(f;K,\varepsilon)=\left\{g:U_g\to\EC\,\left|\, K\subset U_g \text{~and~} \mathop{\textup{sup}}\limits_{z\in K}d_{\EC}(g(z),f(z))<\varepsilon\right.\right\},
\end{equation}
where $d_{\EC}$ denotes the spherical distance, $K$ is a compact subset contained in $U_f$ and $\varepsilon>0$. A sequence $(f_n)$ is said to \textit{converge} to $f$ \emph{uniformly on compact sets} if for any neighborhood $\MN$ of $f$, there exists $n_0>0$ such that $f_n\in\MN$ for all $n\geqslant n_0$.
\end{defi}

If $f\in\bigcup_{\alpha\in[0,1)}\IS_\alpha\cup\{Q_\alpha\}$, we denote by $\alpha_f\in[0,1)$ the rotation number of $f$ at the origin, i.e., the real number $\alpha_f\in[0,1)$ so that $f'(0)=e^{2\pi\ii\alpha_f}$. If $\alpha_f>0$ is small, besides the origin, the map $f$ has another fixed point $\sigma_f\neq0$ near $0$ in $U_f$, which depends continuously on $f$ (see \cite[\S 3.2]{Shi00a} or \cite[Lemma 9, p.\,707]{BC12}).

\begin{prop}[{\cite[Proposition 12, p.\,707]{BC12}, see Figure \ref{Fig_perturbed-Fatou-coor}}]\label{prop-BC-prop-12}
There exist $\kc\in\N^+$ and $\varepsilon_1>0$ satisfying $\lfloor\tfrac{1}{\varepsilon_1}\rfloor-\kc>1$, such that for all $f\in\IS_\alpha\cup\{Q_\alpha\}$ with $\alpha\in(0,\varepsilon_1]$, there exist a Jordan domain $\MP_f\subset U_f$ and a univalent map $\Phi_f:\MP_f\to\C$, such that
\begin{enumerate}
\item $\MP_f$ contains $\cv$ and it is bounded by two arcs joining $0$ and $\sigma_f$;
\item $\Phi_f(\cv)=1$, $\Phi_f(\MP_f)=\{\zeta\in\C:0<\re \zeta<\lfloor\tfrac{1}{\alpha_f}\rfloor-\kc\}$ with $\im \Phi_f(z)\to +\infty$ as $z\to 0$ and $\im \Phi_f(z)\to -\infty$ as $z\to \sigma_f$ in $\MP_f$;
\item If $z\in\MP_f$ and $\re\Phi_f(z)<\lfloor\tfrac{1}{\alpha_f}\rfloor-\kc-1$, then $f(z)\in\MP_f$ and $\Phi_f(f(z))=\Phi_f(z)+1$; and
\item If $(f_n)$ is a sequence of maps in $\bigcup_{\alpha\in(0,\varepsilon_1]}\IS_\alpha\cup\{Q_\alpha\}$ converging to a map $f_0\in\IS_0\cup\{Q_0\}$, then any compact set $K\subset\MP_{attr,f_0}$ is contained in $\MP_{f_n}$ for $n$ large enough and the sequence $(\Phi_{f_n})$ converges to $\Phi_{attr,f_0}$ uniformly on $K$; Moreover, any compact set $K\subset\MP_{rep,f_0}$ is contained in $\MP_{f_n}$ for $n$ large enough and the sequence $(\Phi_{f_n}-\frac{1}{\alpha_{f_n}})$ converges to $\Phi_{rep,f_0}$ uniformly on $K$.
\end{enumerate}
\end{prop}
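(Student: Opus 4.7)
The plan is to construct $\MP_f$ and $\Phi_f$ by perturbing the parabolic Fatou coordinates supplied by Theorem \ref{thm-IS-attr-rep-1}. For the limiting map $f_0 \in \IS_0 \cup \{Q_0\}$, the attracting and repelling petals $\MP_{attr,f_0}$, $\MP_{rep,f_0}$ together cover a full punctured neighbourhood of $0$, and the two Fatou coordinates differ by a function tending to $0$ along the attracting basin. When the parabolic multiplier is perturbed to $e^{2\pi\ii\alpha_f}$ with $\alpha_f>0$ small, the fixed point at $0$ gives birth to a nearby simple fixed point $\sigma_f\to 0$, and the ``\'Ecalle cylinder eaten from both sides'' picture shows that $f$ opens a gate through which orbits flow from a neighbourhood of $0$ to a neighbourhood of $\sigma_f$ in approximately $1/\alpha_f$ iterates. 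The domain $\MP_f$ will be precisely this gate, truncated by a safety buffer of $\kc$ fundamental domains away from $\sigma_f$.

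For the construction, first I would fix a reference curve $\gamma\subset\MP_{attr,f_0}$ so that $\Phi_{attr,f_0}(\gamma)$ is a vertical segment at $\re=0$, and consider the fundamental strip bounded by $\gamma$ and $f_0(\gamma)$. For $f$ sufficiently close to $f_0$ with $\alpha_f\in(0,r_1]$, the curve $\gamma$ deforms continuously to an arc $\gamma_f$ that lands at both $0$ and $\sigma_f$, so that the region between $\gamma_f$ and $f(\gamma_f)$ is a fundamental rectangle for $f$ near the gate. Define $\MP_f$ as the union of the first $\lfloor 1/\alpha_f\rfloor-\kc$ forward iterates of this rectangle, glued along their common sides; the buffer $\kc$ is a universal integer, chosen large enough that every iterate remains inside $U_f$ and avoids preimages of $\cp_f$ lying outside the gate. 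The coordinate $\Phi_f$ is then obtained by identifying the fundamental rectangle with the strip $\{0<\re\zeta<1\}$, extending to all of $\MP_f$ by the functional relation $\Phi_f(f(z))=\Phi_f(z)+1$, and finally normalising by $\Phi_f(\cv)=1$.

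Properties (a)--(c) are built into the construction: $\MP_f$ is Jordan because it is a chain of topological rectangles glued edge-to-edge; its two boundary arcs are $\gamma_f$ and its $(\lfloor 1/\alpha_f\rfloor-\kc)$-th iterate, both joining $0$ to $\sigma_f$; the image under $\Phi_f$ is the vertical strip of width $\lfloor 1/\alpha_f\rfloor-\kc$ by the functional equation; and $\cv\in\MP_f$ because in the unperturbed limit the orbit of $\cv$ lands on the reference curve at the position with $\re\Phi_{attr,f_0}=0$, so that one iterate later it has real coordinate $1$, and $\kc$ is arranged so that this location survives inside $\MP_f$ after perturbation. For the convergence in (d), on a compact set $K\Subset\MP_{attr,f_0}$ the continuous dependence of Fatou coordinates on the map gives $\Phi_{f_n}\to\Phi_{attr,f_0}$ uniformly on $K$; on the repelling side $K\Subset\MP_{rep,f_0}$ the identification runs via the ``right'' end of the strip, so it is $\Phi_{f_n}-1/\alpha_{f_n}$ that converges to $\Phi_{rep,f_0}$, in agreement with the normalisation of $\Phi_{rep,f_0}$ at $0$ from above.

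The hard part will be the \emph{uniformity} of $\kc$ and $r_1$ over the whole class $\IS_\alpha\cup\{Q_\alpha\}$. This requires a Carath\'eodory-compactness argument on $\IS_0\cup\{Q_0\}$ to get uniform control on the shape of the parabolic petals, the location of $\cv$ along the attracting axis, and a lower bound on the distance from the critical orbit to $\partial U_f$. The class $\IS_0$ is compact by construction, so compactness extends to $\IS_0\cup\{Q_0\}$ by adjoining $Q_0$ as a single additional point; continuity of the Fatou coordinates then produces a neighbourhood of each $f_0$ on which the construction goes through, and a finite subcover yields the universal constants $\kc$ and $r_1$. The quadratic polynomial $Q_0$, not a member of $\IS_0$, is handled as a separate case: one checks directly that $Q_0$ admits attracting and repelling Fatou coordinates with the same normalisations and compactness properties as those of $\IS_0$, which is classical and adds no essential difficulty.
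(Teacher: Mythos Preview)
The paper does not give its own proof of this proposition: it is quoted from \cite{BC12} (for the class $\IS_\alpha$) with a remark that the quadratic case is classical and handled in \cite{Shi00}. So there is no in-paper argument to compare against directly; what one can compare is your sketch against the actual construction in those references, whose machinery the present paper later relies on (see \eqref{equ-tau}--\eqref{equ-L-f} and Lemma~\ref{lema-key-pre-Che}).

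Your outline captures the correct geometric picture (the gate between $0$ and $\sigma_f$, the approximately $1/\alpha_f$ fundamental domains, the compactness of $\IS_0$ giving uniformity), but the mechanism you propose is not the one used, and as written it has a genuine gap. You take a vertical curve $\gamma$ in $\MP_{attr,f_0}$ and assert that it ``deforms continuously to an arc $\gamma_f$ that lands at both $0$ and $\sigma_f$''; you then build $\Phi_f$ by ``identifying the fundamental rectangle with the strip $\{0<\re\zeta<1\}$''. Neither step is a construction. The curve $\gamma$ lands only at $0$ in the parabolic limit, and there is no a~priori deformation producing an arc joining \emph{both} fixed points of $f$; likewise, a conformal identification of a fundamental domain with a unit strip that extends univalently by the functional equation is exactly what a Fatou coordinate \emph{is}, so invoking it presupposes the object you want to build. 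The standard route reverses the logic: one introduces the explicit universal cover $\tau_f(w)=\sigma_f/(1-e^{-2\pi\ii\alpha w})$ of $\EC\setminus\{0,\sigma_f\}$, lifts $f$ to $F_f(w)=w+1+o(1)$, and constructs the Fatou coordinate $L_f$ of $F_f$ on a domain $\widetilde{\MP}_f$ (a long vertical strip avoiding the lattice $\Z/\alpha$) by the usual quasiconformal or functional-equation argument; then $\Phi_f:=L_f\circ\tau_f^{-1}$ and $\MP_f:=\tau_f(\widetilde{\MP}_f)$. The arcs joining $0$ and $\sigma_f$, the width $\lfloor 1/\alpha_f\rfloor-\kc$, and the uniformity of $\kc$ all fall out of explicit estimates on $F_f$ (cf.\ Lemma~\ref{lema-key-pre-Che}(a)) rather than from a compactness-plus-deformation argument on curves. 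Your compactness idea for uniformity is correct in spirit, but it is applied to the estimates on $F_f$ and $L_f$, not to the petals directly. A minor slip: you place $\cv$ at $\re\Phi_{attr,f_0}=0$, but with the normalisation $\Phi_{attr,f_0}(\cp_{f_0})=0$ one has $\Phi_{attr,f_0}(\cv)=1$; it is $\cp_{f_0}$, not $\cv$, that sits on the reference curve.
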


\begin{figure}[!htpb]
  \setlength{\unitlength}{1mm}
  \centering
  \includegraphics[width=0.95\textwidth]{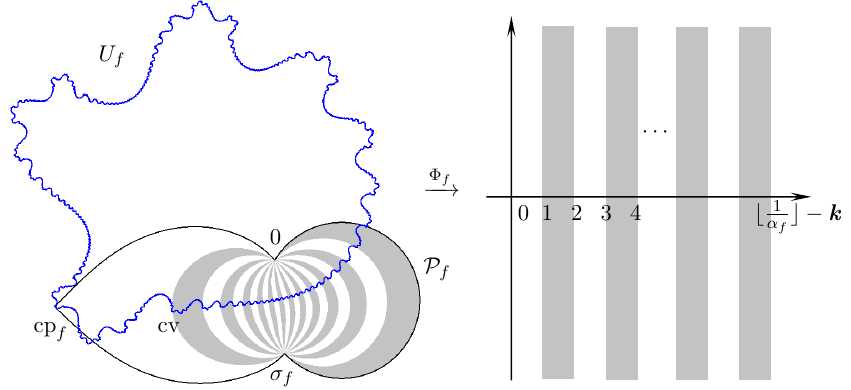}
  \caption{The perturbed Fatou coordinate $\Phi_f$ and its domain of definition $\MP_f$. The image of $\MP_f$ under $\Phi_f$ has been colored accordingly by the same color on the right. The blue set on the left depicts the forward orbit of the critical point $\cp_f$.}
  \label{Fig_perturbed-Fatou-coor}
\end{figure}

Proposition \ref{prop-BC-prop-12} was proved in \cite{BC12} only for Inou-Shishikura's class. However, when $f=Q_\alpha$ with sufficiently small $\alpha>0$, the existence of the domain $\MP_f$ and the coordinate $\Phi_f:\MP_f\to\C$ satisfying the properties in the above proposition is classic (see \cite{Shi00a}). The map $\Phi_f$ in Proposition \ref{prop-BC-prop-12} is called the \emph{(perturbed) Fatou coordinate} of $f$ and $\MP_f$ is called a \emph{(perturbed) petal}.

\begin{defi}[{see Figure \ref{Fig_near-para-norm-defi}}]
Let $f\in\IS_\alpha\cup\{Q_\alpha\}$ with $\alpha\in(0,\varepsilon_1]$, where $\varepsilon_1>0$ is the constant introduced in Proposition \ref{prop-BC-prop-12}. Define
\begin{equation}\label{defi-C-f-alpha}
\begin{split}
\MC_f:=&\,\{z\in\MP_f:1/2\leqslant\re\Phi_f(z)\leqslant 3/2 \text{~and~} -2<\im\Phi_f(z)\leqslant 2\}, \text{~and}\\
\MC_f^\sharp:=&\,\{z\in\MP_f:1/2\leqslant\re\Phi_f(z)\leqslant 3/2 \text{~and~} \im\Phi_f(z)\geqslant 2\}.
\end{split}
\end{equation}
Note that $\cv=-4/27\in \Int\, \MC_f$ and $0\in\partial \MC_f^\sharp$.
\end{defi}

\begin{prop}[{\cite[Proposition 2.7]{Che19}, see Figure \ref{Fig_near-para-norm-defi}}]\label{prop-CC-2}
There exist constants $\varepsilon_1'\in(0,\varepsilon_1]$ and $\kc_0\in\N^+$ such that for all $f\in\IS_\alpha\cup\{Q_\alpha\}$ with $\alpha\in(0,\varepsilon_1']$, there exists a positive integer $k_f\in[1,\kc_0]$ such that
\begin{enumerate}
\item For all $1\leqslant k\leqslant k_f$, the unique connected component $(\MC_f^\sharp)^{-k}$ of $f^{-k}(\MC_f^\sharp)$ that contains $0$ in its closure is relatively compact in $U_f$ and $f^{\circ k}:(\MC_f^\sharp)^{-k}\to\MC_f^\sharp$ is an isomorphism, and the unique connected component $\MC_f^{-k}$ of $f^{-k}(\MC_f)$ that intersects $(\MC_f^\sharp)^{-k}$ is relatively compact in $U_f$ and $f^{\circ k}:\MC_f^{-k}\to\MC_f$ is a covering of degree $2$ ramified above $\cv$; and
\item $k_f$ is the \emph{smallest} positive integer such that $\MC_f^{-k_f}\cup(\MC_f^\sharp)^{-k_f}\subset\{z\in\MP_f:0<\re\Phi_f(z)<\lfloor\tfrac{1}{\alpha_f}\rfloor-\kc-\tfrac{1}{2}\}$.
\end{enumerate}
\end{prop}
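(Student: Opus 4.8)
The plan is to extract this proposition from the near-parabolic renormalization scheme of Inou-Shishikura \cite{IS08} and Buff-Ch\'{e}ritat \cite{BC12}, of which it is in essence a book-keeping consequence; concretely I would use Theorem \ref{thm-IS-attr-rep-1}, Proposition \ref{prop-BC-prop-12}, the fact that each $f\in\IS_\alpha$ is a branched covering of the fixed cubic $P$ (namely $f=P\circ\varphi^{-1}$, up to a rotation), and the classical parabolic implosion picture for $Q_\alpha$. Write $W_f:=\lfloor\tfrac{1}{\alpha_f}\rfloor-\kc$, so that $\Phi_f(\MP_f)=\{0<\re\zeta<W_f\}$, $\Phi_f(\MC_f\cup\MC_f^\sharp)=\{\tfrac12\le\re\zeta\le\tfrac32,\ \im\zeta>-2\}$, and $\MC_f^\sharp=\Phi_f^{-1}(\{\tfrac12\le\re\zeta\le\tfrac32,\ \im\zeta\ge\eta_{\alpha_f}\})$, the last set shrinking to the fixed point $0$ as $\eta_{\alpha_f}\to+\infty$, uniformly over $f$ by Proposition \ref{prop-BC-prop-12}(d). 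Since $\Phi_f(\cv)=1$ and $1<W_f-1$ once $\alpha_f$ is small enough --- this is where one chooses $r_1'\le r_1$ --- applying Proposition \ref{prop-BC-prop-12}(c) repeatedly gives $f^{\circ j}(\cv)\in\MP_f$ with $\Phi_f(f^{\circ j}(\cv))=1+j$ for $0\le j<W_f-1$; in particular any bounded initial segment of the critical orbit lies on the real axis of the Fatou coordinate, hence well below $\MC_f^\sharp$ (since $\eta_{\alpha_f}\ge 2$) and a definite distance from $0$ --- as is the critical point $\cp_f$, uniformly over the compact family. After fixing $\eta_{\alpha_f}$ large enough I may therefore assume that $\MC_f^\sharp$, together with all of its pull-backs constructed below, lies in a fixed small round disk $D_f$ around $0$ on which $f$ is injective and which misses $\cp_f$ and that bounded initial segment of the critical orbit.

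I would then build the pull-backs inductively. Since $\MC_f^\sharp$ touches $0$, $f(0)=0$ and $f$ is injective on $D_f$, there is a unique component $(\MC_f^\sharp)^{-1}$ of $f^{-1}(\MC_f^\sharp)$ whose closure contains $0$, it again lies in $D_f$, and $f$ maps it isomorphically onto $\MC_f^\sharp$. Iterating (at most $\kc_0$ times), each $(\MC_f^\sharp)^{-k}$ exists, is unique, stays in $D_f$, and $f^{\circ k}\colon(\MC_f^\sharp)^{-k}\to\MC_f^\sharp$ is an isomorphism, because no relevant point of the critical orbit lies in any $(\MC_f^\sharp)^{-j}$. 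For $\MC_f$: as $\cv\in\Int\MC_f$ is the unique critical value of $f$, the component of $f^{-1}(\MC_f)$ containing $\cp_f$ is a two-sheeted cover of $\MC_f$ ramified over $\cv$, and, using that $\MC_f$ and $\MC_f^\sharp$ are adjacent along the segment $\{\im\Phi_f=\eta_{\alpha_f}\}$, I would identify it with the component $\MC_f^{-1}$ meeting $(\MC_f^\sharp)^{-1}$. For $k\ge 2$ one has $\cv\notin\MC_f^{-(k-1)}$ --- its Fatou coordinate is $1$, whereas $\MC_f^{-(k-1)}$ lies near $\cp_f$ when $k=2$ and, after the wrap-around discussed below, near the right edge of the strip otherwise --- so $f\colon\MC_f^{-k}\to\MC_f^{-(k-1)}$ is univalent and hence $f^{\circ k}\colon\MC_f^{-k}\to\MC_f$ is again a degree-two cover ramified only over $\cv$. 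Relative compactness in $U_f$ of all these sets follows from $\overline{\MP_f}\Subset U_f$ together with the covering structure $f=P\circ\varphi^{-1}$ (for $\IS_\alpha$), resp. the classical parabolic picture (for $Q_\alpha$): the relevant branches of $f^{-k}$ carry compact subsets of the image of $f$ back into compact subsets of $U_f$, and Inou-Shishikura's choice of the domain $U$ is precisely tailored so that the sets in play remain inside that image.

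It remains to identify $k_f$ and to bound it uniformly. Following the branch of $f^{-1}$ that terminates at the corner $0$ of $\MP_f$ amounts, in the analytic continuation of the Fatou coordinate, to subtracting $1$ from $\re\Phi_f$ together with the wrap-around that adds the full width $W_f$ each time one turns once around $0$; consequently, once $(\MC_f^\sharp)^{-k}\cup\MC_f^{-k}$ has re-entered $\MP_f$ --- which happens after $\kc+O(1)$ steps --- its real part equals $\lfloor\tfrac{1}{\alpha_f}\rfloor-k+1$ up to a bounded error. Hence the smallest $k$ with $(\MC_f^\sharp)^{-k}\cup\MC_f^{-k}\subset\{\,z\in\MP_f:0<\re\Phi_f(z)<W_f-\tfrac12\,\}$ exists, equals $\kc+O(1)$, and is bounded above by a constant $\kc_0$ independent of $f$ and of $\alpha_f\in(0,r_1']$; all the implied errors, and the admissibility of $r_1'$, are controlled uniformly via the continuity of $f\mapsto(\MP_f,\Phi_f)$ over the compact family $\IS_0\cup\{Q_0\}$ of Proposition \ref{prop-BC-prop-12}(d).

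I expect the main obstacle to be precisely this book-keeping of connected components near the corner $0$: there $\MP_f$ is \emph{not} a neighbourhood of $0$ (a thin ``missing sector'' separates its two bounding arcs) and $\Phi_f$ wraps, so the intermediate pull-backs $(\MC_f^\sharp)^{-k}$ and $\MC_f^{-k}$ temporarily leave $\MP_f$; one must check that the component touching $0$ is well defined and unique, control its real part once it has re-entered $\MP_f$, and verify that $(\MC_f^\sharp)^{-k}$ and $\MC_f^{-k}$ stay adjacent, so that $f^{\circ k}\colon\MC_f^{-k}\to\MC_f$ keeps exactly one ramification point. The second technical ingredient, the uniform relative compactness in $U_f$, rests on Inou-Shishikura's precise covering structure for the $\IS$-maps (and on classical parabolic implosion for $Q_\alpha$); both are, however, standard within this circle of ideas.
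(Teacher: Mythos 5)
The paper does not contain a proof of this proposition: it is cited verbatim from Cheraghi \cite[Proposition 2.7]{Che16}, with the remark immediately after the statement that Buff--Ch\'eritat \cite[Proposition 13]{BC12} already proved the same thing \emph{without} the uniform bound $k_f\le\kc_0$. There is therefore no in-paper argument to compare your sketch against, and I assess it on its own terms.

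The broad strokes are right: pull back $\MC_f\cup\MC_f^\sharp$ through the thin missing sector of $\MP_f$ near $0$, track connected components using the wrap-around of the Fatou coordinate, and use $\Phi_f(\cv)=1$ plus the uniqueness of the critical point to decide where the covering degree is $1$ and where it is $2$. Two points deserve attention. First, opening by ``fixing $\eta_{\alpha_f}$ large enough'' is misdirected: $\eta_\alpha$ is a datum of the statement, pinned down only later in \S\ref{sec-height}, not a parameter you are free to enlarge here; the proposition must hold for every admissible $\eta_\alpha\ge 2$. The useful observation --- which would actually simplify your bookkeeping --- is that the union $\MC_f\cup\MC_f^\sharp$ is independent of $\eta_\alpha$, hence so are $(\MC_f\cup\MC_f^\sharp)^{-k}$ and the integer $k_f$; the isomorphism/degree-$2$ dichotomy in part (a) then just records that $\im\Phi_f(\cv)=0<\eta_\alpha$ forces $\cv\in\Int\MC_f$ and $\cv\notin\MC_f^\sharp$. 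Second, and more substantively, the uniform bound $k_f\le\kc_0$ is precisely the added content of \cite{Che16} over \cite{BC12}, and your closing appeal to ``continuity over the compact family $\IS_0\cup\{Q_0\}$'' is too brief to deliver it: the issue is not finiteness of each $k_f$ but a quantitative bound, uniform as $\alpha_f\to 0$, on the number of pull-back steps it takes the component touching $0$ to traverse the missing sector and re-enter the strip $\{0<\re\Phi_f<\lfloor 1/\alpha_f\rfloor-\kc-\tfrac12\}$. That bound rests on a uniform estimate on the total spiraling of $\partial\MP_f$ about the fixed points $0$ and $\sigma_f$ --- the same estimate the paper invokes for Proposition \ref{prop-unif-inverse}, referring to \cite[Proposition 2.4]{Che16} --- and your sketch neither states nor establishes it. That is the one real gap: without it you have essentially re-derived the weaker \cite{BC12} version, not the uniform statement needed here.
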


The same statement as Proposition \ref{prop-CC-2} without the uniform bound of $k_f$ is proved in \cite[Proposition 13, p.\,713]{BC12}. For the corresponding statements of Propositions \ref{prop-BC-prop-12} and \ref{prop-CC-2} with $\alpha\in\C$ (specifically, when $|\arg\alpha|<\pi/4$ and $|\alpha|$ is small), we refer to \cite[\S 2]{CS15}.

\begin{figure}[!htpb]
  \setlength{\unitlength}{1mm}
  \centering
  \includegraphics[width=0.95\textwidth]{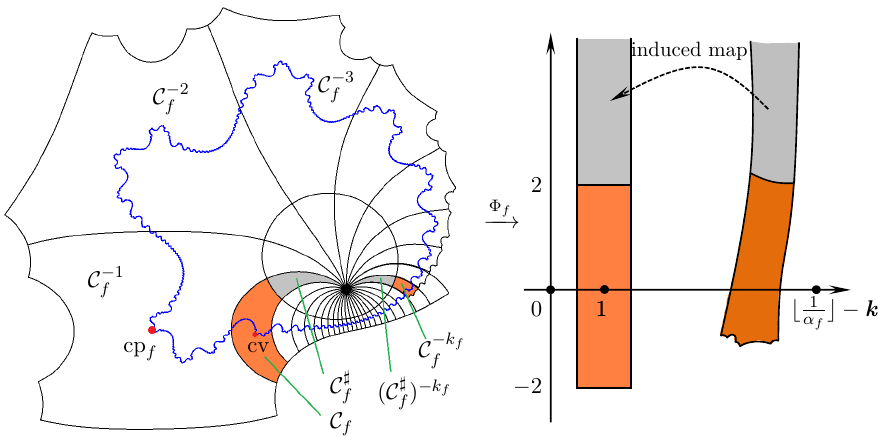}
  \put(-24,9){\small{$\Phi_f(S_f)$}}
  \put(-45,3.5){\small{$\Phi_f(\MC_f\cup\MC_f^\sharp)$}}
  \put(-70,22.5){\footnotesize{$S_f$}}
  \caption{Left: The sets $\MC_f$, $\MC_f^\sharp$ and some of their preimages. The blue set depicts the forward orbit of the critical point $\cp_f$. Right: The images of $\MC_f\cup\MC_f^\sharp$ and $S_f$ under the perturbed Fatou coordinate $\Phi_f$ and it shows how the near-parabolic renormalization map is induced.}
  \label{Fig_near-para-norm-defi}
\end{figure}

\begin{defi}[{Near-parabolic renormalization, see Figure \ref{Fig_near-para-norm-defi}}]
For $f\in\IS_\alpha\cup\{Q_\alpha\}$ with $\alpha\in(0,\varepsilon_1']$, define
\begin{equation}
S_f:=\MC_f^{-k_f}\cup(\MC_f^\sharp)^{-k_f},
\end{equation}
and consider the map
\begin{equation}
\Phi_f\circ f^{\circ k_f}\circ\Phi_f^{-1}:\Phi_f(S_f)\to\C.
\end{equation}
This map commutes with the translation by one. Hence it projects by the \textit{modified} exponential map\footnote{Note that $\Expo(0)=-4/27$ is a critical value of $\MMR f$ and $\Expo(+\infty\ii)=0$. In some literature, the modified exponential map is defined as $\zeta\mapsto -\frac{4}{27} e^{2\pi\ii \zeta}$ so that $(\MMR f)'(0)=e^{-2\pi\ii/\alpha_f}$. In order to apply the classical continued fraction expansion conveniently, in this paper we put a complex conjugacy $s$ in the definition of $\Expo$.}
\begin{equation}\label{equ-modify-exp}
\Expo(\zeta):=-\frac{4}{27} s\,(e^{2\pi\ii \zeta})
\end{equation}
to a well-defined map $\MMR f$ which is defined on a set punctured at zero, where $s:z\mapsto\overline{z}$ is the complex conjugacy. One can check that $\MMR f$ extends across zero and satisfies  $(\MMR f)(0)=0$ and $(\MMR f)'(0)=e^{2\pi\ii/\alpha_f}$. The map $\MMR f$ is called the \emph{near-parabolic renormalization}\footnote{This is the \textit{top} near-parabolic renormalization and the \textit{bottom} near-parabolic renormalization around the fixed point $\sigma_f$ can be defined similarly. See \cite[\S 3]{IS08}.} of $f$.
\end{defi}

Let $P(z)=z(1+z)^2$ be the cubic polynomial introduced at the beginning of \S\ref{subsec-IS-class}. Define
\begin{equation}\label{equ-U-pri}
U':=P^{-1}(\D(0,\tfrac{4}{27}e^{4\pi}))\setminus ((-\infty,-1]\cup \overline{B}),
\end{equation}
where $B$ is the connected component of $P^{-1}(\D(0,\frac{4}{27}e^{-4\pi}))$ containing $-1$. By an explicit calculation, one can prove that $\overline{U}\subset U'$ (see \cite[Proposition 5.2]{IS08} and Figure \ref{Fig_U-zoom}).

\begin{thm}[{\cite[Main Theorem 3]{IS08}}]\label{thm-IS-attr-rep-3}
For every $f=P\circ\varphi^{-1}\in\IS_\alpha$ or $f=Q_\alpha$ with $\alpha\in(0,\varepsilon_1']$, the near-parabolic renormalization $\MMR f$ is well-defined and the restriction of $\MMR f$ in a domain containing $0$ can be written as $P\circ\psi^{-1}\in\IS_{1/\alpha}$. Moreover, $\psi$ extends to a univalent function on $e^{-2\pi\ii/\alpha}\,U'$.
\end{thm}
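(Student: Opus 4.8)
By the definition of the near-parabolic renormalization, $\MMR f$ is already a well-defined holomorphic map on a punctured neighbourhood of $0$ that extends across $0$ with $(\MMR f)'(0)=e^{2\pi\ii/\alpha}$; what remains is to exhibit a domain on which $\MMR f$ coincides with $P\circ\psi^{-1}$ for a univalent map $\psi$, and then to see that this $\psi$ extends univalently to $U'$. The plan, following \cite{IS08}, is to obtain $\psi$ as a lift once the mapping structure of $\MMR f$ has been matched with that of $P$.

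First I would read off the structure of $\MMR f$ from Propositions \ref{prop-BC-prop-12} and \ref{prop-CC-2}. Recall that $\MMR f$ is induced through $\Expo$ by the map $g:=\Phi_f\circ f^{\circ k_f}\circ\Phi_f^{-1}$ on $\Phi_f(S_f)$, and that by Proposition \ref{prop-CC-2} the return map $f^{\circ k_f}$ restricts to a degree-$2$ branched covering $\MC_f^{-k_f}\to\MC_f$ ramified over $\cv$ and to an isomorphism $(\MC_f^\sharp)^{-k_f}\to\MC_f^\sharp$. Since $\MC_f\cup\MC_f^\sharp\subset\{z:1/2\le\re\Phi_f(z)\le 3/2\}$ with $\im\Phi_f>-2$ there, the image $g(\Phi_f(S_f))=\Phi_f(\MC_f\cup\MC_f^\sharp)$ is a vertical strip of width one contained in $\{\im\zeta>-2\}$, so $\Expo$ maps it onto $\D(0,\tfrac{4}{27}e^{4\pi})\setminus\{0\}$. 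Combined with the extension across $0$ and the normalizations $\Phi_f(\cv)=1$, $\Expo(1)=-\tfrac{4}{27}$, this shows that $\MMR f$ takes values in $\D(0,\tfrac{4}{27}e^{4\pi})$, has a single critical value equal to $\cv$, and — transporting the covering data of $f^{\circ k_f}$ through $\Expo$ — has, on a suitable Jordan domain $W$, exactly the same ramification as $P\colon U'\to P(U')$: one simple critical point over $\cv$ and no other branching.

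With the two structures matched, I would construct $\psi$ by lifting the identity of $P(U')$. Since $P(U')\subseteq\MMR f(W)$ and the coverings $P\colon U'\to P(U')$ and $\MMR f\colon W\to\MMR f(W)$ carry the same branch data over $P(U')$, the identity of $P(U')$ lifts through them to a conformal isomorphism $\psi\colon U'\to\psi(U')\subseteq W$ with $\MMR f\circ\psi=P$ on $U'$; the lift is pinned down by requiring $\psi(0)=0$ (compatibly with $P(0)=0=(\MMR f)(0)$), and then $\psi'(0)=e^{-2\pi\ii/\alpha}$ is forced by differentiating $\MMR f\circ\psi=P$ at $0$. These are precisely the normalizations under which $\MMR f|_{\psi(U)}=P\circ(\psi|_U)^{-1}$ belongs to $\IS_{1/\alpha}$ (with $\varphi:=e^{2\pi\ii/\alpha}\psi|_U$ playing the role of the uniformizer in the definition of $\IS_0$), while $\psi$, constructed from the outset on all of $U'$, is the asserted univalent extension.

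The genuine difficulty — and the reason for the specific numerology of the ellipse $E$, of the inclusion $U\Subset U'$, and of the constants $\tfrac{4}{27}e^{\pm4\pi}$ — is the quantitative claim that the domain $W$ on which $\MMR f$ is controlled is large enough for both $\MMR f(W)\supseteq P(U')$ and $\psi(U)\Subset W$ to hold, and that the branch data really agrees over all of $P(U')$. I would establish this first in the parabolic model case $\alpha=0$, where $\Phi_f=\Phi_{attr,f}$ is the genuine attracting Fatou coordinate and all the estimates reduce to explicit computations with the cubic $P$ and with the univalent maps $\varphi$ defining $\IS_0$ — this is the analytic heart of \cite{IS08} and rests on the careful choice of $E$. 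One then transfers to $\alpha\in(0,r_1']$ by perturbation: by Proposition \ref{prop-BC-prop-12}(d) the perturbed petals and Fatou coordinates converge on compact sets to the attracting ones, so by a normal-families argument the inclusions $\MMR f(W)\supseteq P(U')$ and $\psi(U)\Subset W$ and the coincidence of branch data all persist for $\alpha$ of sufficiently high type. The same scheme applies to $f=Q_\alpha$, whose petal and Fatou-coordinate data are furnished by the classical parabolic theory, which yields the statement for every $f\in\IS_\alpha\cup\{Q_\alpha\}$ with $\alpha\in(0,r_1']$. The main obstacle, throughout, is exactly the inclusion ``$\MMR f(W)\supseteq P(U')$ with matching branch data'': it is what forces $r_1'$ (equivalently the threshold $N$) to be taken small.
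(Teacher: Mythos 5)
This theorem is cited verbatim from \cite[Main Theorem 3]{IS08}; the paper does not reprove it, so there is no proof of the paper's to compare against. Your reconstruction does capture the broad architecture of Inou--Shishikura's argument: identify the covering structure of the return map $\Phi_f\circ f^{\circ k_f}\circ\Phi_f^{-1}$ after projection by $\Expo$, match it with that of the cubic $P$ on $U'$, obtain the uniformizer $\psi$ as a lift, establish the parabolic case $\alpha=0$ by explicit estimates tied to the choice of $E$ and of $U\Subset U'$, and propagate to $\alpha\in(0,r_1']$ by the continuity in Proposition~\ref{prop-BC-prop-12}(d).

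That said, two points in the sketch are understated in a way that matters. First, $P$ is cubic with a second critical point at $-1$; the covering $P\colon U'\to P(U')$ is only a proper map of controlled degree \emph{because} $(-\infty,-1]$ and $\overline B$ are excised, and determining that degree and its local/global branch structure is a nontrivial computation. Second, matching ``one simple critical point over $\cv$ and no other branching'' is not by itself sufficient for the lift of the identity to exist: the base $P(U')\setminus\{\cv\}$ is multiply connected (there is also the distinguished point $0$), so one must compare full monodromy representations, not merely branch loci, and then verify the domain inclusions that allow the lifted $\psi$ to be continued univalently to all of $U'$ with $\psi(U)$ compactly inside the natural domain of $\MMR f$. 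You correctly flag these as the ``genuine difficulty,'' and indeed they constitute essentially the whole of \cite{IS08}; the proposal is therefore a reasonable strategic sketch of that paper's proof rather than a self-contained argument, and it should be presented as such rather than as a proof.
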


From Theorem \ref{thm-IS-attr-rep-3} we know that the near-parabolic renormalization of $\MMR f$ can be also defined if the fractional part of $1/\alpha$ is contained in $(0,\varepsilon_1']$. This implies that the near-parabolic renormalization operator $\MMR$ can be applied infinitely many times to $f$ if $\alpha$ is of sufficiently high type.

\subsection{Some sets in the Fatou coordinate planes}\label{subsec-esti-1}

For a set $X$ in $\C$, we use $\Int(X)$ to denote the interior of $X$.
Let $f\in\IS_\alpha\cup\{Q_\alpha\}$ with $\alpha\in(0,\varepsilon_1']$. We define a set in the Fatou coordinate plane of $f$:
\begin{equation}\label{equ-MD-tilde-f}
\widetilde{\MD}_f:=\Int\Big(\Phi_f(\MP_f)\cup\bigcup_{j=0}^{b_f}(\Phi_f(S_f)+j)\Big),
\end{equation}
where $b_f:=k_f+\lfloor\tfrac{1}{\alpha}\rfloor-\kc -2$ is the largest integer\footnote{In particular, from the proof one can see that Lemma \ref{lema:Phi-inverse} will not be true if $b_f$ is chosen as $k_f+\lfloor\tfrac{1}{\alpha}\rfloor-\kc -1$.} such that one can extend $\Phi_f^{-1}:\Phi_f(\MP_f)\to\MP_f$ holomorphically to a domain like $\widetilde{\MD}_f$. See Figure \ref{Fig_chi-map}.

\begin{lem}\label{lema:Phi-inverse}
The map $\Phi_f^{-1}:\Phi_f(\MP_f)\to\MP_f$ can be extended to a holomorphic map
\begin{equation}
\Phi_f^{-1}:\widetilde{\MD}_f\to \MP_f\cup\bigcup_{j=0}^{k_f}f^{\circ j}(S_f),
\end{equation}
such that for all $\zeta\in\C$ with $\zeta,\zeta+1\in\widetilde{\MD}_f$, then $\Phi_f^{-1}(\zeta+1)=f\circ\Phi_f^{-1}(\zeta)$.
\end{lem}

This lemma has been proved in \cite[Lemma 1.8]{AC18}. For completeness and clarifying some ideas we include a sketch of the construction of $\Phi_f^{-1}$ here.

\begin{proof}
By \eqref{defi-C-f-alpha}, the definition of $S_f$, Propositions \ref{prop-BC-prop-12}(b) and \ref{prop-CC-2}(a), we have $f^{\circ k_f}(S_f)=\MC_f\cup\MC_f^\sharp$ and $f^{\circ j}(S_f)$ is well-defined for all $0\leqslant j\leqslant b_f$. If $\zeta\in\widetilde{\MD}_f\setminus\Phi_f(\MP_f)$, then there exists an integer $j\in[1,b_f]$ so that $\zeta\in\Phi_f(S_f)+j$. For such $\zeta$ we define
\begin{equation}
\Phi_f^{-1}(\zeta):=f^{\circ j}(\Phi_f^{-1}(\zeta-j)).
\end{equation}
Note that there may exist two choices\footnote{For example, this happens when $\zeta$ lies on $\big(\Phi_f(S_f)+j\big)\cap \big(\Phi_f(S_f)+j+1\big)$ for $1\leqslant j\leqslant b_f-1$.} of $j$ for some point $\zeta$.
Assume that $\zeta\in\Phi_f(S_f)+j'$ for some $j'\in[1,b_f]$ and $j'\neq j$. Then $|j'-j|=1$. Without loss of generality, we assume that $j'=j+1$. By Proposition \ref{prop-BC-prop-12}(c), we have $\Phi_f^{-1}(\zeta+1)=f\circ\Phi_f^{-1}(\zeta)$ for all $\zeta\in\C$ with $\zeta,\zeta+1\in\Phi_f(\MP_f)$. Thus we have
\begin{equation}
f^{\circ j'}(\Phi_f^{-1}(\zeta-j'))=f^{\circ (j'-1)}(\Phi_f^{-1}(\zeta-j'+1))=f^{\circ j}(\Phi_f^{-1}(\zeta-j)).
\end{equation}
This implies that $\Phi_f^{-1}$ is well-defined in $\widetilde{\MD}_f$ and it is straightforward to check that $\Phi_f^{-1}$ is holomorphic. Finally a completely similar calculation shows that $\Phi_f^{-1}(\zeta+1)=f\circ\Phi_f^{-1}(\zeta)$ for all $\zeta\in\C$ with $\zeta,\zeta+1\in\widetilde{\MD}_f$.
\end{proof}

\begin{figure}[!htpb]
  \setlength{\unitlength}{1mm}
  \centering
  \includegraphics[width=0.45\textwidth]{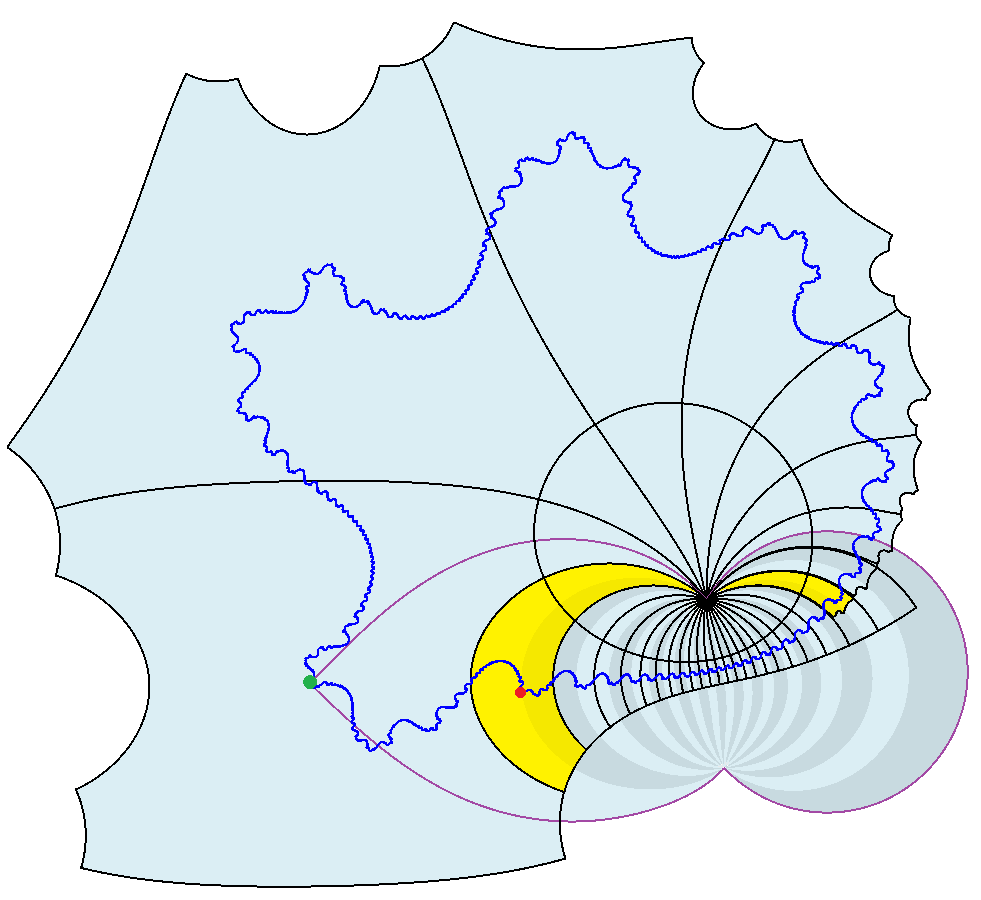}
  \includegraphics[width=0.53\textwidth]{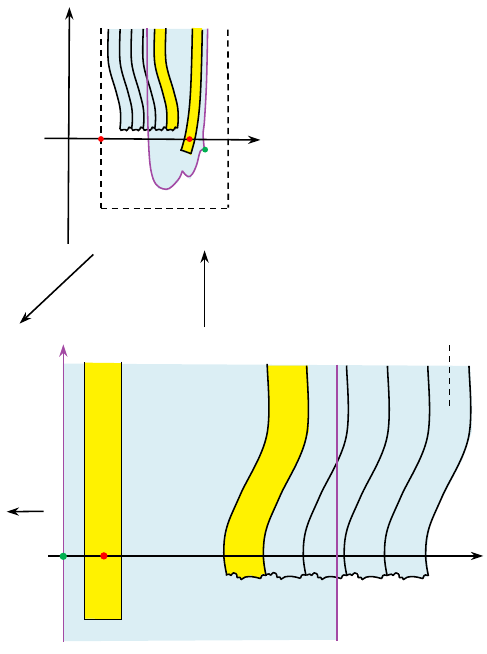}
  \put(-39,6.5){\small{$\Phi_f(S_f)$}}
  \put(-57,1.5){\small{$\Phi_f(\MC_f\cup\MC_f^\sharp)$}}
  \put(-53.7,9.8){\small{$1$}}
  \put(-60.6,10){\small{$0$}}
  \put(-63,4){\small{$-2$}}
  \put(-16,44){\small{$\Phi_f(S_f)+b_f$}}
  \put(-28,3.5){\small{$\lfloor\frac{1}{\alpha_f}\rfloor-\kc$}}
   \put(-66,22.5){$\Phi_f^{-1}$}
   \put(-37.5,50){$\chi_f$}
   \put(-68,51){$\Expo$}
   \put(-43,27){$\widetilde{\MD}_f$}
   \put(-52.5,68){\small{$1$}}
  \put(-60.6,68){\small{$0$}}
  \put(-63,61){\small{$-2$}}
  \put(-34.5,67.5){\small{$\kc_1+2$}}
  \put(-112,11){\small{$\cp_f$}}
  \put(-96.5,9.5){\small{$\cv$}}
  \put(-85,4){\small{$\sigma_f$}}
  \put(-70,6){\small{$\MP_f$}}
  \caption{The inverse $\Phi_f^{-1}$ of the perturbed Fatou coordinate can be extended holomorphically to $\widetilde{\MD}_f$ (colored cyan). It can be seen that the image $\Phi_f^{-1}(\widetilde{\MD}_f)$ wraps around $0$. The holomorphic map $\Phi_f^{-1}$ has an anti-holomorphic lift $\chi_f$ such that $\Expo\circ\chi_f=\Phi_f^{-1}$ (note that $\Expo$ is anti-holomorphic). Some special points are also marked.}
  \label{Fig_chi-map}
\end{figure}

Note that $S_f$ is contained in $\{z\in\MP_f:0<\re\Phi_f(z)<\lfloor\tfrac{1}{\alpha}\rfloor-\kc-\tfrac{1}{2}\}$ and
$f^{\circ b_f}(S_f)=\{z\in\MP_f:\lfloor\tfrac{1}{\alpha}\rfloor-\kc-\tfrac{3}{2}\leqslant\re\Phi_f(z)\leqslant\lfloor\tfrac{1}{\alpha}\rfloor-\kc-\tfrac{1}{2} \text{~and~} \im\Phi_f(z)>-2\}$.
According to Proposition \ref{prop-CC-2}(b), if we consider the local rotation of $f$ near the origin, this implies that
\begin{equation}\label{equ-k-f-kc}
b_f=k_f+\lfloor\tfrac{1}{\alpha}\rfloor-\kc -2\geqslant \lfloor\tfrac{1}{\alpha}\rfloor+1, \text{\quad i.e.,\quad} k_f\geqslant \kc+3.
\end{equation}

The modified exponential map $\Expo:\C\to\C\setminus\{0\}$ defined in \eqref{equ-modify-exp} is an anti-holomorphic covering map. The map $\Phi_f^{-1}:\widetilde{\MD}_f\to\C\setminus\{0\}$ can be lifted to obtain an anti-holomorphic map
\begin{equation}
\chi_f:\widetilde{\MD}_f\to\C
\end{equation}
such that
\begin{equation}
\quad \Expo\circ\chi_f(\zeta)=\Phi_f^{-1}(\zeta), \text{ for all }\zeta\in\widetilde{\MD}_f.
\end{equation}
See Figure \ref{Fig_chi-map}.
There are infinitely many choices of $\chi_f:\widetilde{\MD}_f\to\C$. But the following result holds.

\begin{prop}[{\cite[Proposition 1.9]{AC18}}]\label{prop-unif-inverse}
There exists $\kc_1\in\N^+$ such that for all $f\in\IS_\alpha\cup\{Q_\alpha\}$ with $\alpha\in(0,\varepsilon_1']$ and any choice of the lift $\chi_f$, we have
\begin{equation}
\sup\{|\re(\zeta-\zeta')|:\zeta,\zeta'\in\chi_f(\widetilde{\MD}_f)\}\leqslant \kc_1.
\end{equation}
\end{prop}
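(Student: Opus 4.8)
\noindent\emph{Proof strategy.} The plan is to reduce the statement to a uniform bound on the oscillation of a continuous determination of $\arg\Phi_f^{-1}$ on $\widetilde{\MD}_f$, and then to obtain that bound by decomposing the image $\Phi_f^{-1}(\widetilde{\MD}_f)$ into a uniformly bounded number of geometrically tame pieces.

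First, $\widetilde{\MD}_f$ is connected: by Proposition~\ref{prop-CC-2}(a) one has $S_f\subset\MP_f$, hence $\Phi_f(S_f)\subset\Phi_f(\MP_f)$, and the translates $\Phi_f(S_f)+j$ form a chain of overlapping sets attached to the wide strip $\Phi_f(\MP_f)$. Consequently any two lifts of $\Phi_f^{-1}$ through $\Expo$ differ by one global integer, so the quantity in the statement does not depend on the choice of $\chi_f$. From $\Expo(\zeta)=-\tfrac4{27}\,\overline{e^{2\pi\ii\zeta}}$ one checks that $\vartheta_f(\zeta):=\pi-2\pi\,\re\chi_f(\zeta)$ is a continuous branch of $\arg\Phi_f^{-1}$ on $\widetilde{\MD}_f$; thus $|\re\chi_f(\zeta)-\re\chi_f(\zeta')|=\tfrac1{2\pi}|\vartheta_f(\zeta)-\vartheta_f(\zeta')|$, and it suffices to find a universal constant $C$ bounding the oscillation of $\vartheta_f$ over $\widetilde{\MD}_f$ for every $f\in\IS_\alpha\cup\{Q_\alpha\}$ with $\alpha\in(0,r_1']$; one then takes $\kc_1:=\lceil C/(2\pi)\rceil$.

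By Lemma~\ref{lema:Phi-inverse}, together with $f^{\circ k_f}(S_f)=\MC_f\cup\MC_f^\sharp\subset\MP_f$ and the bound $k_f\le\kc_0$ of Proposition~\ref{prop-CC-2}, the set $\Phi_f^{-1}(\widetilde{\MD}_f)=\MP_f\cup\bigcup_{j=0}^{k_f}f^{\circ j}(S_f)$ is a union of a uniformly bounded number of pieces, each the continuous image of a topological disk (or a bounded union of such), none of which surrounds $0$; so on each of them a continuous branch of $\arg(\,\cdot\,)$ at $0$ is well defined. The key claim is that its oscillation on each piece is bounded by a universal constant. For the pieces that stay a definite distance from $0$ this follows from the compactness of the class $\IS_0\cup\{Q_0\}$ and its limits (normality of the uniformizing maps $\varphi$) together with $k_f\le\kc_0$. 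For the petal $\MP_f$ and the ``fingers'' $(\MC_f^\sharp)^{-m}$, $0\le m\le k_f$, which reach toward $0$, one uses the behaviour of the perturbed Fatou coordinate near the neutral fixed point: in that regime $\Phi_f$ is, up to a logarithmic term, comparable to the Fatou-coordinate normal form $z\mapsto\mathrm{const}/z$ of the limiting parabolic germ, so that there $\arg z=\mathrm{const}-\arg\Phi_f(z)+o(1)$ as $z\to0$; since $\re\Phi_f$ stays in $[\,0,\ \lfloor\tfrac1\alpha\rfloor-\kc\,]$ while $\im\Phi_f$ is large on these pieces, $\arg\Phi_f(z)$ varies only within a subinterval of $(0,\tfrac\pi2]$, \emph{independently of how large $\lfloor\tfrac1\alpha\rfloor$ is}, whence the bound. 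Finally, since $\widetilde{\MD}_f$ is connected, $\vartheta_f$ is obtained by patching these finitely many local branches along their overlaps, so its total oscillation is at most the number of pieces times the per-piece bound; this gives $C$, hence $\kc_1$.

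The main obstacle is the uniformity of the estimate for the pieces touching $0$. Proposition~\ref{prop-BC-prop-12}(d) only asserts that $\Phi_{f_n}$, suitably shifted, converges to the parabolic Fatou coordinates as $\alpha_{f_n}\to0$; promoting this to a comparison of $\Phi_f$ with the parabolic normal form that is valid simultaneously for every $f$ in the invariant class and every small $\alpha$ — in particular the uniform choice of the scale on which the comparison holds, the uniform control of the additive ``centre'' of the petal in Fatou coordinates, and the fact that $\MP_f$ and the fingers $(\MC_f^\sharp)^{-m}$ wind only a bounded number of times about $0$ — is the quantitative heart of the matter. This is exactly what is carried out in \cite[Proposition~1.9]{AC14}, by means of the explicit parabolic-implosion estimates of \cite{Shi00,BC12,Che13,Che16}.
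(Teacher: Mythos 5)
Your overall strategy matches the paper's: reduce the oscillation of $\re\chi_f$ to a uniform bound on the oscillation of a continuous branch of $\arg\Phi_f^{-1}$ on $\widetilde{\MD}_f$, decompose $\Phi_f^{-1}(\widetilde{\MD}_f)$ into $\MP_f$ and the $k_f+1 \le \kc_0+1$ pieces $f^{\circ j}(S_f)$ via Lemma \ref{lema:Phi-inverse} and Proposition \ref{prop-CC-2}, handle the pieces away from $0$ by pre-compactness, and handle the pieces near $0$ by a winding bound. The paper (citing \cite{AC14}) attributes the proof to exactly these three ingredients, with the ``near $0$'' part being a uniform bound on the total spiral of $\MP_f$ about the origin.

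However, the quantitative heart of your sketch --- the estimate on the pieces touching $0$ --- is based on a wrong normal form. You invoke the \emph{parabolic} comparison $\Phi_f(z)\approx \mathrm{const}/z$, hence $\arg z\approx \mathrm{const}-\arg\Phi_f(z)+o(1)$, and conclude that the oscillation of $\arg z$ is at most $\pi/2$ because $\arg\Phi_f(z)$ stays in $(0,\pi/2]$. This is not uniformly valid in the near-parabolic regime. At scales $|z|\ll|\sigma_f|\asymp\alpha$ the correct relation, via $\tau_f(w)=\sigma_f/(1-e^{-2\pi\ii\alpha w})$ and the estimate $L_f^{-1}(\zeta)=\zeta+O(\log)$, is $\Phi_f(z)\approx\tau_f^{-1}(z)$, which gives $\re\Phi_f(z)\approx\tfrac{1}{2\pi\alpha}(\arg z-\mathrm{const})$ and $\im\Phi_f(z)\approx\tfrac{1}{2\pi\alpha}\log|\sigma_f/z|$. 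Consequently $\arg\Phi_f(z)\to\pi/2$ as $z\to 0$ \emph{regardless} of $\arg z$, so your claimed relation $\arg z=\mathrm{const}-\arg\Phi_f(z)+o(1)$ fails outright; and as $\re\Phi_f$ sweeps $[0,\lfloor\tfrac1\alpha\rfloor-\kc\,]$ the argument $\arg z$ varies by roughly $2\pi$ (the petal wraps once about the origin between its two fixed-point ends), not by at most $\pi/2$. The bound you need is precisely the ``uniform bound on the total spiral of $\MP_f$ about the origin'' from \cite[Prop.~12]{BC12} and \cite[Prop.~2.4]{Che16}, which is a genuine geometric estimate; it does not follow from the parabolic normal form as you suggest. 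A further gap: for $1\le m\le k_f-1$ the fingers $(\MC_f^\sharp)^{-m}$ and $\MC_f^{-m}$ generally leave $\MP_f$ (they pass ``around the back'' between the two fixed points), so $\Phi_f$ is not even defined on them and your $\arg\Phi_f$-based argument has no meaning there; one has to control the winding of these pieces by tracking the dynamics of $f$ directly, again using pre-compactness and the structure from Proposition \ref{prop-CC-2}.
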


Proposition \ref{prop-unif-inverse} was proved by applying Proposition \ref{prop-CC-2}, the pre-compactness of the class $\IS_\alpha$ and a uniform bound on the total spiral of the set $\MP_f$ about the origin (see \cite[Proposition 12]{BC12} or \cite[Proposition 2.4]{Che19}).

\medskip
From \cite[\S 5.A]{IS08} or \cite[Propositions 2.6 and 2.7]{CS15} (the top and bottom near-parabolic renormalizations can be defined for all $f\in\IS_\alpha\cup\{Q_\alpha\}$ with $\alpha\in(0,\varepsilon_1']$), $\MP_f$ is contained in the image of $f$. By Lemma \ref{lema:Phi-inverse}, we have $\Phi_f^{-1}(\widetilde{\MD}_f)\subset f(U_f)$. Since $f(U_f)\subset P(U')=\D(0,\tfrac{4}{27}e^{4\pi})$, we have $\im\zeta>-2$ for every $\zeta\in\chi_f(\widetilde{\MD}_f)$, where $P(z)=z(1+z)^2$ and $U'$ is defined in \eqref{equ-U-pri}. Therefore, by Proposition \ref{prop-unif-inverse}, there exists a choice of $\chi_f$, denoted by $\chi_{f,0}$ such that
\begin{equation}\label{equ-chi-choice}
\chi_{f,0}(\widetilde{\MD}_f)\subset \{\zeta\in\C:1\leqslant\re\zeta<\kc_1+2 \text{ and }\im\zeta>-2\}.
\end{equation}
We define
\begin{equation}\label{equ-MD-f}
\MD_f:=\Int\Big(\Phi_f(\MP_f)\cup\bigcup_{j=0}^{k_f+\kc_0+\kc_1+2}(\Phi_f(S_f)+j)\Big),
\end{equation}
where $\kc_0$, $\kc_1\in\N^+$ are integers introduced in Propositions \ref{prop-CC-2} and \ref{prop-unif-inverse} respectively. Let $\kc\in\N^+$ be the integer introduced in Proposition \ref{prop-BC-prop-12}.

\begin{lem}\label{lema:D-n}
For all $f\in\IS_\alpha\cup\{Q_\alpha\}$ with $0<\alpha\leqslant \widetilde{\varepsilon}_1:=\min\{\varepsilon_1',1/(\kc+\kc_0+\kc_1+4)\}$, we have $\MD_f\subset\widetilde{\MD}_f$. Moreover,
\begin{align}
\MD_f &\subset\Phi_f(\MP_f)\cup\{\zeta\in\C:0\leqslant\re\zeta-(\lfloor\tfrac{1}{\alpha}\rfloor-\kc)<2\kc_0+\kc_1+\tfrac{3}{2}\}\text{ and }\label{equ-D-n-loc} \\
\MD_f &\supset\Phi_f(\MP_f)\cup\{\zeta\in\C:0\leqslant\re\zeta-(\lfloor\tfrac{1}{\alpha}\rfloor-\kc)\leqslant\kc_0+\kc_1+3 \text{ and }\im\zeta\geqslant 0\}.
\end{align}
\end{lem}

\begin{proof}
The condition on $\alpha$ implies that $k_f+\kc_0+\kc_1+2\leqslant k_f+\lfloor\tfrac{1}{\alpha}\rfloor-\kc -2$. Then we have $\MD_f\subset\widetilde{\MD}_f$ by definition.

Since $\Phi_f(S_f)\subset\{\zeta\in\C:0<\re\zeta<\lfloor\tfrac{1}{\alpha}\rfloor-\kc-\tfrac{1}{2}\}$ by Proposition \ref{prop-CC-2}(b), for $\zeta\in\MD_f$ we have $\re\zeta<\lfloor\tfrac{1}{\alpha}\rfloor+k_f+\kc_0+\kc_1-\kc+\tfrac{3}{2}\leqslant \lfloor\tfrac{1}{\alpha}\rfloor+2\kc_0+\kc_1-\kc+\tfrac{3}{2}$. Hence \eqref{equ-D-n-loc} holds.

By \eqref{ellipse} and \eqref{U-and-psi-1}, we have $U\supset\D(0,\tfrac{8}{9})$ (see also \cite[Lemma 6.1]{Che19}). For any $f\in\IS_\alpha\cup\{Q_\alpha\}$, by Koebe's $\tfrac{1}{4}$-theorem we have $U_f\supset\D(0,\tfrac{2}{9})$. Since $\Expo(\Phi_f(S_f))\supset U_{\MMR f}\setminus\{0\}$ and $\MMR f\in\IS_{1/\alpha}$, we have $\D(0,\tfrac{2}{9})\subset\Expo(\Phi_f(S_f))$. Since $f^{\circ k_f}(S_f)=\MC_f\cup\MC_f^\sharp\subset\MP_f$, we have $\re\zeta>\lfloor\tfrac{1}{\alpha}\rfloor-\kc$ for all $\zeta\in\Phi_f(S_f)+k_f$. This implies that $\{\zeta\in\C:-\tfrac{3}{2}\leqslant\re\zeta-(\lfloor\tfrac{1}{\alpha}\rfloor-\kc)\leqslant 1 \text{ and }\im\zeta>-\tfrac{1}{2\pi}\log\tfrac{3}{2}\}$ is contained in the interior of $\bigcup_{j=0}^{k_f}(\Phi_f(S_f)+j)$. Therefore, $\MD_f\setminus\Phi_f(\MP_f)$ contains
$\{\zeta\in\C:0\leqslant\re\zeta-(\lfloor\tfrac{1}{\alpha}\rfloor-\kc)\leqslant \kc_0+\kc_1+3 \text{ and }\im\zeta\geqslant 0\}$.
\end{proof}

\subsection{Some quantitative estimates}\label{subsec-esti-2}

Let $\sigma_f\neq0$ be another fixed point of $f\in\IS_\alpha\cup\{Q_\alpha\}$ near $0$ which is contained in $\partial \MP_f$ for small $\alpha>0$ (see Figure \ref{Fig_perturbed-Fatou-coor}). It depends continuously on $f$ and has asymptotic expansion
\begin{equation}\label{equ-sigma-f}
\sigma_f=-4\pi\alpha\ii/f_0''(0)+o(\alpha)
\end{equation}
as $f\to f_0\in\IS_0\cup\{Q_0\}$ in a fixed neighborhood of $0$ (see \cite[\S 3.2.1]{Shi00a}). By \cite[Main Theorem 1(a)]{IS08}, $|f_0''(0)|$ is contained in $[3,7]$ for all $f_0\in\IS_0$. By the pre-compactness of $\IS_0$, there exists a constant $D_0'>1$ such that for all $f\in\IS_\alpha\cup\{Q_\alpha\}$ with $\alpha\in(0,\varepsilon_1]$, one has
\begin{equation}\label{equ-range-sigma-f}
\alpha/D_0'\leqslant |\sigma_f|\leqslant D_0' \alpha.
\end{equation}
For a general statement of \eqref{equ-range-sigma-f} (i.e., $\alpha\in\C$), see \cite[Lemma 3.25(1)]{CS15}.

Let
\begin{equation}\label{equ-tau}
\tau_f(w):=\frac{\sigma_f}{1-e^{-2\pi\ii\alpha w}}
\end{equation}
be a universal covering from $\C$ to $\EC\setminus\{0,\sigma_f\}$ with period $1/\alpha$. Then $\tau_f(w)\to 0$ as $\im w\to +\infty$ and $\tau_f(w)\to \sigma_f$ as $\im w\to -\infty$. There exists a unique lift $F_f$ of $f$ under $\tau_f$ such that
\begin{equation}
f\circ \tau_f(w)=\tau_f\circ F_f(w) \text{\quad with\quad} \lim_{\im w\to+\infty}(F_f(w)-w)=1.
\end{equation}
The set $\tau_f^{-1}(\MP_f)$ consists of countably many simply connected components. Each of them is bounded by piecewise analytic curves going from $-\infty\ii$ to $+\infty\ii$. Let $\widetilde{\MP}_f$ be the unique component separating $0$ from $1/\alpha$. Define
\begin{equation}\label{equ-L-f}
L_f:=\Phi_f\circ \tau_f:\widetilde{\MP}_f\to\C.
\end{equation}
Then $L_f$ is univalent and it is the Fatou coordinate of $F_f$ since $L_f( F_f(w))=L_f(w)+1$ if both $w$ and $F_f(w)$ are contained in $\widetilde{\MP}_f$.

\medskip
For $\alpha\in(0,\widetilde{\varepsilon}_1]$ and $R\in(0,+\infty)$, we define
\begin{equation}\label{equ-Theta-alpha}
\Theta_\alpha(R):=\C\setminus\bigcup_{n\in\Z}\,\D(n/\alpha,R).
\end{equation}
For $C>0$, we denote $a_C:=C e^{5\pi\ii/12}$ and define a piecewise analytic curve
\begin{equation}
\begin{split}
\ell_C:=\{w\in\C:\arg(w-a_C)=\tfrac{11}{12}\pi\}
& \cup \{w\in\C:\arg(w-\overline{a}_C)=-\tfrac{11}{12}\pi\} \\
& \cup \{C e^{\ii\theta}:\theta\in[-\tfrac{5\pi}{12},\tfrac{5\pi}{12}]\}.
\end{split}
\end{equation}
Then $\ell_C\cup(-\ell_C+1/\alpha)$ divides $\C$ into three connected components\footnote{We always assume that $\alpha$ is small such that $\Theta_\alpha(C)$ is connected and hence $1/(2\alpha)\in\Theta_\alpha(C)$.}. Let $A_1(C)$ be the component of $\C\setminus(\ell_C\cup(-\ell_C+1/\alpha))$ containing $1/(2\alpha)$.
The following result is a summary of Lemmas 6.4, 6.7(2), 6.6 and 6.11 in \cite{Che19}.

\begin{lem}\label{lema-key-pre-Che}
There are constants $\varepsilon_2\in(0,\widetilde{\varepsilon}_1]$, $C_0$, $C_0'>0$ and $C_0''\geqslant 6$ such that for all $f\in\IS_\alpha\cup\{Q_\alpha\}$ with $\alpha\in(0,\varepsilon_2]$, we have
\begin{enumerate}
\item $F_f$ is defined and univalent in $\Theta_\alpha(C_0')$, and for all $r\in(0,1/2]$ and all $w\in\Theta_\alpha(r/\alpha)\cap\Theta_\alpha(C_0')$, then
\begin{equation}
|F_f(w)-(w+1)|,\,|F_f'(w)-1|<\min\Big\{\frac{1}{4},C_0\frac{\alpha}{r}e^{-2\pi\alpha\im w}\Big\};
\end{equation}
\item For all\,\footnote{In \cite[Lemma 6.7(2)]{Che19}, $R$ is contained in $[3.25,1/(2\alpha)]$. In fact the estimate of $|L_f'(w)|$ there still holds if $R\in [3.25,C/\alpha]$ for every $C\geqslant 1/2$ (the only difference is that the constants in the estimate need to be modified).} $R\in[C_0'',2/\alpha]$ and all $w$ with $\D(w,R)\subset A_1:=A_1(C_0')$ and $\im w\geqslant -1/\alpha$, then
\begin{equation}
\frac{1}{|L_f'(w)|} \leqslant 1+\frac{C_0}{R};
\end{equation}
\item $L_f:\widetilde{\MP}_f\to\C$ has a unique univalent extension onto $\widetilde{\MP}_f \cup A_1$ such that $L_f( F_f(w))=L_f(w)+1$ if both $w$ and $F_f(w)$ belong to $\widetilde{\MP}_f \cup A_1$;
\item For any $r>0$ there is $K_r\geqslant 1$ depending only on $r$ such that\,\footnote{By Lemma \ref{lema-key-pre-Che}(c), the number $x_f$ defined in \cite[Equation (50)]{Che19} satisfies $x_f\geqslant \lfloor\tfrac{1}{\alpha}\rfloor-\kc$. Hence by \cite[Lemma 6.11]{Che19} this part holds for all $\zeta\in\Phi_f(\MP_f)\setminus\D(0,r)$.}
\begin{equation}
K_r^{-1}\leqslant |(L_f^{-1})'(\zeta)|\leqslant K_r \text{ for all } \zeta\in\Phi_f(\MP_f)\setminus\D(0,r).
\end{equation}
\end{enumerate}
\end{lem}

The following Lemma \ref{lema-Cheraghi-L-f-add} and Proposition \ref{prop-Cheraghi-L-f} are useful in the estimates of the locations of the points under $\Phi_f^{-1}$ and $\chi_f$.

\begin{lem}\label{lema-Cheraghi-L-f-add}
There exists a constant $D_0>0$ such that for any $D_1'>0$, there exists $D_1>0$ such that for all $f\in\IS_\alpha\cup\{Q_\alpha\}$ with $\alpha\in(0,\varepsilon_2]$, we have
\begin{enumerate}
\item $D_0\leqslant |L_f^{-1}(\zeta)|\leqslant D_1$ for $\zeta\in\Phi_f(\MP_f)\cap\overline{\D}(0,D_1')$; and
\item $D_0\leqslant |L_f^{-1}(\zeta)-1/\alpha|\leqslant D_1$ for $\zeta\in\Phi_f(\MP_f)\cap\overline{\D}(1/\alpha,D_1')$.
\end{enumerate}
\end{lem}

\begin{proof}
By the continuous dependence of the Fatou coordinates of the maps in $\IS_0$, the pre-compactness of $\IS_0$ and note that $\MP_f$ is compactly contained in the domain of definition of $f$, there exists a constant $R_1>0$ such that \begin{equation}
\MP_f\subset\D(0,R_1) \text{ for all } f\in\IS_\alpha\cup\{Q_\alpha\} \text{ with } \alpha\in(0,\varepsilon_2].
\end{equation}
By \eqref{equ-range-sigma-f} and the formula of $\tau_f$ in \eqref{equ-tau}, a direct calculation shows that there exists a constant $D_0>0$ such that the Euclidean distance satisfies $\dist(L_f^{-1}(\zeta),\Z/\alpha)\geqslant D_0$ for all $f\in\IS_\alpha\cup\{Q_\alpha\}$ with $\alpha\in(0,\varepsilon_2]$ and all $\zeta\in\Phi_f(\MP_f)$.

\medskip
By Lemma \ref{lema-key-pre-Che}(d), there exists a constant $K_1>1$ such that
\begin{equation}\label{equ-bound-L-f}
K_1^{-1}\leqslant |(L_f^{-1})'(\zeta)|\leqslant K_1
\end{equation}
for all $f\in\IS_\alpha\cup\{Q_\alpha\}$ with $\alpha\in(0,\varepsilon_2]$ and all $\zeta\in\Phi_f(\MP_f)\setminus\D$. From \cite[Proposition 6.17]{Che19}, there exists a constant $C_1>0$ such that for all $f\in\IS_\alpha\cup\{Q_\alpha\}$ with $\alpha\in(0,\varepsilon_2]$ we have
\begin{equation}\label{equ-bound-pt}
|L_f^{-1}(\tfrac{3}{2})|<C_1.
\end{equation}

Without loss of generality we assume that $D_1'>1$. Combining \eqref{equ-bound-L-f} and \eqref{equ-bound-pt}, there exists a constant $C_2>0$ depending only on $K_1$, $C_1$ and $D_1'$ such that $|L_f^{-1}(\zeta)|<C_2$ for all $f\in\IS_\alpha\cup\{Q_\alpha\}$ with $\alpha\in(0,\varepsilon_2]$ and all $\zeta\in(\Phi_f(\MP_f)\cap\overline{\D}(0,D_1'))\setminus\D$. On the other hand, by Lemma \ref{lema-key-pre-Che}(a) and applying
\begin{equation}
L_f^{-1}(\zeta)=F_f^{-1}\circ L_f^{-1}(\zeta+1),
\end{equation}
there exists a constant $C_3>0$ such that $|L_f^{-1}(\zeta)|<C_3$ for all $f\in\IS_\alpha\cup\{Q_\alpha\}$ with $\alpha\in(0,\varepsilon_2]$ and all $\zeta\in\Phi_f(\MP_f)\cap\D$.

By Lemma \ref{lema-key-pre-Che}(d) and \cite[Proposition 6.16]{Che19}, there exists a constant $C_4>0$ depending on $D_1'$ such that $|L_f^{-1}(\zeta)-1/\alpha|\leqslant C_4$ for all $f\in\IS_\alpha\cup\{Q_\alpha\}$ with $\alpha\in(0,\varepsilon_2]$ and all $\zeta\in\Phi_f(\MP_f)\cap\overline{\D}(1/\alpha,D_1')$. Then the proof is complete if we set $D_1:=\max\{C_2,C_3,C_4\}$.
\end{proof}

\begin{prop}[{\cite[Propositions 6.19 and 6.17]{Che19}}]\label{prop-Cheraghi-L-f}
There are constants $\varepsilon_2'\in(0,\varepsilon_2]$ and $D_2>0$ such that for all $f\in\IS_\alpha\cup\{Q_\alpha\}$ with $\alpha\in(0,\varepsilon_2']$, we have
\begin{enumerate}
\item If $\zeta\in[0,\lfloor\tfrac{1}{\alpha}\rfloor-\kc\,]+\ii\,[-3,+\infty)$, then
\begin{equation}
|L_f^{-1}(\zeta)-\zeta|\leqslant D_2 \log(1+1/\alpha).
\end{equation}
\item If $\zeta\in [0,\lfloor\tfrac{1}{\alpha}\rfloor-\kc\,]+\ii\,[-3,1/\alpha]$, then
\begin{equation}
|L_f^{-1}(\zeta)-\zeta|\leqslant D_2 \min\{\log(2+|\zeta|),\,\log(2+|\zeta-1/\alpha|)\}.
\end{equation}
\end{enumerate}
\end{prop}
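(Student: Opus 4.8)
The plan is to deduce both estimates from the functional equation $L_f^{-1}(\zeta+1)=F_f(L_f^{-1}(\zeta))$ of Lemma \ref{lema-key-pre-Che}(c) together with the drift bound
\begin{equation*}
|F_f(w)-(w+1)|\leq C_0\,\frac{\alpha}{r}\,e^{-2\pi\alpha\im w}\qquad\bigl(\dist(w,\tfrac1\alpha\Z)\geq\tfrac r\alpha\bigr)
\end{equation*}
of Lemma \ref{lema-key-pre-Che}(a): the first converts the estimate on $L_f^{-1}(\zeta)-\zeta$ into the problem of how far an $F_f$-orbit drifts from the unit translation, and the second controls that drift.

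Concretely, I would write $\zeta=\zeta_0+n$ with $\re\zeta_0\in[0,1)$ and $n=\lfloor\re\zeta\rfloor$, so that iterating the functional equation gives $L_f^{-1}(\zeta)=F_f^{\circ n}(w_0)$ with $w_0:=L_f^{-1}(\zeta_0)$ and
\begin{equation*}
L_f^{-1}(\zeta)-\zeta=\bigl(w_0-\zeta_0\bigr)+\sum_{j=0}^{n-1}\bigl(w_{j+1}-w_j-1\bigr),\qquad w_j:=F_f^{\circ j}(w_0).
\end{equation*}
An induction on $j$ --- keeping the orbit inside $\widetilde{\MP}_f\cup A_1$ where Lemma \ref{lema-key-pre-Che} applies, $\re w_j$ within $O(\log(2+j))$ of $j$, and $\im w_j$ above $\im\zeta-C\log(1+\tfrac1\alpha)$ --- yields $\dist(w_j,\tfrac1\alpha\Z)\gtrsim\min\{j,\tfrac1\alpha-j\}$, so Lemma \ref{lema-key-pre-Che}(a) with $r\asymp\alpha\min\{j,\tfrac1\alpha-j\}$ and $e^{-2\pi\alpha\im w_j}=O(1)$ (here $\im\zeta\ge-3$) gives the harmonic-type bound
\begin{equation*}
\sum_{j=0}^{n-1}|w_{j+1}-w_j-1|\;\lesssim\;\sum_{j=1}^{n}\frac1j\;\lesssim\;\log(2+n)\;\lesssim\;\log\bigl(1+\tfrac1\alpha\bigr).
\end{equation*}
For (b), where $\im\zeta\le1/\alpha$, iterating instead from whichever of the ends $0,\,1/\alpha$ is closer to $\zeta$ (the ``backward'' iteration near $1/\alpha$ using $\tau_f(w)\to\sigma_f$ as $\im w\to-\infty$) makes $n$ comparable to $\min\{|\zeta|,|\zeta-1/\alpha|\}$, producing the refined bound. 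This reduces matters to controlling the single remaining term $w_0-\zeta_0=L_f^{-1}(\zeta_0)-\zeta_0$ with $\re\zeta_0=O(1)$: for $\im\zeta_0$ bounded it is Lemma \ref{lema-Cheraghi-L-f-add}, and for $\im\zeta_0$ large one needs $|L_f^{-1}(\zeta_0)-\zeta_0|=O(\log(1+\tfrac1\alpha))$, even $O(\log(2+|\zeta_0|))$ for (b).

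I expect this last ``parabolic-end'' estimate to be the main obstacle: bounding $L_f^{-1}(\zeta)-\zeta$ near the top of the strip, uniformly as $\im\zeta\to+\infty$, cannot be reached by the $F_f$-iteration and lies outside the bounded window of Lemma \ref{lema-Cheraghi-L-f-add}. There I would work directly with the explicit formula $\tau_f(w)=\sigma_f/(1-e^{-2\pi\ii\alpha w})$ and the near-parabolic asymptotics of $\Phi_f$ at the origin, or, equivalently, integrate the difference $(L_f^{-1})'-1$ along a vertical ray, controlling it by the bound $1/|L_f'(w)|\le 1+C_0/R$ of Lemma \ref{lema-key-pre-Che}(b) --- promoted from a modulus bound to a genuine ``$-1$'' estimate by differentiating the functional equation and Cauchy-estimating the second derivative. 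This, together with the somewhat circular bookkeeping in the induction above (the drift bound needs the orbit to avoid $\tfrac1\alpha\Z$ and not to sink in imaginary part, facts which themselves rest on the drift bound), is where the delicate part of \cite[\S6]{Che16} enters.
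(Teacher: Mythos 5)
The paper does not prove this proposition but cites \cite[Propositions 6.19 and 6.17]{Che16}, so your proposal is in effect a reconstruction of Cheraghi's argument, and its framework is indeed the right one: write $L_f^{-1}(\zeta)-\zeta$ as a telescoping sum of drifts $F_f(w_j)-w_j-1$ along an $F_f$-orbit, bound each term by Lemma \ref{lema-key-pre-Che}(a) with $r\asymp\alpha\min\{j,1/\alpha-j\}$, and sum the harmonic-type series; for (b), anchoring the iteration at the nearer of the endpoints $0$ and $1/\alpha$ via Lemma \ref{lema-Cheraghi-L-f-add} sharpens the count to $\log(2+\min\{|\zeta|,|\zeta-1/\alpha|\})$. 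This is exactly the shape of the argument in \cite[\S 6]{Che16}.

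The two difficulties you flag are genuine, however, and the first is where the substance lies. For the parabolic-end piece $w_0-\zeta_0$ with $\im\zeta_0\to+\infty$ in (a), integrating $(L_f^{-1})'-1$ vertically and invoking only the $K_0/\im w$ bound of Lemma \ref{lema-Che-L-f} fails: for $\im\zeta\gtrsim 1/\alpha$ one has $\im L_f^{-1}(\zeta)\asymp\im\zeta$ (compare Lemma \ref{lema-key-estimate-lp}(a) with the expansion of $\tau_f$), so the integrand is $\asymp 1/t$ and $\int_{1/\alpha}^{\infty}dt/t$ diverges. One needs the sharper decay $|L_f'(w)-1|\lesssim\alpha\,e^{-2\pi\alpha\im w}$ once $\im w\gtrsim 1/\alpha$, which is not among the lemmas quoted in the paper and must be extracted from the proof of Lemma \ref{lema-key-pre-Che}(a) (e.g.\ by the derivative-of-the-functional-equation route you sketch); with that decay the vertical integral contributes $O(1)$ and the claimed $D_2\log(1+1/\alpha)$ comes from the horizontal iteration together with the segment $-3\leq\im\zeta\leq 1/\alpha$. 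The circularity in the bookkeeping is more routine: it is handled by a continuity (open-and-closed) argument launched from $\zeta=3/2$, where Lemma \ref{lema-Cheraghi-L-f-add} gives a direct bound, but it still has to be written down. So the proposal is a correct road-map of the argument in \cite{Che16} rather than a self-contained proof, and the missing content sits precisely where you pointed.
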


Proposition \ref{prop-Cheraghi-L-f}(a) was proved in \cite[Proposition 6.19]{Che19} (see also \cite[Proposition 6.15]{Che19}). The statement (b) was proved in \cite[Proposition 6.17]{Che19} for $\zeta\in[0,\lfloor\tfrac{1}{\alpha}\rfloor-\kc\,]$ (i.e., $\zeta\in\R$). However, the arguments there can be applied to  $\zeta\in[0,\lfloor\tfrac{1}{\alpha}\rfloor-\kc\,]+ \ii\,[-3,1/\alpha]$ completely similarly by using \cite[Lemma 6.7]{Che19} and Lemma \ref{lema-Cheraghi-L-f-add}.
For more details on the study of $L_f$ and $L_f^{-1}$, see \cite[\S\S 6.3-6.6]{Che19} and \cite[\S 3.5]{CS15}.

\medskip
Let $X\geqslant 0$ and $Y\geqslant 0$ be two numbers. We use $X\asymp Y$ to denote that $X$ and $Y$ are in the same order, i.e., there exist two universal positive constants $C_1$ and $C_2$ such that $C_1Y\leqslant X\leqslant C_2 Y$. Let $\MD_f$ be the set defined in \eqref{equ-MD-f}.

\begin{lem}\label{lema-key-estimate-lp}
There exist constants $\varepsilon_3\in(0,\varepsilon_2']$ and $D_3>0$ such that for all $f\in\IS_\alpha\cup\{Q_\alpha\}$ with $\alpha\in(0,\varepsilon_3]$, we have
\begin{enumerate}
\item If $\zeta\in\MD_f$ with $\im\zeta\geqslant 1/\alpha$, then
\begin{equation}
|\Phi_f^{-1}(\zeta)| \asymp \frac{\alpha}{e^{2\pi\alpha\im\zeta}} \text{\quad and\quad}
\Big|\im \chi_f(\zeta)-\Big(\alpha\,\im\zeta+\frac{1}{2\pi}\log\frac{1}{\alpha}\Big)\Big|\leqslant D_3.
\end{equation}
\item If $\zeta\in\MD_f$ with $\im \zeta\in[-3,1/\alpha]$, then
\begin{equation}
|\Phi_f^{-1}(\zeta)|\asymp~ \max\Big\{\frac{1}{1+|\zeta|},\frac{1}{1+|\zeta-1/\alpha|}\Big\} \text{\quad and}
\end{equation}
\begin{equation}
\big|\im \chi_f(\zeta)-\tfrac{1}{2\pi}\min\big\{\log(1+|\zeta|),\log(1+|\zeta-1/\alpha|)\big\}\big|\leqslant D_3.
\end{equation}
\end{enumerate}
\end{lem}

\begin{proof}
By the definition of $\Phi_f^{-1}$ in Lemma \ref{lema:Phi-inverse}, if $\zeta\in\MD_f\setminus\Phi_f(\MP_f)$, then there exists a positive integer $j\in[1,k_f+\kc_0+\kc_1+2]$ such that $\zeta-j\in\Phi_f(\MP_f)$ and $\Phi_f^{-1}(\zeta)=f^{\circ j}(\Phi_f^{-1}(\zeta-j))$. By the pre-compactness of $\IS_\alpha$, it is sufficient to prove the statements in this lemma for $\zeta\in\Phi_f(\MP_f)$.

\medskip
(a) By Proposition \ref{prop-Cheraghi-L-f}(a), we have
\begin{equation}
\im\zeta-D_2\log(1+1/\alpha)\leqslant \im L_f^{-1}(\zeta) \leqslant \im\zeta+D_2\log(1+1/\alpha).
\end{equation}
If $\alpha$ is small, then $\alpha\log(1+1/\alpha)$ is also. Suppose $\zeta\in\MD_f$ with $\im\zeta\geqslant 1/\alpha$. Decreasing $\alpha$ if necessary, we assume that $\im \zeta-D_2\log(1+1/\alpha)>1/(2\alpha)$. Denote $w:=L_f^{-1}(\zeta)$. Then $|e^{-2\pi\ii\alpha w}|=|e^{2\pi\alpha \im w}\cdot e^{-2\pi\ii\alpha \re w}|>e^\pi$. Note that $\alpha\log(1+1/\alpha)$ is uniformly bounded above. Since $\im\zeta\geqslant 1/\alpha$, we have
\begin{equation}
|1-e^{-2\pi\ii\alpha w}|\asymp e^{2\pi\alpha\im w}\asymp e^{2\pi\alpha\im\zeta}.
\end{equation}
By \eqref{equ-range-sigma-f}, \eqref{equ-tau} and \eqref{equ-L-f}, we have
\begin{equation}
|\Phi_f^{-1}(\zeta)|=|\tau_f\circ L_f^{-1}(\zeta)|=\left|\frac{\sigma_f}{1-e^{-2\pi\ii\alpha w}}\right| \asymp \frac{\alpha}{e^{2\pi\alpha\im\zeta}}.
\end{equation}
Denote $y:=\im \Expo^{-1}\circ\Phi_f^{-1}(\zeta)$. By definition we have $\tfrac{4}{27}e^{-2\pi y}\asymp \alpha/e^{2\pi\alpha\im\zeta}$. A direct calculation shows that $y=\alpha\,\im\zeta+\tfrac{1}{2\pi}\log\frac{1}{\alpha}+\MO(1)$, where $\MO(1)$ is a number whose absolute value is less than a universal constant.

\medskip
(b) We divide the arguments into two cases. Firstly we assume that $\zeta\in\MD_f$ with $\re\zeta\in[0,1/(2\alpha)]$. By Proposition \ref{prop-Cheraghi-L-f}(b), we have
\begin{equation}\label{equ-Lf-D-2}
|L_f^{-1}(\zeta)-\zeta|\leqslant D_2\log(2+|\zeta|).
\end{equation}
Let $D_1'>0$ be the smallest constant depending only on $D_2$ such that if $|\zeta|\geqslant D_1'$, then $|\zeta|\geqslant D_2\log(2+|\zeta|)+1$. If $|\zeta|\geqslant D_1'$, $\re\zeta\in[0,1/(2\alpha)]$ and $\im\zeta\in[-3,1/\alpha]$, by \eqref{equ-Lf-D-2} we have
\begin{equation}\label{equ-Lf-asym}
|L_f^{-1}(\zeta)|\asymp |\zeta|+1.
\end{equation}
If $|\zeta|\leqslant D_1'$, $\re\zeta\in[0,1/(2\alpha)]$ and $\im\zeta\in[-3,1/\alpha]$, by Lemma \ref{lema-Cheraghi-L-f-add}(a), there exists a constant $D_1>1$ depending only on $D_1'$ such that $D_0\leqslant |L_f^{-1}(\zeta)|\leqslant D_1$. Therefore, we still have \eqref{equ-Lf-asym}.

Next we assume that $\re\zeta\in[1/(2\alpha),\lfloor\tfrac{1}{\alpha}\rfloor-\kc\,]$. By Proposition \ref{prop-Cheraghi-L-f}(b), we have
\begin{equation}\label{equ-Lf-D-2-2}
|L_f^{-1}(\zeta)-\zeta|\leqslant D_2\log(2+|\zeta-1/\alpha|).
\end{equation}
If $|\zeta-1/\alpha|\geqslant D_1'$, then $|\zeta-1/\alpha|\geqslant D_2\log(2+|\zeta-1/\alpha|)+1$. If $|\zeta-1/\alpha|\geqslant D_1'$, $\re\zeta\in[1/(2\alpha),\,\lfloor\tfrac{1}{\alpha}\rfloor-\kc\,]$ and $\im\zeta\in[-3,1/\alpha]$, by \eqref{equ-Lf-D-2-2} we have
\begin{equation}\label{equ-Lf-asym-2}
|L_f^{-1}(\zeta)-1/\alpha|=|(L_f^{-1}(\zeta)-\zeta)+(\zeta-1/\alpha)|\asymp |\zeta-1/\alpha|+1.
\end{equation}
If $|\zeta-1/\alpha|\leqslant D_1'$, $\re\zeta\in[1/(2\alpha),\lfloor\tfrac{1}{\alpha}\rfloor-\kc\,]$ and $\im\zeta\in[-3,1/\alpha]$, by Lemma \ref{lema-Cheraghi-L-f-add}(b), we have $D_0\leqslant |L_f^{-1}(\zeta)-1/\alpha|\leqslant D_1$. Therefore, in this case we still have \eqref{equ-Lf-asym-2}.

Denote $w:=L_f^{-1}(\zeta)$. By \eqref{equ-Lf-D-2} and \eqref{equ-Lf-D-2-2}, if $\alpha$ is small enough, then $-\tfrac{1}{4}\leqslant\re(\alpha w)\leqslant \tfrac{5}{4}$ and $|\alpha w|\leqslant\tfrac{3}{2}$. By \eqref{equ-range-sigma-f}, \eqref{equ-L-f}, \eqref{equ-Lf-asym} and \eqref{equ-Lf-asym-2}, we have
\begin{equation}
\begin{split}
|\Phi_f^{-1}(\zeta)|=\left|\frac{\sigma_f}{1-e^{-2\pi\ii\alpha w}}\right| \asymp &~\max\Big\{\frac{1}{|w|},\frac{1}{|w-1/\alpha|}\Big\}\\
\asymp&~ \max\Big\{\frac{1}{1+|\zeta|},\frac{1}{1+|\zeta-1/\alpha|}\Big\}.
\end{split}
\end{equation}
Then the estimate of $\im\, \Expo^{-1}\circ\Phi_f^{-1}(\zeta)$ follows by a direct calculation.
\end{proof}

\begin{rmk}
(1) There exist some overlaps between the estimates in Lemma \ref{lema-key-estimate-lp}(a) and (b). Indeed, if $\zeta\in\MD_f$ and $\im\zeta\asymp 1/\alpha$, then
\begin{equation}
|\Phi_f^{-1}(\zeta)|\asymp\alpha \quad\text{and}\quad
\im \Expo^{-1}\circ\Phi_f^{-1}(\zeta)=\tfrac{1}{2\pi}\log\tfrac{1}{\alpha}+\MO(1).
\end{equation}

(2) Lemma \ref{lema-key-estimate-lp} illustrates how the renormalization microscopes $\chi_f$ reshapes the geometry of the Siegel disk at deeper scales.
Specifically, Part (a) is for the points deep in the Siegel disk while Part (b) is for the points close to the Siegel boundary.
\end{rmk}

The following lemma can be seen as an inverse version of Lemma \ref{lema-key-estimate-lp}.

\begin{lem}\label{lema-key-esti-inverse}
There exist constants $D_4$, $D_5>1$ and $\varepsilon_3'\in(0,\varepsilon_3]$ such that for all $f\in\IS_\alpha\cup\{Q_\alpha\}$ with $\alpha\in(0,\varepsilon_3']$, we have
\begin{enumerate}
\item If $\zeta'\in\C$ satisfies $\im\zeta'\geqslant\frac{1}{2\pi}\log\tfrac{1}{\alpha}+D_4$, $\Expo(\zeta')\in\MP_f$  and $\Phi_f\circ \Expo(\zeta')\in (0,2]+\ii\,[-2,+\infty)$, then
\begin{equation}
\left|\im \Phi_f\circ\Expo(\zeta')-\frac{1}{\alpha}\Big(\im\zeta'-\frac{1}{2\pi}\log\frac{1}{\alpha}\Big)\right|\leqslant\frac{D_5}{\alpha}.
\end{equation}
\item If $\zeta'\in\C$ satisfies $\im\zeta'< \frac{1}{2\pi}\log\tfrac{1}{\alpha}+D_4$, $\Expo(\zeta')\in\MP_f$ and $\Phi_f\circ \Expo(\zeta')\in (0,2]+\ii\,[-2,+\infty)$, then
\begin{equation}
\big|\log\big(3+\im \Phi_f\circ\Expo(\zeta')\big)-2\pi\im\zeta'\big|\leqslant D_5.
\end{equation}
\end{enumerate}
\end{lem}

\begin{proof}
(a) Denote $\zeta=\Phi_f\circ \Expo(\zeta')\in\Phi_f(\MP_f)$. By Lemma \ref{lema-key-estimate-lp}(a), if $\im\zeta\geqslant 1/\alpha$ we have
\begin{equation}\label{equ-down-1}
\Big|\im \zeta-\frac{1}{\alpha}\Big(\im\zeta'-\frac{1}{2\pi}\log\frac{1}{\alpha}\Big)\Big|\leqslant\frac{D_3}{\alpha}.
\end{equation}
Suppose $\re\zeta\in(0,2]$ and $\im \zeta\in[-2,1/\alpha)$. By Lemma \ref{lema-key-estimate-lp}(b), we have
\begin{equation}
\im\zeta'\leqslant \frac{1}{2\pi}\log(1+|\zeta|)+D_3<\frac{1}{2\pi}\log\Big(\frac{1}{\alpha}+3\Big)+D_3<\frac{1}{2\pi}\log\frac{1}{\alpha}+D_3+1.
\end{equation}
Therefore, if $\im\zeta'\geqslant \tfrac{1}{2\pi}\log\tfrac{1}{\alpha}+D_3+1$, then $\im\zeta\geqslant 1/\alpha$ or $\im\zeta<-2$. By the assumption in the lemma we have $\im\zeta\geqslant 1/\alpha$ and \eqref{equ-down-1} holds. Then Part (a) follows if we set $D_4:=D_3+1$ and $D_5:=D_3$.

\medskip
(b) Denote $\zeta=\Phi_f\circ \Expo(\zeta')\in (0,2]+\ii\,[-2,+\infty)$. By \eqref{equ-down-1}, if $\im\zeta\in[1/\alpha,(1+2D_3)/\alpha]$, we have $|\log\tfrac{1}{\alpha}+2\pi\alpha\im\zeta-2\pi\im\zeta'|\leqslant 2\pi D_3$ and hence
\begin{equation}
\begin{split}
|\log(3+\im\zeta)-2\pi\im\zeta'|
\leqslant &~ |\log(3\alpha+\alpha\im\zeta)-2\pi\alpha\im\zeta|+2\pi D_3\\
\leqslant &~ \log(4+2 D_3)+6\pi D_3+2\pi.
\end{split}
\end{equation}
By Lemma \ref{lema-key-estimate-lp}(b), if $\re\zeta\in(0,2]$ and $\im\zeta\in(-2,1/\alpha)$ we have $|\log(1+|\zeta|)-2\pi\im \zeta'|\leqslant 2\pi D_3$ and hence
\begin{equation}
\begin{split}
|\log(3+\im\zeta)-2\pi\im\zeta'|
\leqslant &~ |\log(3+\im\zeta)-\log(1+|\zeta|)|+2\pi D_3\\
\leqslant &~ \log 5+2\pi D_3.
\end{split}
\end{equation}
Set $D_5=\log(4+2 D_3)+6\pi D_3+2\pi$. Then if $\im\zeta<(1+2D_3)/\alpha$ we have
\begin{equation}\label{equ-down-2}
|\log(3+\im\zeta)-2\pi\im\zeta'|\leqslant D_5.
\end{equation}
Suppose $\im \zeta\geqslant (1+2D_3)/\alpha$. By Lemma \ref{lema-key-estimate-lp}(a), we have
\begin{equation}
\im\zeta'\geqslant \alpha\,\im\zeta+\frac{1}{2\pi}\log\frac{1}{\alpha}-D_3\geqslant \frac{1}{2\pi}\log\frac{1}{\alpha}+D_3+1.
\end{equation}
Therefore, if $\im\zeta'< \tfrac{1}{2\pi}\log\tfrac{1}{\alpha}+D_3+1$, then $\im\zeta< (1+2D_3)/\alpha$ and we have \eqref{equ-down-2}.

Summering the constants in Parts (a) and (b), the lemma follows if we set $D_4:=D_3+1$ and $D_5:=\log(4+2 D_3)+6\pi D_3+2\pi$.
\end{proof}

In the following, we use $h'$ to denote $\partial h/\partial z$ if $h$ is holomorphic and denote $\partial \overline{h}/\partial z$ if $h$ is anti-holomorphic.
The following result is useful in the estimate of the Euclidean length of curves in Fatou coordinate planes.

\begin{prop}\label{prop-key-estimate-yyy}
There exist positive constants $\varepsilon_4\in(0,\varepsilon_3']$ and $D_2'$, $D_6'$, $D_6>1$ such that for all $f\in\IS_\alpha\cup\{Q_\alpha\}$ with $\alpha\in(0,\varepsilon_4]$, we have
\begin{enumerate}
\item If $\zeta\in\MD_f$ with $\im\zeta\geqslant 1/(4\alpha)$, then
\begin{equation}
|\chi_f'(\zeta)-\alpha| \leqslant D_6\alpha e^{-2\pi\alpha\im \zeta}.
\end{equation}

\item If $\zeta\in\MD_f$ with $\im \zeta\in[-2,1/(4\alpha)]$ and $r=\min\{|\zeta|,\,|\zeta-1/\alpha|\}\geqslant D_6'$, then
\begin{equation}
|\chi_f'(\zeta)|\leqslant \frac{\alpha}{1-e^{-2\pi\alpha(r-D_2'\log(2+r))}}\left(1+\frac{D_6}{r}\right),
\end{equation}
where $D_2'$ and $D_6'$ are chosen such that $r-2D_2'\log(2+r)\geqslant 4$ if $r\geqslant D_6'$.
\end{enumerate}
\end{prop}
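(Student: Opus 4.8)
The plan is to turn $\chi_f'$ into a closed expression and then feed in the control of $\Phi_f^{-1}$ collected in \S\ref{subsec-esti-2}. Exactly as in the proof of Lemma \ref{lema-key-estimate-lp}, the pre-compactness of $\IS_\alpha$ reduces the statement to $\zeta\in\Phi_f(\MP_f)$ (for $\zeta\in\MD_f\setminus\Phi_f(\MP_f)$ one writes $\zeta=(\zeta-j)+j$ with $\zeta-j\in\Phi_f(\MP_f)$, $1\le j\le k_f+\kc+\kc_1+2$, and the bounded iterate $f^{\circ j}$, together with the finitely many branch shifts of $\Expo$, only contributes universal constants). Fix $\zeta\in\Phi_f(\MP_f)$ and put $z:=\Phi_f^{-1}(\zeta)$, $w:=L_f^{-1}(\zeta)$, so $z=\tau_f(w)=\Expo(\chi_f(\zeta))$. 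Writing $\Expo(u)=-\tfrac4{27}\overline{e^{2\pi\ii u}}$ and using that $\chi_f$ is anti-holomorphic, one gets $\overline{\chi_f(\zeta)}=\tfrac{\ii}{2\pi}\log\!\bigl(-\tfrac{27}{4}z\bigr)$; differentiating in $\zeta$ and using $\Phi_f^{-1}=\tau_f\circ L_f^{-1}$ together with $\tau_f'(w)/\tau_f(w)=-2\pi\ii\alpha\,e^{-2\pi\ii\alpha w}/(1-e^{-2\pi\ii\alpha w})$ (from \eqref{equ-tau}) yields
\begin{equation*}
\chi_f'(\zeta)=\frac{\ii}{2\pi}\cdot\frac{(\Phi_f^{-1})'(\zeta)}{\Phi_f^{-1}(\zeta)}=\frac{\ii}{2\pi}\cdot\frac{1}{z\,\Phi_f'(z)}=\alpha\,\frac{e^{-2\pi\ii\alpha w}}{1-e^{-2\pi\ii\alpha w}}\,(L_f^{-1})'(\zeta).
\end{equation*}
Thus everything reduces to estimating $z\,\Phi_f'(z)$ (for part (a)) or the two factors $e^{-2\pi\ii\alpha w}/(1-e^{-2\pi\ii\alpha w})$ and $(L_f^{-1})'(\zeta)$ (for part (b)), and the only real input is the position of $z=\Phi_f^{-1}(\zeta)$.

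For part (a), $\im\zeta\ge 1/(4\alpha)$, so by Lemma \ref{lema-key-estimate-lp}(a) one has $|z|\asymp\alpha\,e^{-2\pi\alpha\im\zeta}$, which is small. The near-$0$ behaviour of the Fatou coordinate — namely $\Phi_f(z)=\tfrac1{2\pi\ii\alpha}\log z+c_f+r_f(z)$ with $r_f$ holomorphic near $0$, $r_f(0)=0$, which follows from $\Phi_f(f(z))=\Phi_f(z)+1$ and $f'(0)=e^{2\pi\ii\alpha}$ — gives $z\,\Phi_f'(z)=\tfrac1{2\pi\ii\alpha}\bigl(1+2\pi\ii\alpha\,z\,r_f'(z)\bigr)=\tfrac1{2\pi\ii\alpha}\bigl(1+O(|z|/\alpha)\bigr)$ as $z\to 0$, the correction term being controlled uniformly over $\IS_\alpha\cup\{Q_\alpha\}$ by pre-compactness of $\IS_0$ and the explicit form of $\tau_f$. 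Substituting into the formula, $\chi_f'(\zeta)=\tfrac{\ii}{2\pi}\cdot 2\pi\ii\alpha\cdot\bigl(1+O(|z|/\alpha)\bigr)^{-1}=-\alpha+O(|z|)=-\alpha+O\!\bigl(\alpha\,e^{-2\pi\alpha\im\zeta}\bigr)$, which is the assertion of (a).

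For part (b), $\im\zeta\in[-2,1/(4\alpha)]$ and $\varrho:=\min\{|\zeta|,|\zeta-1/\alpha|\}\ge D_6'$. By Proposition \ref{prop-Cheraghi-L-f}(b), $|w-\zeta|\le D_2\log(2+\varrho)$; fix $D_2'\ge D_2$ and $D_6'$ large enough that $\varrho-D_2'\log(2+\varrho)\ge 2$. As $w$ then lies at distance $\ge\varrho-D_2'\log(2+\varrho)$ from both $0$ and $1/\alpha$, the explicit form of $\tau_f$ keeps $e^{-2\pi\ii\alpha w}$ away from $1$ and bounds the first factor by $\bigl(1-e^{-2\pi\alpha(\varrho-D_2'\log(2+\varrho))}\bigr)^{-1}$ (a short elementary inequality being used to put it in exactly this shape). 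For the second factor, $|(L_f^{-1})'(\zeta)|=1/|L_f'(w)|$, apply Lemma \ref{lema-key-pre-Che}(b) with $R:=\varrho-C_0'$: since $\partial A_1$ sits at distance $\approx C_0'$ from $0$ and from $1/\alpha$ while its remaining pieces (the wings of $\ell_{C_0'}$) recede far from $w$ at the relevant heights, $\D(w,R)\subset A_1$ with $R\in[C_0'',2/\alpha]$ and $\im w\ge -1/\alpha$, whence $|(L_f^{-1})'(\zeta)|\le 1+C_0/(\varrho-C_0')\le 1+D_6/\varrho$. Multiplying the two bounds gives exactly the estimate of (b). The constants $r_4,D_2',D_6',D_6$ are then fixed by taking the worst of the finitely many constants produced and shrinking $r_4$ so that all smallness requirements on $\alpha$ are met.

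The formula and the first-factor estimate in (b) are routine; the technical heart is the uniform near-$0$ control of $\Phi_f$ used in (a): one must know that $\Phi_f(z)-\tfrac1{2\pi\ii\alpha}\log z$ is, after subtracting a constant, holomorphic near $0$ with a correction that behaves like $2\pi\ii\alpha\,z\,r_f'(z)=O(|z|/\alpha)$ uniformly in the family — the crude bound $(L_f^{-1})'(\zeta)=1+O(\alpha)$ that comes straight out of Lemma \ref{lema-key-pre-Che}(b) is useless once $e^{-2\pi\alpha\im\zeta}\ll\alpha$. A secondary point is to check in (b) that the first-factor bound really takes the stated form with $\varrho-D_2'\log(2+\varrho)$ in the exponent (which forces one to compare $\re w$ with $\varrho$, distinguishing whether $\varrho$ is realized through $\re\zeta$ or through $\im\zeta$), and that $w$ genuinely admits a disk of radius $\asymp\varrho$ inside $A_1$, which uses the geometry of $A_1$ built from $\ell_{C_0'}$.
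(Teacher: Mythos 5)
Your part (b) is essentially the paper's argument, but both the overall reduction and your treatment of part (a) have gaps, and one step is not what you assert it to be.

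\textbf{Part (a).}
The paper does not prove (a); it cites \cite[Proposition~3.3]{Che13}. Your direct route through the local expansion $\Phi_f(z)=\tfrac1{2\pi\ii\alpha}\log z+c_f+r_f(z)$ is legitimate in spirit, but the whole weight rests on the claim $2\pi\ii\alpha\,z\,r_f'(z)=O(|z|/\alpha)$ uniformly, i.e.\ $r_f'(z)=O(1/\alpha^2)$, and this is precisely what you do not establish. ``Pre-compactness of $\IS_0$ and the explicit form of $\tau_f$'' is not a proof: pre-compactness bounds the Fatou coordinates on compacta disjoint from $0$ but says nothing about the $\alpha$-dependence of the correction at $z\to0$. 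If you unwind your claimed bound through $\Phi_f=L_f\circ\tau_f^{-1}$ you find it is equivalent to $|L_f'(w)-1|=O(e^{-2\pi\alpha\im w})$ for $\im w$ large, i.e.\ an \emph{exponential} improvement over Lemma~\ref{lema-Che-L-f}'s $K_0/\im w$. That exponential decay of $L_f'-1$ is exactly the content of the cited \cite[Proposition~3.3]{Che13}; within the toolkit summarized in this paper it would have to be pushed through by integrating the pointwise bound $|F_f'(w)-1|\le C_0\tfrac{\alpha}{r}e^{-2\pi\alpha\im w}$ of Lemma~\ref{lema-key-pre-Che}(a) along the $F_f$-orbit, which is not immediate. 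So as written, (a) has a genuine gap at its ``technical heart,'' which you correctly identify but do not fill.

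A side remark on the sign: carried through honestly, your computation gives $\chi_f'(\zeta)=-\alpha+O(\alpha e^{-2\pi\alpha\im\zeta})$ (indeed for a rigid rotation one finds $\chi_f(\zeta)=c-\alpha\overline{\zeta}$, hence $\chi_f'\equiv -\alpha$), so ``which is the assertion of (a)'' is not right verbatim: the statement has $+\alpha$. Since only $|\chi_f'|$ enters the paper's applications (Lemma~\ref{lema-go-up}, Lemma~\ref{lema-height-expanding}) this discrepancy is harmless, but you should either carry the sign or phrase the conclusion as $\bigl|\,|\chi_f'(\zeta)|-\alpha\,\bigr|\le D_6\alpha e^{-2\pi\alpha\im\zeta}$; do not silently identify $-\alpha$ with $\alpha$.

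\textbf{Part (b).}
Your two-factor decomposition and the choice $R\asymp\varrho$ in Lemma~\ref{lema-key-pre-Che}(b) coincide with the paper's argument, and you correctly flag the two points that need care (shape of the exponent; $\D(w,R)\subset A_1$). But the opening reduction --- treating $\zeta\in\MD_f\setminus\Phi_f(\MP_f)$ by saying the shift and the bounded iterate ``only contribute universal constants'' --- is too coarse for (b), which, unlike (a), is stated with a \emph{sharp} leading coefficient: the bound is $\alpha\bigl(1-e^{-2\pi\alpha(\varrho-D_2'\log(2+\varrho))}\bigr)^{-1}\bigl(1+D_6/\varrho\bigr)$, not $C$ times it. A multiplicative loss of any constant $C>1$ from the iterate cannot be absorbed into the $\bigl(1+D_6/\varrho\bigr)$ factor as $\varrho\to\infty$. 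The paper's $\Lambda_2$ case handles exactly this: it writes $\zeta=(\zeta-j_0)+j_0$, passes to $\widetilde{w}=F_f^{\circ j_0}(w)$ for the $\tau_f$-factor, and estimates the extra Jacobian by $\bigl|(F_f^{\circ j_0})'(w)\bigr|\le 1+D_2'/|\zeta-1/\alpha|$ via Lemma~\ref{lema-key-pre-Che}(a), so the extra factor itself is $1+O(1/\varrho)$, not $O(1)$. You need this finer estimate to close (b); the crude ``universal constants'' reduction does not reproduce it. Apart from this, and the need to actually verify $\D(w,c\varrho)\subset A_1$ by comparing $w$ to the two noses and the four wings of $\ell_{C_0'}\cup(-\ell_{C_0'}+1/\alpha)$ (which you defer but should spell out, distinguishing whether $\varrho$ is realized by $|\zeta|$ or by $|\zeta-1/\alpha|$), part (b) is on the right track.
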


\begin{proof}
Part (a) is proved in \cite[Proposition 3.3]{Che13}. We only prove Part (b).
For the continuous function
\begin{equation}
\varphi(z):=|1-e^{2\pi\ii z}|,
\end{equation}
where $z\in\Xi_\varrho:=\{\varrho e^{\ii\theta}:\theta\in[-\tfrac{\pi}{4},\tfrac{5\pi}{4}]\}$ with $0<\varrho\leqslant\tfrac{2}{3}$, by a direct calculation\footnote{By setting $r:=2\pi\varrho$, $\beta:=\theta-\frac{\pi}{2}$ and considering the derivative of $\beta\mapsto\big(\varphi(\frac{r}{2\pi}e^{\ii(\beta+\frac{\pi}{2})})\big)^2$, it suffices to verify that $e^{-r \cos\beta}\sin\beta-\sin(\beta-r\sin\beta)>0$ for any $r\in(0,\frac{4\pi}{3}]$ and $\beta\in(0,\frac{3\pi}{4}]$. This can be done by considering three cases: (1) $\beta-r\sin\beta\in[-\pi,0]$; (2) $\beta-r\sin\beta\in(0,\frac{\pi}{2}]$ and $\beta\in(0,\frac{\pi}{2}]$; and (3) $\beta-r\sin\beta\in(0,\frac{3\pi}{4}]$ and $\beta\in(\frac{\pi}{2},\frac{3\pi}{4}]$. } we have
\begin{equation}\label{equ-varphi-min}
\min_{z\in\Xi_\varrho}\varphi(z)=\varphi(\varrho e^{\ii\frac{\pi}{2}})=\varphi(\varrho\ii)=1-e^{-2\pi \varrho}.
\end{equation}

\textbf{Case 1.}
We first consider $\zeta\in\Lambda_1:=\MD_f\cap\{\zeta\in\C:\re\zeta\in(0,1/(2\alpha)]\text{ and } \im\zeta\in[-2,1/(4\alpha)]\}$ and denote $w:=L_f^{-1}(\zeta)\in\widetilde{\MP}_f$. By \eqref{equ-modify-exp}, \eqref{equ-tau}, \eqref{equ-L-f} and a straightforward calculation we have
\begin{equation}\label{equ-chi-deri-1}
\begin{split}
\chi_f'(\zeta)=&~(\Expo^{-1}\circ\Phi_f^{-1})'(\zeta)=(\Expo^{-1}\circ\tau_f\circ L_f^{-1})'(\zeta)\\
=&~-\frac{\alpha}{1-e^{2\pi\ii\alpha w}}\cdot\frac{1}{L_f'(w)}.
\end{split}
\end{equation}

By Proposition \ref{prop-Cheraghi-L-f}(b), we have
\begin{equation}\label{equ:w-zeta}
w\in\overline{\D}(\zeta,D_2\log(2+|\zeta|)).
\end{equation}
Let $C_0''\geqslant 6$ be the constant and $A_1=A_1(C_0')$ be the domain introduced in Lemma \ref{lema-key-pre-Che}(b). Let $C_1\geqslant 1$ be a constant depending only on $C_0''$ and $D_2$ such that if $|\zeta|\geqslant C_1$, then
\begin{equation}\label{equ-D-w-C}
|\zeta|-2D_2\log(2+|\zeta|)\geqslant 4 \text{\quad and\quad} \overline{\D}(w,C_0'')\subset A_1.
\end{equation}
We assume that $\widehat{\varepsilon}_1>0$ is small such that if $\alpha\in(0,\widehat{\varepsilon}_1]$, then $\alpha|\zeta|<\tfrac{3}{5}$ and $D_2\alpha\log(2+|\zeta|)< \tfrac{1}{15}$ for all $\zeta\in \Lambda_1$.
Hence
\begin{equation}\label{equ:alpha-zeta}
\alpha|\zeta|+D_2\alpha\log(2+|\zeta|)<\tfrac{2}{3} \text{\quad for all } \zeta\in\Lambda_1.
\end{equation}
By \eqref{equ:w-zeta}, \eqref{equ-D-w-C} and \eqref{equ:alpha-zeta}, for $\zeta\in\Lambda_1':=\Lambda_1\cap\{\zeta\in\C:|\zeta|\geqslant C_1\}$ we have
$\alpha w\in\{\varrho e^{\ii\theta}:0<\varrho\leqslant\frac{2}{3} \text{ and }-\tfrac{\pi}{4}<\theta<\tfrac{3\pi}{4}\}$. According to \eqref{equ-varphi-min}, we have
\begin{equation}\label{equ-1-exp-1}
|1-e^{2\pi\ii\alpha w}|\geqslant 1-e^{-2\pi\alpha(|\zeta|-D_2\log(2+|\zeta|))}.
\end{equation}
On the other hand, by \eqref{equ-D-w-C}, Lemma \ref{lema-key-pre-Che}(b)(d) and Proposition \ref{prop-Cheraghi-L-f}(b), there exists a constant $C_2\geqslant 1$ depending only on $C_1$ and $D_2$ such that if $\zeta\in\Lambda_1'$ then
\begin{equation}\label{equ-L-f-deri-1}
\frac{1}{|L_f'(w)|}\leqslant 1+\frac{C_2}{|\zeta|}.
\end{equation}
Combining \eqref{equ-chi-deri-1}, \eqref{equ-1-exp-1} and \eqref{equ-L-f-deri-1}, if $\zeta\in\Lambda_1'$ we have
\begin{equation}
|\chi_f'(\zeta)|\leqslant\frac{\alpha}{1-e^{-2\pi\alpha(|\zeta|-D_2\log(2+|\zeta|))}}\left(1+\frac{C_2}{|\zeta|}\right).
\end{equation}

\textbf{Case 2.}
Suppose $\zeta\in\Lambda_2:=\MD_f\cap\{\zeta\in\C:\re\zeta>1/(2\alpha) \text{ and } \im\zeta\in[-2,1/(4\alpha)]\}$. By the definition of $\MD_f$ in \eqref{equ-MD-f}, there exist an integer $J\geqslant 1$ which is independent of $f$ and an integer $j_0\in\N$ with $j_0\leqslant J$ such that $\zeta-j_0\in\Phi_f(\MP_f)\cap\{\zeta:\re\zeta>1/(2\alpha)\}$. We denote $w:=L_f^{-1}(\zeta-j_0)\in\widetilde{\MP}_f$ and $\widetilde{w}:=F_f^{\circ j_0}(w)$. Then
\begin{equation}\label{equ-chi-deri-2}
\begin{split}
\chi_f'(\zeta)=&~(\Expo^{-1}\circ f^{\circ j_0}\circ \Phi_f^{-1})'(\zeta-j_0)\\
=&~(\Expo^{-1}\circ\tau_f\circ F_f^{\circ j_0}\circ L_f^{-1})'(\zeta-j_0)=-\frac{\alpha}{1-e^{2\pi\ii\alpha \widetilde{w}}}\cdot\frac{(F_f^{\circ j_0})'(w)}{L_f'(w)}.
\end{split}
\end{equation}

By Proposition \ref{prop-Cheraghi-L-f}(b), we have
\begin{equation}
w\in\overline{\D}\big(\zeta-j_0,D_2\log(2+|\zeta-j_0-\tfrac{1}{\alpha}|)\big).
\end{equation}
Let $C_0''\geqslant 6$ and $A_1=A_1(C_0')$ be introduced as in Lemma \ref{lema-key-pre-Che}(b). By Lemma \ref{lema-key-pre-Che}(a), there exist two positive constants $C_1'$ and $C_1''$ depending only on $C_0''$, $D_2$ and $J$ such that if $|\zeta-1/\alpha|\geqslant C_1'$, then
\begin{equation}\label{equ-D-w-C-2}
\overline{\D}(w,C_0'')\subset A_1 \text{ and } |F_f^{\circ j}(w)-1/\alpha|\geqslant C_1''|\zeta-1/\alpha|
\end{equation}
for all $j=0,1,\cdots,j_0$.
Also by Lemma \ref{lema-key-pre-Che}(a), there exists a constant $D_2'\geqslant D_2$ depending only on $C_0''$, $C_1''$, $D_2$ and $J$ such that
\begin{equation}
\widetilde{w}=F_f^{\circ j_0}(w)\in\D\big(\zeta,D_2'\log(2+|\zeta-1/\alpha|)\big)
\end{equation}
and
\begin{equation}\label{equ-F-deri}
|(F_f^{\circ j_0})'(w)|\leqslant 1+\frac{D_2'}{|\zeta-1/\alpha|}.
\end{equation}

Let $C_2'\geqslant C_1'$ be a constant depending only on $C_1'$ and $D_2'$ such that if $|\zeta-1/\alpha|\geqslant C_2'$, then
\begin{equation}
|\zeta-1/\alpha|-2D_2'\log(2+|\zeta-1/\alpha|)\geqslant 4.
\end{equation}
Moreover, we assume that $\widehat{\varepsilon}_2>0$ is small such that if $\alpha\in(0,\widehat{\varepsilon}_2]$, then
\begin{equation}
\alpha|\zeta-1/\alpha|+D_2'\alpha\log(2+|\zeta-1/\alpha|)<\tfrac{2}{3} \text{\quad for all } \zeta\in\Lambda_2.
\end{equation}
For $\zeta\in\Lambda_2':=\Lambda_2\cap\{\zeta\in\C:|\zeta-1/\alpha|\geqslant C_2'\}$, we have
$\alpha \widetilde{w}-1\in\{\varrho e^{\ii\theta}:0<\varrho\leqslant\frac{2}{3} \text{ and }\tfrac{\pi}{4}<\theta<\tfrac{5\pi}{4}\}$.
By \eqref{equ-varphi-min} and $|1-e^{2\pi\ii z}|=|1-e^{2\pi\ii(z-1)}|$, we have
\begin{equation}\label{equ-1-exp-2}
|1-e^{2\pi\ii\alpha \widetilde{w}}|\geqslant 1-e^{-2\pi\alpha(|\zeta-1/\alpha|-D_2'\log(2+|\zeta-1/\alpha|))}.
\end{equation}
Similarly, by \eqref{equ-D-w-C-2}, Lemma \ref{lema-key-pre-Che}(b)(d) and Proposition \ref{prop-Cheraghi-L-f}(b), there exists a constant $C_3\geqslant 1$ depending only on $C_1''$, $C_2'$ and $D_2'$ such that if $\zeta\in\Lambda_2'$ then
\begin{equation}\label{equ-L-f-deri-2}
\frac{1}{|L_f'(w)|}\leqslant 1+\frac{C_3}{|\zeta-1/\alpha|}.
\end{equation}
Combining \eqref{equ-chi-deri-2}, \eqref{equ-F-deri}, \eqref{equ-1-exp-2} and \eqref{equ-L-f-deri-2}, if $\zeta\in\Lambda_2'$ we have
\begin{equation}
|\chi_f'(\zeta)|\leqslant\frac{\alpha}{1-e^{-2\pi\alpha(|\zeta-1/\alpha|-D_2'\log(2+|\zeta-1/\alpha|))}}\left(1+\frac{C_3'}{|\zeta-1/\alpha|}\right)
\end{equation}
for a constant $C_3'>0$ depending only on $C_3$ and $D_2'$.
The proof is complete if we set $\varepsilon_4:=\min\{\varepsilon_3',\widehat{\varepsilon}_1,\widehat{\varepsilon}_2\}$, $D_6':=\max\{C_1,C_2'\}$ and $D_6:=\max\{C_2,C_3'\}$.
\end{proof}

\begin{rmk}
Proposition \ref{prop-key-estimate-yyy} will be used in the proof of Lemma \ref{lema-go-up}. In \cite[Proposition 6.18]{Che19}, an estimate of $|\chi_f'(\zeta)|$ has been obtained for $\zeta\in[1,1/(2\alpha)]$ in another form.
\end{rmk}

\subsection{Renormalization tower and orbit relations}\label{subsec-basic-defi}

In the rest of this paper, we always assume that the integer $N$ is large so that $N\geqslant 1/\varepsilon_4$, where $\varepsilon_4>0$ is the constant introduced in Proposition \ref{prop-key-estimate-yyy}. Let $[0;a_1,a_2,\cdots]$ be the continued fraction expansion of $\alpha\in\HT_N$. Define $\alpha_0:=\alpha$, and inductively for $n\geqslant 1$, define the sequence of real numbers $\alpha_n\in(0,1)$ as
\begin{equation}\label{equ-gauss}
\alpha_n=\frac{1}{\alpha_{n-1}}-\Big\lfloor\frac{1}{\alpha_{n-1}}\Big\rfloor, \text{ where } n\geqslant 1.
\end{equation}
Then each $\alpha_n$ has the continued fraction expansion $[0;a_{n+1},a_{n+2},\cdots]$. By definition, we have $\alpha_n\in(0,\varepsilon_4]$ for all $n\in\N$.

Let $\alpha\in \HT_N$ and $f_0\in\IS_\alpha\cup \{Q_\alpha\}$. By Theorem \ref{thm-IS-attr-rep-3}, the following sequence of maps is well-defined for all $n\geqslant 0$:
\begin{equation}
f_{n+1}:=\MMR f_n:U_{f_{n+1}}\to\C.
\end{equation}
Let $U_n:=U_{f_n}$ be the domain of definition of $f_n$ for $n\geqslant 0$. Then for all $n$, we have
\begin{equation}
f_n:U_n\to \C, ~f_n(0)=0,~f_n'(0)=e^{2\pi\ii\alpha_n} \text{\quad and\quad} \cv=\cv_{f_n}=-4/27.
\end{equation}

For $n\geqslant 0$, let $\Phi_n:=\Phi_{f_n}$ be the Fatou coordinate of $f_n:U_n\to\C$ defined in the perturbed petal $\MP_n:=\MP_{f_n}$ and let $\MC_n:=\MC_{f_n}$ and $\MC_n^\sharp:=\MC_{f_n}^\sharp$ be the corresponding sets for $f_n$ defined in \eqref{defi-C-f-alpha}. Let $k_n:=k_{f_n}$ be the positive integer in Proposition \ref{prop-CC-2} such that
\begin{equation}
S_n^0:=S_{f_n}=\MC_n^{-k_n}\cup(\MC_n^\sharp)^{-k_n}\subset\{z\in\MP_n:0<\re\Phi_n(z)<\lfloor\tfrac{1}{\alpha_n}\rfloor-\textbf{\textit{k}}-\tfrac{1}{2}\}.
\end{equation}
For $n\geqslant 0$, let $\widetilde{\MD}_n:=\widetilde{\MD}_{f_n}$ and $\MD_n:=\MD_{f_n}$ be the sets defined in \eqref{equ-MD-tilde-f} and \eqref{equ-MD-f} respectively. Note that $\MD_n\subset\widetilde{\MD}_n$ by Lemma \ref{lema:D-n}. According to Lemma \ref{lema:Phi-inverse}, we have a holomorphic map
\begin{equation}
\Phi_n^{-1}:\widetilde{\MD}_n\to U_n\setminus\{0\}
\end{equation}
such that $\Phi_n^{-1}(\zeta+1)=f_n\circ\Phi_n^{-1}(\zeta)$ if $\zeta$, $\zeta+1\in\widetilde{\MD}_n$. We denote the lift $\chi_{f_n,0}$ in \eqref{equ-chi-choice} by $\chi_{n,0}$. Then, for $n\geqslant 1$ we have
\begin{equation}\label{equ-chi-n-0}
\chi_{n,0}(\widetilde{\MD}_n)\subset\{\zeta\in\C:1\leqslant\re\zeta<\kc_1+2 \text{ and }\im\zeta>-2\}\subset\Phi_{n-1}(\MP_{n-1}).
\end{equation}
Each $\chi_{n,0}$ is anti-holomorphic. For $j\in\Z$ we define
\begin{equation}\label{eq:chi-n}
\chi_{n,j}:=\chi_{n,0}+j.
\end{equation}
In the following we are mainly interested in $\chi_{n,j}$ with $0\leqslant j\leqslant a_n=\lfloor\tfrac{1}{\alpha_{n-1}}\rfloor$.

\medskip
For $\delta>0$, let $B_\delta(X)$ be the $\delta$-neighborhood of a set $X\subset\C$ with respect to the Euclidean metric.
The following lemma will be used to prove the uniform contraction with respect to the hyperbolic metrics in the domains of adjacent renormalization levels (see Lemma \ref{lema:exp-conv}).

\begin{lem}[{\cite[Lemma 2.1]{AC18}}]\label{lema:comp-inclu}
There exists a constant $\delta_0>0$ depending only on the class $\IS_0$, such that for all $n\geqslant 1$ and $0\leqslant j\leqslant a_n$, then
\begin{equation}
B_{\delta_0}\big(\chi_{n,j}(\MD_n)\big)\subset\MD_{n-1}.
\end{equation}
\end{lem}

For $n\geqslant 0$, recall that $\MP_n$ is the perturbed petal of $f_n$. For $n\geqslant 1$, we define an anti-holomorphic map $\psi_n$ by
\begin{equation}\label{equ-psi-n}
\psi_n:=\Phi_{n-1}^{-1}\circ\chi_{n,0}\circ\Phi_n:\MP_n\to\MP_{n-1}.
\end{equation}
Hence we have the following diagrams:
\begin{equation}
\begin{CD}
\MP_{n-1} @<\Phi_{n-1}^{-1}<< \Phi_{n-1}(\MP_{n-1}) \\
@AA\psi_n A   @AA\chi_{n,0} A \\
\MP_n @>\Phi_n>> \Phi_n(\MP_n)
\end{CD}
\qquad\text{ and }\qquad
\begin{CD}
U_{n-1} @<\Phi_{n-1}^{-1}<< \MD_{n-1} \\
@.   @AA\chi_{n,j} A \\
U_n @<\Phi_n^{-1}<< \MD_n.
\end{CD}
\end{equation}
Each $\psi_n$ extends continuously to $0\in\partial\MP_n$ by mapping it to $0$.
For $n\geqslant 1$, we define the composition
\begin{equation}
\Psi_n:=\psi_1\circ\psi_2\circ\cdots\circ\psi_n:\MP_n\to\MP_0\subset U_0.
\end{equation}
For $n\geqslant 0$ and $i\geqslant 1$, define the sector
\begin{equation}
S_n^i:=\psi_{n+1}\circ\cdots\circ\psi_{n+i}(S_{n+i}^0)\subset \MP_n.
\end{equation}
In particular, $S_0^n\subset\MP_0$ for all $n\geqslant 0$. Define
\begin{equation}
\MP_n':=\{z\in\MP_n:0<\re \Phi_n(z)<\lfloor\tfrac{1}{\alpha_n}\rfloor-\textbf{\textit{k}}-1\}.
\end{equation}

Let $q_n$ be the denominator of the convergents $[0;a_1,\cdots,a_n]$ of the continued fraction expansion of $\alpha$.
Recall that $k_n=k_{f_n}$ is the positive integer introduced in Proposition \ref{prop-CC-2}. The following lemma was proved in \cite[\S 3]{Che19} and parts of the results can be also found in \cite[\S 1.5.5]{BC12}. The proof is based on the definition of near-parabolic renormalization.

\begin{lem}[{\cite[Lemmas 3.3 and 3.4]{Che19}}]\label{lema-Cheraghi-2}
For every $n\geqslant 1$, we have
\begin{enumerate}
\item For every $z\in\MP_n'$, $f_{n-1}^{\circ a_n}\circ\psi_n(z)=\psi_n\circ f_n(z)$ and $f_0^{\circ q_n}\circ\Psi_n(z)=\Psi_n\circ f_n(z)$;
\item For every $z\in S_n^0$, $f_{n-1}^{\circ (k_n a_n+1)}\circ\psi_n(z)=\psi_n\circ f_n^{\circ k_n}(z)$ and $f_0^{\circ (k_n q_n+q_{n-1})}\circ\Psi_n(z)=\Psi_n\circ f_n^{\circ k_n}(z)$; and
\item For every $m<n$, $f_n:\MP_n'\to \MP_n$ and $f_n^{\circ k_n}:S_n^0\to \MC_n\cup\MC_n^\sharp$ are conjugate to some iterates of $f_m$ on the set $\psi_{m+1}\circ\cdots\circ\psi_n(\MP_n)$.
\end{enumerate}
In particular,  the dynamics of $f_n$ is conjugate to the dynamics of $f_0$. Specifically, the first $k_n$ iterates of $f_n$ on $S_n^0$ corresponds to $k_n q_n+q_{n-1}$ iterates of $f_0$ and the next $\lfloor\tfrac{1}{\alpha_n}\rfloor-\textbf{\textit{k}}-2$ iterates corresponds to $q_n(\lfloor\tfrac{1}{\alpha_n}\rfloor-\textbf{\textit{k}}-2)$ iterates of $f_0$.
\end{lem}

For each $n\in\N$, by \eqref{equ-k-f-kc} we have
\begin{equation}\label{equ-b-n}
b_n:=k_n+\lfloor\tfrac{1}{\alpha_n}\rfloor-\textbf{\textit{k}}-2\geqslant a_{n+1}+1.
\end{equation}
From the definition of $\widetilde{\MD}_n$ in \eqref{equ-MD-tilde-f} and by Lemma \ref{lema-Cheraghi-2}, the following sets are well-defined for each $n\geqslant 0$ :
\begin{equation}\label{equ-omega-n}
\Omega_n^0:=\bigcup_{j=0}^{b_n}f_n^{\circ j}(S_n^0)\cup\{0\} \text{\quad and\quad}
\Omega_0^n:=\bigcup_{j=0}^{b_n q_n+q_{n-1}}f_0^{\circ j}(S_0^n)\cup\{0\}.
\end{equation}

\begin{defi}[High type Brjunos]
Let $N$ be the integer fixed before. Define
\begin{equation}\label{equ:Brjuno}
\MB_N:=
\left\{\alpha=[0;a_1,a_2,\cdots]\in (0,1)\setminus\Q
\left|
\begin{array}{l}
\alpha \text{ is Brjuno and}\\
a_n\geqslant N, \,\forall\, n\geqslant 1
\end{array}
\right.
\right\}.
\end{equation}
Then $\MB_N$ is strictly contained in $\HT_N$.
\end{defi}

\begin{prop}[{\cite[Propositions 3.5 and 5.10(2)]{Che19}}]\label{prop-Cherahi-nest}
Let $f_0\in\IS_\alpha\cup\{Q_\alpha\}$ with $\alpha\in \HT_N$. Then for all $n\geqslant 0$, we have
\begin{enumerate}
\item $\Omega_0^{n+1}$ is compactly contained in the interior of $\Omega_0^n$ and $f_0(\Omega_0^{n+1})\subset\Omega_0^n$;
\item If $\alpha\in\MB_N$, then $\Int\,(\bigcap_{n=0}^\infty\Omega_0^n)=\Delta_0$, where $\Delta_0$ is the Siegel disk of $f_0$.
\end{enumerate}
\end{prop}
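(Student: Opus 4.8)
The two assertions are combinatorial facts about the renormalization tower, and the natural plan is to reduce them to the orbit dictionary of Lemma~\ref{lema-Cheraghi-2}, the uniform Fatou-coordinate collar of Lemma~\ref{lema:comp-inclu}, and—for part (b)—the size estimates for the Siegel disks of the maps in $\IS_\alpha$ from \cite{Che16}. Throughout one keeps in mind that $\Omega_0^n$ is, by construction, the realization in the $f_0$-plane of the level-$n$ ``full cycle'' $\Omega_n^0=\bigcup_{j=0}^{b_n}f_n^{\circ j}(S_n^0)\cup\{0\}$ under the semi-conjugacy $\Psi_n$, with all intermediate $f_0$-iterates inserted: by Lemma~\ref{lema-Cheraghi-2}(a),(b), $\Psi_n$ carries $f_n^{\circ j}(S_n^0)$ to $f_0^{\circ(jq_n+q_{n-1})}(S_0^n)$ for $k_n\le j\le b_n$ (and to a cell with $f_0$-index $\le k_nq_n+q_{n-1}$ for $j<k_n$), so that $\Psi_n(\Omega_n^0)\subset\Omega_0^n$. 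For part (a) the first ingredient is the base step $S_n^1:=\psi_{n+1}(S_{n+1}^0)\Subset\Int\Omega_n^0$. Since $\Phi_n\circ\psi_{n+1}=\chi_{n+1,0}\circ\Phi_{n+1}$ and $\chi_{n+1,0}(\widetilde{\MD}_{n+1})$ lies in the fixed vertical strip $\{1\le\re\zeta\le\kc_1+2,\ \im\zeta>-2\}$ by \eqref{equ-chi-n-0}, a strip which meets only a uniformly bounded number $J$ (independent of $n$) of the cells $\Phi_n(S_n^0)+j$, one gets $S_n^1\subset\bigcup_{j=0}^{J}f_n^{\circ j}(S_n^0)\subset\Omega_n^0$, and Lemma~\ref{lema:comp-inclu} (with $j=0$) supplies the $\delta_0$-collar of $\chi_{n+1,0}(\MD_{n+1})$ inside $\MD_n$ that upgrades this to compact containment; moreover the bound $|S_{n+1}^0|\le\diam U_{n+1}\ll\tfrac{4}{27}e^{4\pi}$ keeps $\im\chi_{n+1,0}$ a definite amount above $-2$. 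Pushing forward by $\Psi_n$—which is injective and open on $\MP_n$—and converting $f_n$-iterates to $f_0$-iterates gives $S_0^{n+1}=\Psi_n(S_n^1)\Subset\Int\Omega_0^n$, the finitely many $f_0$-indices involved being $\le b_nq_n+q_{n-1}$ because $b_n\ge\max(k_n,J+1)$ (use $b_n\ge a_{n+1}+1\ge N+1$ from \eqref{equ-b-n}).

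The second ingredient for (a) is that the remaining cells of $\Omega_0^{n+1}$—the $f_0$-orbit of $S_0^{n+1}$ for up to $b_{n+1}q_{n+1}+q_n$ steps—stay in $\Omega_0^n$. The key point here is \emph{not} a matching of cell counts (the level-$(n+1)$ cycle is much longer than the level-$n$ one), but rather that, transported up one level by $f_0^{\circ q_n}\circ\Psi_n=\Psi_n\circ f_n$ (valid on $\MP_n'$), this orbit is the $\Psi_n$-image of the $f_n$-orbit of $S_n^1$, which sweeps through the petal $\MP_n$ and wraps around $0$ in exactly the manner of the cells of $\Omega_n^0$; hence it remains inside $\Omega_n^0$, and the collar of Lemma~\ref{lema:comp-inclu} again makes the containment compact. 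This yields $\Omega_0^{n+1}\Subset\Int\Omega_0^n$. The inclusion $f_0(\Omega_0^{n+1})\subset\Omega_0^n$ is then a short separate check: applying $f_0$ only appends the single extra cell $f_0^{\circ(b_{n+1}q_{n+1}+q_n+1)}(S_0^{n+1})=\Psi_{n+1}(f_{n+1}^{\circ(b_{n+1}+1)}(S_{n+1}^0))$, and $f_{n+1}^{\circ(b_{n+1}+1)}(S_{n+1}^0)\subset\MP_{n+1}$ is already covered by the base step.

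For part (b), assume $\alpha\in\MB_N$, so $\Delta_{f_n}$ exists at every level and $\Psi_n$ conjugates $f_n|_{\Delta_{f_n}}$ to $f_0^{\circ q_n}$, whence $\Psi_n(\Delta_{f_n})\subset\Delta_{f_0}$. The inclusion ``$\supset$'' comes from choosing $\eta_{\alpha_n}$ in \eqref{defi-C-f-alpha} large enough that $\MC_n^\sharp\Subset\Delta_{f_n}$—possible by the upper bounds on the conformal radius / Fatou coordinate of maps in $\IS_{\alpha_n}$ of \cite{Che16}—so that $\MC_n\cup\MC_n^\sharp$ is a ``full radial strip'' running from the critical value $\cv$ (outside $\Delta_{f_n}$) all the way down to $0$: then $S_n^0$ and its first $b_n\asymp 1/\alpha_n$ forward iterates under $f_n$ (which cover more than a full angular turn of $\Delta_{f_n}$, since $\alpha_n b_n=1-\alpha_n\alpha_{n+1}+\alpha_n(k_n-\kc-2)>1$) fill a neighbourhood of $0$ in $\overline{\Delta_{f_n}}$; transporting by $\Psi_n$, together with $\bigcup_n\Psi_n(\Delta_{f_n})=\Delta_{f_0}$ and the fact that the first $b_nq_n+q_{n-1}\asymp q_{n+1}$ postcritical points of $f_0$ become $\asymp 1/q_{n+1}$-dense on $\partial\Delta_{f_0}$, yields $\overline{\Delta_{f_0}}\subset\Omega_0^n$ for all $n$, hence $\Delta_{f_0}=\Int\overline{\Delta_{f_0}}\subset\Int\bigl(\bigcap_n\Omega_0^n\bigr)$. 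For ``$\subset$'', the part of $\Omega_0^n$ lying outside $\overline{\Delta_{f_0}}$ is a tube around the first $\asymp q_{n+1}$ postcritical points whose transverse diameter is comparable to $\diam\Psi_n(\MC_n\cup\MC_n^\sharp)\to 0$ as $n\to\infty$ (again by the size estimates of \cite{Che16} and the contraction built into Lemma~\ref{lema:comp-inclu}), so $\bigcap_n\Omega_0^n\subset\overline{\Delta_{f_0}}$; taking interiors and using that $\partial\Delta_{f_0}$ has empty interior gives $\Int\bigl(\bigcap_n\Omega_0^n\bigr)\subset\Delta_{f_0}$.

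\emph{Main obstacle.} The delicate point in (a) is the ``invariance of the tube'': because one full cycle of the level-$(n+1)$ dynamics corresponds to many cycles of the level-$n$ dynamics, one cannot simply match cells and must instead follow the orbit of $S_0^{n+1}$ as it traverses $\MP_n$ and repeatedly wraps around the fixed point, doing so with a collar that survives the composition of the $n$ maps $\psi_1,\dots,\psi_n$—this is precisely what the uniform $\delta_0$ of Lemma~\ref{lema:comp-inclu} is for, and the index arithmetic (the relations $q_{n+1}=a_{n+1}q_n+q_{n-1}$, $b_n\ge a_{n+1}+1$, $k_n\le\kc_0$, and which cells of $\Omega_n^0$ contain which $\chi_{n+1,0}$-translates) must be carried out carefully. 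In (b) the comparable difficulty is quantitative: one needs the two statements about $\MC_n^\sharp$—that it sits a definite amount inside $\Delta_{f_n}$ while its image $\Psi_n(\MC_n\cup\MC_n^\sharp)$ shrinks to a point—to be uniform in $n$, which is exactly where the estimates of \cite{Che16} on the size of the Siegel disks in the Inou--Shishikura class become indispensable.
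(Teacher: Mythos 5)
The paper gives no proof of this proposition; it is quoted directly from \cite[Propositions~3.5 and 5.10(2)]{Che16}, so there is no in-paper argument to measure your sketch against. Your overall plan — reduce via the $\Psi_n$-dictionary of Lemma~\ref{lema-Cheraghi-2} to the one-step claim $S_n^1\Subset\Int\Omega_n^0$, supply the compact containment from the uniform $\delta_0$-collar of Lemma~\ref{lema:comp-inclu}, and in part (b) combine $\Psi_n(\Delta_{f_n})\subset\Delta_{f_0}$ with Cheraghi's uniform disk-size estimates and the shrinking of $\Psi_n(\MC_n\cup\MC_n^\sharp)$ — is the right skeleton and is very likely the route Cheraghi takes.

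Two steps, however, do not hold up as written. First, the claim that the strip $\{1\le\re\zeta\le\kc_1+2\}$ ``meets only a uniformly bounded number of the cells $\Phi_n(S_n^0)+j$'' is false: since $f_n^{\circ k_n}(S_n^0)=\MC_n\cup\MC_n^\sharp$ sits at $\re\Phi_n\in[\tfrac12,\tfrac32]$ and $\Phi_n(\MP_n)$ ends at $\re\zeta=\lfloor\tfrac{1}{\alpha_n}\rfloor-\kc$, one has $\re\Phi_n(S_n^0)\in\bigl(\lfloor\tfrac{1}{\alpha_n}\rfloor-\kc-k_n,\,\lfloor\tfrac{1}{\alpha_n}\rfloor-\kc-\tfrac12\bigr)$, so every $\Phi_n(S_n^0)+j$ with $j\ge 0$ has $\re\zeta\ge N-2\kc_0$ and misses the strip entirely. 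The conclusion $S_n^1\subset\bigcup_{j\le J}f_n^{\circ j}(S_n^0)$ is still true, but for a different reason: the extended $\Phi_n^{-1}$ of Lemma~\ref{lema:Phi-inverse} wraps, sending $\Phi_n(S_n^0)+k_n$ back to $\MC_n\cup\MC_n^\sharp$, so it is the \emph{dynamical} cells $f_n^{\circ j}(S_n^0)$ with $j\in\{k_n,\dots,k_n+\kc_1+1\}$ that cover $\Phi_n^{-1}$ of the strip; your stated justification conflates the $\zeta$-plane cells with the $z$-plane cells. Second — and this is the crux — the ``invariance of the tube'' paragraph is an assertion where a proof is needed: the $f_n$-orbit of $S_n^1$ wraps around $0$ roughly $a_{n+2}$ times, so the $f_0$-indices appearing in $\Omega_0^{n+1}$ run to $b_{n+1}q_{n+1}+q_n\asymp q_{n+2}$, far beyond $b_nq_n+q_{n-1}$, and the orbit cannot be placed inside $\Omega_0^n$ by matching cell indices. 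One must show that after each angular wrap the orbit re-enters $\Omega_0^n$ with a collar that persists for the next $\asymp q_{n+1}$ steps; writing ``hence it remains inside $\Omega_n^0$'' is not such an argument. The two quantitative claims in (b) (the identity $\bigcup_n\Psi_n(\Delta_{f_n})=\Delta_{f_0}$, which is not even obviously true as stated, and the $\asymp q_{n+1}^{-1}$-density of the postcritical tube on $\partial\Delta_{f_0}$) are similarly asserted rather than established.
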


In the rest of this paper, unless otherwise stated, for a given map $f_0\in\IS_\alpha\cup\{Q_\alpha\}$ with $\alpha\in\HT_N$, we use $f_n$ to denote the map after $n$-th near-parabolic renormalization. We also use $U_n$, $\MP_n$ and $\Phi_n$ etc to denote the domain of definition, the perturbed petal and the Fatou coordinate etc of $f_n$ respectively.

\section{The suitable heights}\label{sec-height}

\subsection{Radii of Siegel disks}

The following classical distortion theorem can be found in \cite[Theorem 1.6, p.\,21]{Pom75}.

\begin{thm}[{Koebe's distortion theorem}]\label{thm-Koebe}
Suppose $f:\D\to\C$ is a univalent map with $f(0)=0$ and $f'(0)=1$. Then for each $z\in\D$ we have
\begin{enumerate}
\item $\tfrac{1-|z|}{(1+|z|)^3}\leqslant|f'(z)|\leqslant\tfrac{1+|z|}{(1-|z|)^3}$;
\item $\tfrac{|z|}{(1+|z|)^2}\leqslant|f(z)|\leqslant\tfrac{|z|}{(1-|z|)^2}$; and
\item $|\arg f'(z)|\leqslant 2\log\tfrac{1+|z|}{1-|z|}$.
\end{enumerate}
\end{thm}

Let $\alpha_0:=\alpha\in \MB_N$ and $\alpha_n\in(0,1)$ be the number defined inductively as in \eqref{equ-gauss} for $n\geqslant 1$. Denote $\beta_{-1}=1$ and $\beta_n:=\prod_{i=0}^n\alpha_i$ for $n\geqslant 0$. The \emph{Brjuno sum} $\MB(\alpha)$ of $\alpha$ in the sense of Yoccoz is defined as
\begin{equation}\label{equ-Brjuno-sum-Yoccoz}
\MB(\alpha):=\sum_{n=0}^{+\infty}\beta_{n-1}\log\frac{1}{\alpha_n}=\log\frac{1}{\alpha_0}+\alpha_0\log\frac{1}{\alpha_1}+\alpha_0\alpha_1\log\frac{1}{\alpha_2}+\cdots.
\end{equation}
It is proved in \cite[\S 1.5]{Yoc95} that $|\MB(\alpha)-\sum_{n=0}^\infty q_n^{-1}\log q_{n+1}|\leqslant C'$ for a universal constant $C'>0$.

Suppose a holomorphic map $f$ has a Siegel disk $\Delta_f$ centered at the origin which is compactly contained in the domain of definition of $f$. The \emph{inner radius} of $\Delta_f$ is the radius of the largest open disk centered at the origin that is contained in $\Delta_f$.

\begin{lem}\label{lema-fixed-disk}
There exists a universal constant $D_7>1$ such that for all $f_0\in\IS_\alpha\cup\{Q_\alpha\}$ with $\alpha\in \MB_N$, the inner radius of the Siegel disk of $f_n$ is $c_ne^{-\MB(\alpha_n)}$ with $1/D_7 \leqslant c_n\leqslant D_7$ for every $n\in\N$.
\end{lem}

\begin{proof}
By the definition of near-parabolic renormalization, it follows that $f_n\in\IS_{\alpha_n}$ with $\alpha_n\in \MB_N$ for all $n\geqslant 1$. Then according to \cite{Brj71}, each $f_n$ with $n\geqslant 0$ has a Siegel disk centered at the origin. By the definition of Inou-Shishikura's class and Koebe's distortion theorem (Theorem \ref{thm-Koebe}(b)), $f_n$ is univalent in $\D(0,\widetilde{c})$ for a universal constant $\widetilde{c}>0$. According to Yoccoz \cite[p.\,21]{Yoc95}, the Siegel disk of $f_n$ contains a round disk $\D(0, C_1 e^{-\MB(\alpha_n)})$ for a universal constant $C_1>0$, where
\begin{equation}\label{equ:Brj-Yoccoz-n}
\MB(\alpha_n):=\log\frac{1}{\alpha_n}+\sum_{k=1}^{+\infty}\alpha_n\cdots\alpha_{n+k-1}\log\frac{1}{\alpha_{n+k}}
\end{equation}
is the Brjuno sum of $\alpha_n$ defined in \eqref{equ-Brjuno-sum-Yoccoz}.
On the other hand, by \cite[Theorem G]{Che19}, there is a universal constant $C_2>1$ such that the inner radius of the Siegel disk of $f_n$ is bounded above by $C_2 e^{-\MB(\alpha_n)}$ for all $n\in\N$. The lemma follows if we set $D_7:=\max\{C_2,1/C_1\}$.
\end{proof}

\subsection{Definition of the heights}

In the following, we use $\Delta_n$ to denote the Siegel disk of $f_n$ for all $n\geqslant 0$, where $f_0\in\IS_\alpha\cup\{Q_\alpha\}$ with $\alpha\in \MB_N$ and $f_n$ is obtained by applying the near-parabolic renormalization operator.

\begin{defi}[{The heights}]
Let $M\geqslant 1$. For $n\geqslant 0$, we define
\begin{equation}\label{defi-eta}
h_n:=\frac{\MB(\alpha_{n+1})}{2\pi}+\frac{M}{\alpha_n}.
\end{equation}
\end{defi}

There are many choices of the height $h_n$. One of the candidates is $\tfrac{\MB(\alpha_{n+1})}{2\pi}+M$. In order to apply Lemma \ref{lema-key-estimate-lp}(a) directly, we choose $h_n$ above so that $h_n> 1/\alpha_n$.
Similar to \eqref{defi-C-f-alpha} (see Figure \ref{Fig_near-para-norm-defi}), we define
\begin{equation}
\widetilde{\MC}_n^\sharp:=\{z\in\MP_n:1/2\leqslant\re\Phi_n(z)\leqslant 3/2 \text{~and~} \im\Phi_n(z)\geqslant h_n\}.
\end{equation}
Let $(\widetilde{\MC}_n^\sharp)^{-k_n}$ be the component of $f_n^{-k_n}(\widetilde{\MC}_n^\sharp)$ contained in $(\MC_n^\sharp)^{-k_n}$. Recall that $\psi_n$ is defined in \eqref{equ-psi-n}. For $n\geqslant 0$ and $i\geqslant 1$, we denote
\begin{equation}
V_n^0:=(\widetilde{\MC}_n^\sharp)^{-k_n}\subset S_n^0 \text{\quad and\quad} V_n^i:=\psi_{n+1}\circ\cdots\circ\psi_{n+i}(V_{n+i}^0)\subset S_n^i.
\end{equation}

\begin{lem}\label{lema-Jordan-domain}
There exists a universal constant $M_1\geqslant 1$ such that if $M\geqslant M_1$, then for all $n\geqslant 0$ and $i\geqslant 0$, $V_n^i$ is compactly contained in $\Delta_n$.
\end{lem}

\begin{proof}
We first prove that $V_n^0$ is compactly contained in $\Delta_n$ for all $n\geqslant 0$ if $M\geqslant 1$ is large enough. By a straightforward calculation, the image of $\Phi_n(\widetilde{\MC}_n^\sharp)$ under $\Expo$ is a punctured rounded disk centered at the origin with radius
\begin{equation}
\iota_n:=\frac{4}{27}e^{-2\pi h_n}=\frac{4}{27}e^{-\frac{2\pi M}{\alpha_n}}\cdot e^{-\MB(\alpha_{n+1})}<\frac{1}{D_7}e^{-\MB(\alpha_{n+1})}
\end{equation}
if $M\geqslant M_1:=\frac{1}{2\pi}\log D_7+1$, where $D_7> 1$ is the universal constant introduced in Lemma \ref{lema-fixed-disk}. This implies that $\Expo\circ\Phi_n(\widetilde{\MC}_n^\sharp)$ is compactly contained in the Siegel disk of $f_{n+1}$ if $M\geqslant M_1$. Hence there exists a small open neighborhood $D$ of $\widetilde{\MC}_n^\sharp$ in $\MP_n$ such that $\Expo\circ\Phi_n(D)$ is compactly contained in the Siegel disk $\Delta_{n+1}$. By Lemma \ref{lema-Cheraghi-2}(c), it follows that $f_n$ can be iterated infinitely many times in $D$ and the orbit is compactly contained in the domain of definition of $f_n$. Note that $0$ is contained in $\overline{D}$. Therefore, $D$ is contained in the Siegel disk of $f_n$ and $\widetilde{\MC}_n^\sharp\Subset\Delta_n$. Since $f_n^{\circ k_n}(V_n^0)=\widetilde{\MC}_n^\sharp$ and $0\in\partial V_n^0$, we have $V_n^0\Subset\Delta_n$.

For each $z\in V_n^0$, there exists a small open neighborhood of $z$ on which $f_n$ can be iterated infinitely many times. By Lemma \ref{lema-Cheraghi-2}(b), there exists a small open neighborhood of $\Psi_n(z)\in V_0^n$ on which $f_0$ can be also iterated infinitely many times. Since each $z\in V_n^0$ satisfies this property and $0\in\partial V_0^n$, it follows that $V_0^n\Subset\Delta_0$. By a completely similar argument, we have $V_n^i\Subset \Delta_n$ for any $i> 0$ and $n> 0$.
\end{proof}

Note that the forward orbit of $V_n^i$ is compactly contained in $\Delta_n$ for any $n\geqslant 0$ and $i\geqslant 0$. Moreover, the backward orbit of $V_n^i$ is also compactly contained in $\Delta_n$ if the preimage under $f_n$ is chosen in $\Delta_n$.
In the following, we always assume that $M\geqslant M_1$ unless otherwise stated.

\subsection{The location of the neighborhoods}\label{subsec-loc-neig}

For $n\geqslant 0$, each $V_n^0\cup\{0\}$ is a closed topological triangle\footnote{Here we use the fact that for any $x\in (0,\lfloor\tfrac{1}{\alpha_n}\rfloor-\kc)$, $\lim_{y\to+\infty}\Phi_n^{-1}(x+y\ii)=0$. See \cite[Proposition 2.4(a)]{CS15} or \cite[Lemma 6.9]{Che19}.} whose boundary consists of three analytic curves. We use $\partial^l V_n^0$, $\partial^r V_n^0$ and $\partial^b V_n^0$ to denote the three smooth edges of $V_n^0$, where $f_n(\partial^l V_n^0)=\partial^r V_n^0$ and $\partial^l V_n^0\cap \partial^r V_n^0=\{0\}$. The superscripts `$l$', `$r$' and `$b$' denote `left', `right' and `bottom', respectively. See Figure \ref{Fig_Chessboard-n-v-0}.

\begin{figure}[!htpb]
  \setlength{\unitlength}{1mm}
  \setlength{\fboxsep}{0pt}
  \centering
  \fbox{\includegraphics[width=0.8\textwidth]{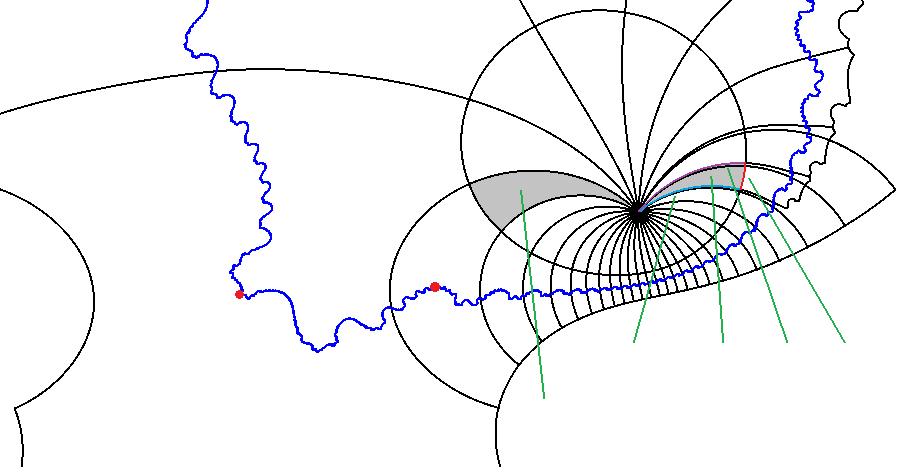}}
  \put(-25,9.5){$V_n^0$}
  \put(-36,9.5){$\partial^l V_n^0$}
  \put(-18,9.5){$\partial^r V_n^0$}
  \put(-8.5,9.5){$I_n^0$}
  \put(-43,2.7){$\widetilde{\MC}_n^\sharp$}
  \put(-78.5,15.5){$\cp_{f_n}$}
  \put(-55.5,16){$\cv$}
  \caption{In the dynamical plane of $f_n$, the sets $\partial^l V_n^0$, $\partial^r V_n^0$ and $I_n^0$ are colored cyan, purple and red respectively. The blue set depicts the (partial) forward orbit of the critical point $\cp_{f_n}$. The sets $V_n^0$ and $\widetilde{\MC}_n^\sharp=f_n^{\circ k_n}(V_n^0)$ are colored gray.}
  \label{Fig_Chessboard-n-v-0}
\end{figure}

Similar naming convention is adopted to $V_n^i$ and their forward images for all $n\geqslant 0$ and $i\geqslant 0$. For example, $\partial^l V_n^i:=\psi_{n+1}\circ\cdots\circ\psi_{n+i}(\partial^l V_{n+i}^0)$ if $i$ is even while $\partial^l V_n^i:=\psi_{n+1}\circ\cdots\circ\psi_{n+i}(\partial^r V_{n+i}^0)$ if $i$ is odd (note that each $\psi_j$ is anti-holomorphic). For simplicity, we denote the segment
\begin{equation}\label{eq-I-n}
I_n^0:=\partial^b V_n^0\subset\Delta_n.
\end{equation}
The `left' and the `right' end points of $I_n^0$ are denoted by $\partial^l I_n^0$ and $\partial^r I_n^0$ respectively so that $f_n(\partial^l I_n^0)=\partial^r I_n^0$. Similar naming convention is adopted to $I_n^i$ and their forward images for all $n\geqslant 0$ and $i\geqslant 0$. In particular, by Lemma \ref{lema-Cheraghi-2}(a) we have $f_0^{\circ q_n}(\partial^l I_0^n)=\partial^r I_0^n$ if $n$ is even and $f_0^{\circ q_n}(\partial^r I_0^n)=\partial^l I_0^n$ if $n$ is odd.
Moreover, let $\partial^l S_n^i$ and $\partial^r S_n^i$ be the smooth edges of $S_n^i$ containing $\partial^l V_n^i$ and $\partial^r V_n^i$ respectively.

Let $k_n=k_{f_n}\geqslant 1$ be the integer introduced in Proposition \ref{prop-CC-2}, $D_3>0$ be a constant introduced in Lemma \ref{lema-key-estimate-lp} and $\MD_n=\MD_{f_n}$ be the set defined in \eqref{equ-MD-f}.

\begin{lem}[{see Figure \ref{Fig_W-n}}]\label{lema-stru-R-n}
There exists a constant $M_2\geqslant 1$ such that if $M\geqslant M_2$, then for all $n\in\N$, we have
\begin{enumerate}
\item $\diam(\Phi_n(I_n^0))\leqslant 2$ and $|\im\zeta- h_n|\leqslant 1$ for all $\zeta\in\Phi_n(I_n^0)$;
\item For all $y\geqslant  h_n-1$, $u_n(y):=\{\zeta\in\C:\im\zeta=y\}\cap\Phi_n(\partial^l S_n^0)$ is a singleton;
\item $\diam(\beta_n')\leqslant 1$, where $\beta_n'$ is the arc in $\Phi_n(\partial^l S_n^0)$ connecting $u_n( h_n)$ with $\Phi_n(\partial^l I_n^0)$.
\end{enumerate}
\end{lem}

\begin{proof}
The proof is mainly based on applying Koebe's distortion theorem and the definition of near-parabolic renormalization.

\medskip
(a) By the definition of near-parabolic renormalization, we have
\begin{equation}
f_{n+1}(\Expo\circ\Phi_n(V_n^0))=\Expo\circ\Phi_n(\widetilde{\MC}_n^\sharp).
\end{equation}
Note that $\Expo\circ\Phi_n(\widetilde{\MC}_n^\sharp)\cup\{0\}$ is a closed round disk with radius
\begin{equation}
\iota_n=\frac{4}{27}e^{-\frac{2\pi M}{\alpha_n}}\cdot e^{-\MB(\alpha_{n+1})}.
\end{equation}
By Lemma \ref{lema-fixed-disk}, $\Delta_{n+1}$ contains the disk $\D(0,\varsigma_n)$, where
\begin{equation}
\varsigma_n:=D_7^{-1} e^{-\MB(\alpha_{n+1})}.
\end{equation}
Therefore,
\begin{equation}\label{equ-inv-f-g}
g:=f_{n+1}^{-1}:\D(0,\varsigma_n)\to\Delta_{n+1}
\end{equation}
is a well-defined univalent map with $|g'(0)|=1$. If $M$ is large enough such that $\iota_n$ is much smaller than $\varsigma_n$, then by Theorem \ref{thm-Koebe} the distortion of the circle $g(\partial\D(0,\iota_n))$ relative to $\partial\D(0,\iota_n)$ can be arbitrarily small. Part (a) is proved if we notice that $\Phi_n(I_n^0)$ is the closure of a connected component of $\Expo^{-1}\circ g(\partial\D(0,\iota_n)\setminus\{\iota_n\})$.

\medskip
(b) Still by the definition of near-parabolic renormalization, we have
\begin{equation}
f_{n+1}(\Expo\circ\Phi_n(\partial^l S_n^0))=(0,\tfrac{4}{27}e^{4\pi}].
\end{equation}
Since $\D(0,\varsigma_n)\subset\Delta_{n+1}$, we have $f_{n+1}^{-1}([0,\tfrac{4}{27}e^{4\pi}])\cap g(\D(0,\varsigma_n))=g([0,\varsigma_n))$, where $g$ is defined in \eqref{equ-inv-f-g}. On the other hand, by \eqref{equ-inv-f-g} and Theorem \ref{thm-Koebe}(b), we assume that $M$ is large such that $\iota_n$ is small and $g(\D(0,\varsigma_n))\supset\overline{\D}(0,e^{2\pi}\iota_n)$. According to Theorem \ref{thm-Koebe}(c), we assume further that $M$ is large such that $g([0,\varsigma_n))\cap \partial\D(0,r)$ is a singleton for any $0<r\leqslant e^{2\pi}\iota_n$. Therefore,
\begin{equation}
\Expo\circ\Phi_n(\partial^l S_n^0)\cap\{z\in\C:|z|=r\}
\end{equation}
is a singleton, where $0<r\leqslant e^{2\pi}\iota_n=\tfrac{4}{27}e^{-2\pi( h_n-1)}$. This proves Part (b).

\medskip
(c) By the definition of near-parabolic renormalization, we have
\begin{equation}
\Expo(u_n( h_n))=g([0,\varsigma_n))\cap\partial\D(0,\iota_n) \text{\quad and\quad} \Expo\circ\Phi_n(\partial^l I_n^0)=g(\iota_n).
\end{equation}
Moreover, by the definition of $\beta_n'$ we have $\Expo(\beta_n')\subset g([0,\varsigma_n))$. By Theorem \ref{thm-Koebe}, the Euclidean length of the arc $\Expo(\beta_n')$ with end points $g([0,\varsigma_n))\cap\partial\D(0,\iota_n)$ and $g(\iota_n)$ can be arbitrarily small if $M$ is large enough. This proves Part (c).
\end{proof}

Let $D_3>0$ be introduced in Lemma \ref{lema-key-estimate-lp}.
In the following we always assume that $M\geqslant\max\{M_2,D_3+\tfrac{1}{2\pi}\log\tfrac{4D_7}{27}+2\}$ unless otherwise stated. Then
\begin{equation}\label{equ-y-n-alpha}
y_n:=\frac{\MB(\alpha_{n+1})}{2\pi}+M-D_3-\frac{3}{2} > \frac{\MB(\alpha_{n+1})}{2\pi}-\frac{1}{2\pi}\log\frac{27 c_{n+1}}{4}.
\end{equation}
This implies that if $\im\zeta\geqslant y_n$, then $\zeta\in\Expo^{-1}(\Delta_{n+1})$.

\section{The sequence of the curves is convergent}\label{sec-convergent}

In this section, we define a sequence of continuous curves $(\gamma_n^i)_{n\in\N}$ in the Fatou coordinate planes with $i\in\N$. The image of each $\gamma_n^i$ under $\Phi_n^{-1}$ is a continuous closed curve contained in the Siegel disk $\Delta_n$ of $f_n$. We shall prove that $(\gamma_0^n)_{n\in\N}$ convergents uniformly to the boundary of $\Delta_0$.

\subsection{Definition of the curves and its parametrization}

For each $n\in\N$, note that $a_{n+1}=\lfloor\tfrac{1}{\alpha_n}\rfloor$. Recall that
\begin{equation}
u_n:=u_n( h_n)=\{\zeta\in\C:\im\zeta= h_n\}\cap\Phi_n(\partial^l S_n^0)
\end{equation}
is introduced in Lemma \ref{lema-stru-R-n}(b). Since $f_n^{\circ k_n}(S_n^0)=\MC_n\cup\MC_n^\sharp$, we have $\re\zeta>a_{n+1}-\kc$ for all $\zeta\in\Phi_n(S_n^0)+k_n$. Therefore, we have
\begin{equation}\label{equ-u-n-domain}
a_{n+1}-\kc-k_n< \re u_n< a_{n+1}-\kc-\tfrac{3}{2}.
\end{equation}
We denote
\begin{equation}
u_n':=a_{n+1}-\kc-k_n-\tfrac{1}{2}+ h_n\ii.
\end{equation}
According to \eqref{equ-u-n-domain}, we have $\re u_n'<\re u_n$.
Denote
\begin{equation}
u_n'':=\Phi_n(\partial^l I_n^0).
\end{equation}
Let $\beta_n'$ be the arc in $\Phi_n(\partial^l S_n^0)$ connecting $u_n$ with $u_n''$. See Figure \ref{Fig_W-n}.
We first give the definitions of two curves $\gamma_n^0(t)$ and $\gamma_n^1(t)$, where $t\in[0,1]$, and then define the curves $(\gamma_n^i(t))_{n\in\N}$ inductively.

\medskip
\textbf{Definition of $\gamma_n^0$}: The curve $\gamma_n^0(t):[0,1]\to\C$ is defined piecewise as following:
\begin{enumerate}
\item[(a$_0$)] For $t\in[0,1-\tfrac{\kc\,+k_n+1}{a_{n+1}}]$, define $\gamma_n^0(t):=a_{n+1}t+\tfrac{1}{2}+ h_n\ii$;
\item[(b$_0$)] Let $\gamma_n^0(t):[1-\tfrac{\kc\,+k_n+1}{a_{n+1}},1-\tfrac{k_n}{a_{n+1}}]\to [u_n',u_n]\cup\beta_n'$ be a homeomorphism such that
\begin{equation}
\gamma_n^0(1-\tfrac{\kc\,+k_n+1}{a_{n+1}})=u_n' \text{ and } \gamma_n^0(1-\tfrac{k_n}{a_{n+1}})=u_n'';
\end{equation}
\item[(c$_0$)] Let $\gamma_n^0(t):[1-\tfrac{k_n}{a_{n+1}},1-\tfrac{k_n-1}{a_{n+1}}]\to \Phi_n(I_n^0)$ be a homeomorphism such that
\begin{equation}
\gamma_n^0(1-\tfrac{k_n}{a_{n+1}})=u_n'' \text{ and }\gamma_n^0(1-\tfrac{k_n-1}{a_{n+1}})=u_n''+1;
\end{equation}
\item[(d$_0$)] For $t\in[1-\tfrac{k_n-j}{a_{n+1}},1-\tfrac{k_n-j-1}{a_{n+1}}]$ with $1\leqslant j\leqslant k_n-1$, define $\gamma_n^0(t):=\gamma_n^0(t-\tfrac{j}{a_{n+1}})+j$.
\end{enumerate}

\begin{figure}[!htpb]
  \setlength{\unitlength}{1mm}
  \centering
  \includegraphics[width=0.95\textwidth]{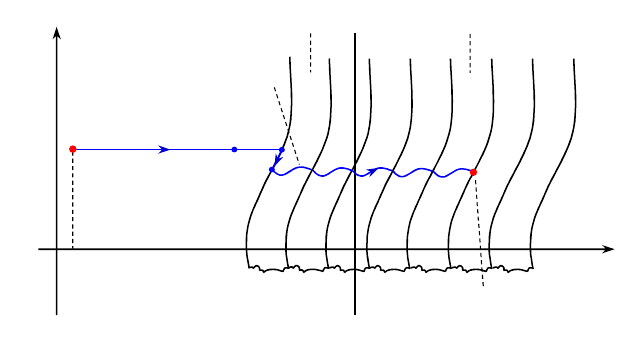}
  \put(-91,31){$\gamma_n^0$}
  \put(-77,37){$u_n'$}
  \put(-71,37){$u_n$}
  \put(-70,27){$u_n''$}
  \put(-32,6){$u_n''+k_n$}
  \put(-77,49){$\Phi_n(I_n^0)$}
  \put(-114,34){$ h_n$}
  \put(-113,12){$0$}
  \put(-108,12){$\tfrac{1}{2}$}
  \put(-52,7){$\lfloor\tfrac{1}{\alpha_n}\rfloor-\kc$}
  \put(-67,59){$\Phi_n(S_n^0)$}
  \put(-45,59){$\Phi_n(S_n^0)+k_n-1$}
  \caption{The sketch of the construction of the continuous curve $\gamma_n^0$ (in blue) in the Fatou coordinate plane of $f_n$. The two red dots denote the initial and terminal points of $\gamma_n^0$ and they have the same image under the map $\Phi_n^{-1}$. In particular, $\Phi_n^{-1}(\gamma_n^0)$ is a continuous closed curve in the Siegel disk of $f_n$.}
  \label{Fig_W-n}
\end{figure}

\begin{lem}[{See Figure \ref{Fig_W-n}}]\label{lema-gamma-n-0}
The map $\gamma_n^0(t):[0,1]\to\C$ has the following properties:
\begin{enumerate}
\item $\gamma_n^0$ and $\gamma_n^0+1$ are simple arcs in $\MD_n$;
\item $\gamma_n^0(0)=\tfrac{1}{2}+ h_n\ii$ and $\gamma_n^0(1)=u_n''+k_n$;
\item $\Phi_n^{-1}(\gamma_n^0)$ is a continuous closed curve in $\Delta_n$; and
\item $|\im\gamma_n^0(t)- h_n|\leqslant 1$ for all $t\in[0,1]$.
\end{enumerate}
\end{lem}

\begin{proof}
Parts (a) and (b) follow from the definition of $\gamma_n^0$.
For Part (c), since $f_n^{\circ k_n}(\Phi_n^{-1}(u_n''))=\Phi_n^{-1}(\tfrac{1}{2}+ h_n\ii)$, we have $\Phi_n^{-1}(\tfrac{1}{2}+ h_n\ii)=\Phi_n^{-1}(u_n''+k_n)$ by Lemma \ref{lema:Phi-inverse}. This implies that $\Phi_n^{-1}(\gamma_n^0)$ is a continuous closed curve in $\Delta_n$.
Part (d) is an immediate consequence of Lemma \ref{lema-stru-R-n}(a)(c).
\end{proof}

Before introducing $\gamma_n^1$, we define a thickened curve $\widetilde{\gamma}_n^0(t):[0,1]\to\C$ of $\gamma_n^0$:
\begin{equation}\label{equ-gamma-tilde-n-0}
\widetilde{\gamma}_n^0(t):=
\left\{
\begin{aligned}
& \gamma_n^0\Big(\tfrac{a_{n+1}}{a_{n+1}-1}t\Big)    & ~~~\text{if } &  t\in[0,1-\tfrac{1}{a_{n+1}}],\\
& \gamma_n^0(t)+1 & ~~~\text{if } &  t\in (1-\tfrac{1}{a_{n+1}},1].
\end{aligned}
\right.
\end{equation}
One can see that $\widetilde{\gamma}_n^0=\gamma_n^0\cup (\gamma_n^0([1-\tfrac{1}{a_{n+1}},1])+1)=\gamma_n^0\cup (\Phi_n(I_n^0)+k_n)$ and $\widetilde{\gamma}_n^0(t):[0,1]\to\C$ is a continuous curve in $\MD_n$. Let $\chi_{n,0}:=\chi_{f_n,0}$ be the anti-holomorphic map defined in \eqref{equ-chi-choice}.

\medskip
\textbf{Definition of $\gamma_n^1$}: The curve $\gamma_n^1(t):[0,1]\to\C$ is defined piecewise as following:
\begin{enumerate}
\item[(a$_1$)] For $t\in[0,\tfrac{1}{a_{n+1}}]$, define $\gamma_n^1(t):=\chi_{n+1,0}\circ \widetilde{\gamma}_{n+1}^0(1-a_{n+1}t)$;
\item[(b$_1$)] For $t\in (\tfrac{j}{a_{n+1}},\tfrac{j+1}{a_{n+1}}]$, where $1\leqslant j\leqslant a_{n+1}-1$, define
\begin{equation}
\gamma_n^1(t):=\chi_{n+1,j}\circ \gamma_{n+1}^0(j+1-a_{n+1}t),
\end{equation}
where $\chi_{n+1,j}=\chi_{n+1,0}+j$ is defined in \eqref{eq:chi-n}.
\end{enumerate}

\medskip
Let $D_3>0$ be the constant introduced in Lemma \ref{lema-key-estimate-lp}.

\begin{lem}\label{lema-gamma-n-1}
The map $\gamma_n^1(t):[0,1]\to\C$ has the following properties:
\begin{enumerate}
\item $\gamma_n^1$ and $\gamma_n^1+1$ are continuous curves in $\MD_n$;
\item $\gamma_n^1(0)=\chi_{n+1,0}(\gamma_{n+1}^0(1)+1)$ and $\gamma_n^1(1)=\chi_{n+1,0}(\gamma_{n+1}^0(1))+a_{n+1}$;
\item $\Phi_n^{-1}(\gamma_n^1(0))=\Phi_n^{-1}(\gamma_n^1(1))$ and $\Phi_n^{-1}(\gamma_n^1)$ is a continuous closed curve in $\Delta_n$; and
\item There exists a constant $D_8>0$ which is independent of $n$ such that for all $t\in[0,1]$, $|\re\gamma_n^0(t)-\re\gamma_n^1(t)|\leqslant D_8$ and
$|\im\gamma_n^1(t)-\tfrac{\MB(\alpha_{n+1})}{2\pi}-M|\leqslant D_3+\tfrac{1}{2}$.
\end{enumerate}
\end{lem}

\begin{proof}
(a) Since $\chi_{n+1,j}$ is anti-holomorphic for all $j\in\Z$, we have
\begin{equation}
\chi_{n+1,j}(\gamma_{n+1}^0(0))=\chi_{n+1,j}(\gamma_{n+1}^0(1))+1=\chi_{n+1,j+1}(\gamma_{n+1}^0(1)),
\end{equation}
where $0\leqslant j\leqslant a_{n+1}-2$. Therefore, $\gamma_n^1(t):[0,1]\to\C$ is a continuous curve. By Lemma \ref{lema:comp-inclu}, $\gamma_n^1$ and $\gamma_n^1+1$ are continuous curves in $\MD_n$.

\medskip
(b) By the definition of $\gamma_n^1$, we have
\begin{equation}
\gamma_n^1(0)=\chi_{n+1,0}\circ \widetilde{\gamma}_{n+1}^0(1)=\chi_{n+1,0}(\gamma_{n+1}^0(1)+1)
\end{equation}
and
\begin{equation}
\begin{split}
\gamma_n^1(1)=&~\chi_{n+1,a_{n+1}-1}(\gamma_{n+1}^0(0)) \\
=&~\chi_{n+1,a_{n+1}-1}(\gamma_{n+1}^0(1))+1=\chi_{n+1,0}(\gamma_{n+1}^0(1))+a_{n+1}.
\end{split}
\end{equation}

\medskip
(c) By Lemma \ref{lema-Cheraghi-2}(a), we have
\begin{equation}
\Phi_n^{-1}\circ\chi_{n+1,0}(\gamma_{n+1}^0(1)+1)=f_n^{\circ a_{n+1}}(\Phi_n^{-1}\circ\chi_{n+1,0}(\gamma_{n+1}^0(1))).
\end{equation}
This implies that $\Phi_n^{-1}(\gamma_n^1(0))=\Phi_n^{-1}(\gamma_n^1(1))$ by Part (b). Therefore, $\Phi_n^{-1}(\gamma_n^1)$ is a continuous closed curve in $\Delta_n$.

\medskip
(d) By \eqref{equ-chi-n-0} we have
\begin{equation}\label{equ-chi-n-1}
\re\chi_{n+1,j}(\widetilde{\gamma}_{n+1}^0)\subset[1+j,\kc_1+2+j], \text{ where } j\in\Z.
\end{equation}
Hence for $t\in[0,1-\tfrac{\kc\,+k_n+1}{a_{n+1}}]$, we have
\begin{equation}
|\re\gamma_n^0(t)-\re\gamma_n^1(t)|\leqslant \kc_1+\tfrac{3}{2}.
\end{equation}
For $t\in[1-\tfrac{\kc\,+k_n+1}{a_{n+1}},1-\tfrac{k_n}{a_{n+1}}]$, by \eqref{equ-u-n-domain} and Lemma \ref{lema-stru-R-n}(c) we have
\begin{equation}
\re\gamma_n^0(t)\in[\re u_n'-\tfrac{1}{2},\re u_n+1]\subset[a_{n+1}-\kc\,-k_n-1,a_{n+1}-\kc-\tfrac{1}{2}].
\end{equation}
If $t\in[1-\tfrac{\kc\,+k_n+1}{a_{n+1}},1-\tfrac{k_n}{a_{n+1}}]$, then $\gamma_n^1(t)\in\bigcup_{i=0}^{\kc}\chi_{n+1,a_{n+1}-\kc-k_n-1+i}(\gamma_{n+1}^0)$. By \eqref{equ-chi-n-1} we have
\begin{equation}
\re\gamma_n^1(t)\in[a_{n+1}-k_n-\kc,\,a_{n+1}-k_n+\kc_1+1].
\end{equation}
Therefore, for $t\in[1-\tfrac{\kc\,+k_n+1}{a_{n+1}},1-\tfrac{k_n}{a_{n+1}}]$ we have
\begin{equation}
|\re\gamma_n^0(t)-\re\gamma_n^1(t)|\leqslant \max\{k_n-\tfrac{1}{2},\kc+\kc_1+2\}.
\end{equation}

By Lemma \ref{lema-stru-R-n}(a)(c), we have
\begin{equation}\label{equ-u-n-1}
u_n''\in\overline{\D}(u_n,1) \text{ and } \Phi_n(I_n^0)\subset \overline{\D}(u_n'',2).
\end{equation}
For $t\in [1-\tfrac{k_n}{a_{n+1}},1-\tfrac{k_n-1}{a_{n+1}}]$, by \eqref{equ-u-n-domain} and \eqref{equ-u-n-1} we have
\begin{equation}
\re\gamma_n^0(t)\in[a_{n+1}-\kc-k_n-3,a_{n+1}-\kc+\tfrac{3}{2}].
\end{equation}
On the other hand, we have
\begin{equation}
\re\gamma_n^1(t)\in[a_{n+1}-k_n+1,a_{n+1}-k_n+\kc_1+2].
\end{equation}
Since $\gamma_n^i(t+\tfrac{1}{a_{n+1}})=\gamma_n^i(t)+1$ for $t\in [1-\tfrac{k_n}{a_{n+1}},1-\tfrac{1}{a_{n+1}}]$, where $i=0,1$, it implies that for all $t\in[1-\tfrac{k_n}{a_{n+1}},1]$, we have
\begin{equation}
|\re\gamma_n^0(t)-\re\gamma_n^1(t)|\leqslant \max\{k_n-\kc+\tfrac{1}{2},\kc+\kc_1+5\}.
\end{equation}
Since $k_n\leqslant\kc_0$ by Proposition \ref{prop-CC-2}, it implies that $|\re\gamma_n^0(t)-\re\gamma_n^1(t)|\leqslant D_8:=\max\{\kc_0-\tfrac{1}{2},\kc+\kc_1+5\}$ for all $t\in[0,1]$. Finally, the statement on $\im\gamma_n^1(t)$ follows immediately from Lemma \ref{lema-key-estimate-lp}(a) and Lemma \ref{lema-gamma-n-0}(d).
\end{proof}

By \eqref{equ-y-n-alpha} and Lemma \ref{lema-gamma-n-1}(d), for any $t\in[0,1]$ and $\zeta\in\Expo^{-1}(\partial\Delta_{n+1})$, we have
\begin{equation}\label{equ-Siegel-below}
\im\gamma_n^1(t)\geqslant \frac{\MB(\alpha_{n+1})}{2\pi}+M-D_3-\frac{1}{2} >1+\im\zeta.
\end{equation}
For $\ell=1$, we define a thickened curve $\widetilde{\gamma}_n^\ell(t):[0,1]\to\C$ of $\gamma_n^\ell$:
\begin{equation}\label{equ-gam-tilde-n}
\widetilde{\gamma}_n^\ell(t):=
\left\{
\begin{aligned}
& \gamma_n^\ell\Big(\tfrac{a_{n+1}}{a_{n+1}-1}t\Big)    & ~~~\text{if } &  t\in[0,1-\tfrac{1}{a_{n+1}}],\\
& \gamma_n^\ell(t)+1 & ~~~\text{if } &  t\in (1-\tfrac{1}{a_{n+1}},1].
\end{aligned}
\right.
\end{equation}
One can see that $\widetilde{\gamma}_n^\ell=\gamma_n^\ell\cup (\gamma_n^\ell([1-\tfrac{1}{a_{n+1}},1])+1)=\gamma_n^\ell\cup\chi_{n+1,a_{n+1}}(\gamma_{n+1}^{\ell-1})$, and $\widetilde{\gamma}_n^\ell(t):[0,1]\to\C$ is a continuous curve in $\MD_n$.

\medskip
\textbf{Define $\gamma_n^i$ inductively}:  For all $n\in\N$ and $1\leqslant \ell\leqslant i$ with $i\geqslant 1$, we assume that the curves $\gamma_n^\ell(t):[0,1]\to\C$ and $\widetilde{\gamma}_n^\ell(t):[0,1]\to\C$ are defined and satisfy
\begin{enumerate}
\item[(a$_\ell$)] $\widetilde{\gamma}_n^\ell$ is defined as in \eqref{equ-gam-tilde-n};
\item[(b$_\ell$)] $\gamma_n^\ell(t):=\chi_{n+1,0}\circ \widetilde{\gamma}_{n+1}^{\ell-1}(1-a_{n+1}t)$ for $t\in [0,\tfrac{1}{a_{n+1}})$, and $\gamma_n^\ell(t):=\chi_{n+1,j}\circ \gamma_{n+1}^{\ell-1}(j+1-a_{n+1}t)$ for $t\in(\tfrac{j}{a_{n+1}},\tfrac{j+1}{a_{n+1}}]$ with $1\leqslant j\leqslant a_{n+1}-1$;
\item[(c$_\ell$)] $\gamma_n^\ell$ and $\gamma_n^\ell+1$ are continuous curves in $\MD_n$;
\item[(d$_\ell$)] $\gamma_n^\ell(0)=\chi_{n+1,0}(\gamma_{n+1}^{\ell-1}(1)+1)$ and $\gamma_n^\ell(1)=\chi_{n+1,0}(\gamma_{n+1}^{\ell-1}(1))+a_{n+1}$; and
\item[(e$_\ell$)] $\Phi_n^{-1}(\gamma_n^\ell(0))=\Phi_n^{-1}(\gamma_n^\ell(1))$ and $\Phi_n^{-1}(\gamma_n^\ell)$ is a continuous closed curve in $\Delta_n$.
\end{enumerate}

\medskip
Similar to the construction of $\gamma_n^i$, the curve $\gamma_n^{i+1}(t):[0,1]\to\C$ is defined as:
\begin{enumerate}
\item[(a$_{i+1}$)] For $t\in[0,\tfrac{1}{a_{n+1}}]$, define $\gamma_n^{i+1}(t):=\chi_{n+1,0}\circ \widetilde{\gamma}_{n+1}^i(1-a_{n+1}t)$;
\item[(b$_{i+1}$)] For $t\in (\tfrac{j}{a_{n+1}},\tfrac{j+1}{a_{n+1}}]$, where $1\leqslant j\leqslant a_{n+1}-1$, define
\begin{equation}\label{equ-defi-gam}
\gamma_n^{i+1}(t):=\chi_{n+1,j}\circ \gamma_{n+1}^i(j+1-a_{n+1}t).
\end{equation}
\end{enumerate}

\begin{lem}\label{lema-gamma-n-i}
The map $\gamma_n^{i+1}(t):[0,1]\to\C$ has the following properties:
\begin{enumerate}
\item $\gamma_n^{i+1}$ and $\gamma_n^{i+1}+1$ are continuous curves in $\MD_n$;
\item $\gamma_n^{i+1}(0)=\chi_{n+1,0}(\gamma_{n+1}^i(1)+1)$ and $\gamma_n^{i+1}(1)=\chi_{n+1,0}(\gamma_{n+1}^i(1))+a_{n+1}$;
\item $\Phi_n^{-1}(\gamma_n^{i+1}(0))=\Phi_n^{-1}(\gamma_n^{i+1}(1))$ and $\Phi_n^{-1}(\gamma_n^{i+1})$ is a continuous closed curve in $\Delta_n$.
\end{enumerate}
\end{lem}

The proof of Lemma \ref{lema-gamma-n-i} is completely similar to that of Lemma \ref{lema-gamma-n-1}. Moreover, one can define the thickened curve $\widetilde{\gamma}_n^{\ell}$ of $\gamma_n^{\ell}$  with $\ell=i+1$ as in \eqref{equ-gam-tilde-n} similarly.

\medskip
By the definition of $\widetilde{\gamma}_n^i$, we have

\begin{lem}\label{lema:gam-inverse}
For each $t_0\in[0,1]$, there exist two sequences $(t_n)_{n\in\N}$ with $t_n\in[0,1]$ and $(j_n)_{n\geqslant 1}$ with $0\leqslant j_n\leqslant a_n$, such that for all $n\geqslant 1$ and all $i\in\N$,
\begin{equation}
\widetilde{\gamma}_{n-1}^{i+1}(t_{n-1})=\chi_{n,j_n}\big(\widetilde{\gamma}_n^{i}(t_n)\big).
\end{equation}
\end{lem}

\subsection{The curves are convergent}\label{subsec-unif-cont}

Our main goal in this subsection is to prove:

\begin{prop}\label{prop-Cauchy-sequence}
There exists a constant $K>0$ such that for all $n\in\N$, we have
\begin{equation}\label{equ-Cauchy}
\sum_{i=0}^n\sup_{t\in[0,1]}|\gamma_0^i(t)-\gamma_0^{i+1}(t)|\leqslant K.
\end{equation}
In particular, the sequence of the continuous curves $(\gamma_0^n(t):[0,1]\to\C)_{n\in\N}$ converges uniformly as $n\to\infty$.
\end{prop}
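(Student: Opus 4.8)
The plan is to prove the summability bound \eqref{equ-Cauchy} with a constant $K$ independent of $n$; the uniform convergence of $(\gamma_0^n)$ then follows from the Cauchy criterion, and the limit $\gamma^\infty\colon[0,1]\to\C$ is continuous as a uniform limit of continuous maps. Since by \eqref{equ-gam-tilde-n} each $\gamma_0^i$ is, after the affine change of parameter $s\mapsto\tfrac{a_1-1}{a_1}s$ of $[0,1]$, obtained by restricting $\widetilde\gamma_0^i$, we have $\sup_{t\in[0,1]}|\gamma_0^{i+1}(t)-\gamma_0^i(t)|\le\sup_{t\in[0,1]}|\widetilde\gamma_0^{i+1}(t)-\widetilde\gamma_0^i(t)|$, so it suffices to bound $\sum_i\sup_t|\widetilde\gamma_0^{i+1}(t)-\widetilde\gamma_0^i(t)|$.

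First I would fix $t_0\in[0,1]$ and take the sequences $(t_n)_{n\in\N}$ and $(j_{n+1})_{n\in\N}$ furnished by Lemma \ref{lema:gam-inverse}; the crucial point is that these depend on $t_0$ only, not on the superscript of the curve. Iterating the identity $\widetilde\gamma_n^{i+1}(t_n)=\chi_{n+1,j_{n+1}}(\widetilde\gamma_{n+1}^i(t_{n+1}))$ down $i$ levels and setting $H_i:=\chi_{1,j_1}\circ\cdots\circ\chi_{i,j_i}\colon\MD_i\to\MD_0$, this yields
\begin{equation*}
\widetilde\gamma_0^{i+1}(t_0)-\widetilde\gamma_0^i(t_0)=H_i\big(\widetilde\gamma_i^1(t_i)\big)-H_i\big(\widetilde\gamma_i^0(t_i)\big),
\end{equation*}
so the increment of the level-$0$ curves is the $H_i$-image of the increment, at the matching parameter, between the two level-$i$ curves $\widetilde\gamma_i^1$ and $\widetilde\gamma_i^0$. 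By Lemmas \ref{lema-gamma-n-0}, \ref{lema-gamma-n-1} and \ref{lema-stru-R-n}, the points $\widetilde\gamma_i^1(t_i)$ and $\widetilde\gamma_i^0(t_i)$ lie in $\MD_i$ with real parts within a universal constant of each other, with $\widetilde\gamma_i^0(t_i)$ at height $\eta_i=\tfrac{1}{2\pi}\MB(\alpha_{i+1})+M/\alpha_i$ and $\widetilde\gamma_i^1(t_i)$ at height $\tfrac{1}{2\pi}\MB(\alpha_{i+1})+M+O(1)$; thus the Euclidean size of the level-$i$ increment is comparable to $M/\alpha_i$ and, crucially, grows with $i$, whereas the two points can be joined inside $\MD_i$ by a path of $\MD_i$-hyperbolic length only $O(\log\tfrac{1}{\alpha_i})$, routing through the middle of the strip $\Phi_i(\MP_i)$, where the hyperbolic metric is comparable to $\alpha_i$.

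The core of the argument is to show that $H_i$ contracts this increment enough that the resulting bounds are summable. Each factor $\chi_{k,j_k}\colon\MD_k\to\MD_{k-1}$ has, by \eqref{equ-chi-n-0} and Lemma \ref{lema:comp-inclu}, an image which is a subregion of $\MD_{k-1}$ of bounded width lying at distance at least $\delta_0$ from $\partial\MD_{k-1}$, hence it contracts the hyperbolic metric by a definite amount (Lemma \ref{lema:exp-conv}); combining the composed contraction with the sharp estimates for the anti-holomorphic branches $\chi_f$ high up and near the ends of the petals (Lemma \ref{lema-key-estimate-lp} and Proposition \ref{prop-key-estimate-yyy}), and transferring the level-$i$ bound down the tower (Lemma \ref{lema-go-up}), one obtains $\sup_t|\widetilde\gamma_0^{i+1}(t)-\widetilde\gamma_0^i(t)|\le\varepsilon_i$ with $\varepsilon_i$ bounded by a universal multiple of $\beta_{i-1}\log\tfrac{1}{\alpha_i}$ up to lower-order terms; since $\beta_{i-1}\asymp q_i^{-1}$, summing over $i$ and comparing with the Brjuno sum \eqref{equ-Brjuno-sum-Yoccoz} of $\alpha\in\MB_N$ produces a finite $K$, which also explains why the convergence need not be geometric. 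I expect the main obstacle to be precisely this last step: making the contraction of $H_i$ beat the growth $\sim M/\alpha_i$ of the level-$i$ increment \emph{uniformly} in $t$ and $i$, especially near the ends of $\gamma_i^0$ and $\gamma_i^1$ where the hyperbolic geometry of $\MD_i$ degenerates, so that a soft Schwarz--Pick estimate does not suffice and the quantitative control of \S\ref{subsec-esti-2} together with Lemma \ref{lema-go-up} is indispensable; this is also the step where the Brjuno hypothesis on $\alpha$ genuinely enters.
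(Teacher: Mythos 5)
Your reduction to $\widetilde\gamma_0^i$, the use of Lemma \ref{lema:gam-inverse} to express the level-$0$ increment as $H_i\bigl(\widetilde\gamma_i^1(t_i)\bigr)-H_i\bigl(\widetilde\gamma_i^0(t_i)\bigr)$, and the observation that the two level-$i$ points lie at hyperbolic $\MD_i'$-distance $O(\log\tfrac1{\alpha_i})$ despite Euclidean separation $\approx M/\alpha_i$, are all in the spirit of the paper's proof. But the step where you claim a \emph{term-by-term} bound $\varepsilon_i\lesssim\beta_{i-1}\log\tfrac1{\alpha_i}$ is not justified and, as stated, the mechanism you invoke cannot produce it.

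The issue is quantitative. Lemma \ref{lema:exp-conv} (through Lemma \ref{lema-uni-con-prep}) gives a per-level hyperbolic contraction factor $\lambda=\tfrac{2A'+\delta_0}{2A'+2\delta_0}$ with $A'\le\kc_1$; this $\lambda\in(0,1)$ is a \emph{universal} constant of the class $\IS_0$ and does not shrink as $N$ grows. Once $N$ is large, $\lambda > 1/N\ge\alpha_k$ for every $k$, so $\lambda^i\gg\beta_{i-1}$. Consequently the hyperbolic argument only yields $\varepsilon_i\lesssim\lambda^i\bigl(\log\tfrac1{\alpha_i}+M\bigr)$, and there exist Brjuno numbers of type $\HT_N$ (e.g.\ with $\log\tfrac1{\alpha_k}\asymp c^k$ for a fixed $c>1/\lambda$; such $\alpha$ satisfy $\sum_k\beta_{k-1}\log\tfrac1{\alpha_k}<\infty$ because $\beta_{k-1}$ decays superexponentially) for which $\sum_i\lambda^i\log\tfrac1{\alpha_i}$ diverges. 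The quantitative estimates you cite do not rescue this: Proposition \ref{prop-key-estimate-yyy}(a) gives $|\chi_f'|\lesssim\alpha$ only in the regime $\im\zeta\gtrsim 1/\alpha$, whereas after a single renormalization step the image segment sits at heights of order $\tfrac1{2\pi}\MB(\alpha_i)$ which can be well below $1/\alpha_{i-1}$; and Lemma \ref{lema-go-up} does not give a per-piece bound of size $O(\alpha_i)$ but rather a bound of size $\tfrac1{2\pi}\MB(\alpha_i)+K_1$ for the whole initial portion of $\Xi_i^{n-i}$ — a quantity that is itself of the order of $\eta_{i-1}$ and therefore cannot be a summand in a geometric series.

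The paper circumvents this by refusing to bound the increments individually. It bounds the \emph{entire} sum $\sum_{i=0}^n|\widetilde\gamma_0^i(t_0^i)-\widetilde\gamma_0^{i+1}(t_0^i)|\le\Len(\Xi_0^n)$ (for arbitrary parameters) via an inductive split of $\Xi_i^{n-i}$ into one long piece with Euclidean length $\le\eta_i-D_6'$ plus tails of geometrically decaying hyperbolic length. The role of Lemma \ref{lema-go-up} is then a \emph{telescoping cancellation}: its upper bound $\tfrac1{2\pi}\MB(\alpha_i)+K_1$ for the image of the long piece combines with the freshly added segment $\xi_{i-1}^0$, whose Euclidean length is $\approx\eta_{i-1}-\tfrac1{2\pi}\MB(\alpha_i)-M$, and the two Brjuno terms cancel to leave $\eta_{i-1}$ plus a bounded excess; only this bounded excess feeds the geometric series $\sum K_2\nu^\ell$ governed by Lemma \ref{lema:exp-conv}. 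This cancellation is intrinsically a property of the sum and cannot be recovered from a bound on $\varepsilon_i$ alone. To repair your argument you would need to abandon the term-by-term strategy and implement the collective Euclidean-plus-hyperbolic bookkeeping.
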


In order to estimate the distance between $\gamma_0^i(t)$ and $\gamma_0^{i+1}(t)$ with $t\in[0,1]$, we will combine the uniform contraction with respect to the hyperbolic metrics and some quantitative estimates (with respect to the Euclidean metric) obtained in \S\ref{subsec-esti-2}. For any hyperbolic domain $X\subset\C$, we use $\rho_X(z)|\dd z|$ to denote the hyperbolic metric of $X$. The following lemma appears in \cite[Lemma 5.5]{Che19} in another form. For completeness we include a proof here.

\begin{lem}\label{lema-uni-con-prep}
Let $X$, $Y$ be two hyperbolic domains in $\C$ satisfying $\diam\,(\re(X))\leqslant A'$ and $B_\delta(X)\subset Y$, where $A'$ and $\delta$ are positive constants. Then there exists a number $0<\lambda<1$ depending only on $A'$ and $\delta$ such that for any $z\in X$,
\begin{equation}
\rho_Y(z)\leqslant \lambda\,\rho_X(z).
\end{equation}
\end{lem}

\begin{proof}
For any fixed $z_0\in X$, we consider the holomorphic function
\begin{equation}
F(z):=z+\frac{\delta\,(z-z_0)}{z-z_0+2A'+\delta}:X\to\C.
\end{equation}
Since $\diam\,(\re(X))\leqslant A'$, it follows that $|z-z_0|<|z-z_0+2A'+\delta|$ if $z\in X$. Thus we have $|F(z)-z|<\delta$ and $F(X)\subset Y$ by the assumption. Applying Schwarz-Pick's lemma to $F:X\to Y$ at $F(z_0)=z_0$, we have
\begin{equation}
\rho_Y(F(z_0))|F'(z_0)|=\rho_Y(z_0)\left(1+\frac{\delta}{2A'+\delta}\right)\leqslant \rho_X(z_0).
\end{equation}
The proof is finished if we set $\lambda:=(2A'+\delta)/(2A'+2\delta)$.
\end{proof}

Let $X$ be a set in $\C$ and $z_0\in X$. We use $\Comp_{z_0}X$ to denote the connected component of $X$ containing $z_0$. Let $\MD_n$ be the set defined in \eqref{equ-MD-f}. For $n\in\N$, we define
\begin{equation}\label{equ-D-n-pri}
\MD_n':=\Comp_1(\MD_n\cap\{\zeta\in\C:-3<\im\zeta< h_n+2\}),
\end{equation}
where $ h_n$ is the height defined in \eqref{defi-eta}.
Note that each $\MD_n'$ is a hyperbolic domain. Let $\rho_n(z)|\dd z|$ be the hyperbolic metric of $\MD_n'$.
We use $\len(\cdot)$ and $\len_{\rho_n}(\cdot)$ to denote the length of curves with respect to the Euclidean and the hyperbolic metric  $\rho_n(z)|\dd z|$ respectively.

\begin{lem}\label{lema:exp-conv}
Let $A'>0$ and $\delta>0$ be two constants. Then there exist $A>0$ and $0<\nu<1$ depending only on $A'$ and $\delta$ such that for any piecewise continuous curve $\vartheta_n$ in $\MD_n'$ with $\len(\vartheta_n)\leqslant A'$ and $B_\delta(\vartheta_n)\subset\MD_n'$, we have
\begin{equation}
\len(\chi_{1,j_1}\circ\cdots\circ\chi_{n,j_n}(\vartheta_n))\leqslant A\cdot\nu^n,
\end{equation}
where $0\leqslant j_i\leqslant a_i$ and $1\leqslant i\leqslant n$.
\end{lem}

\begin{proof}
Let $1\leqslant i\leqslant n$ and $0\leqslant j_i\leqslant a_i$. Note that we have assumed that $M>D_3$ in \eqref{equ-y-n-alpha}. By Lemma \ref{lema-key-estimate-lp}, for $\zeta\in\MD_i'$, we have
\begin{equation}\label{equ-im-chi-1}
\im\chi_{i,j_i}(\zeta)\leqslant\tfrac{\MB(\alpha_i)}{2\pi}+M+D_3+1<\tfrac{\MB(\alpha_i)}{2\pi}+\tfrac{M}{\alpha_{i-1}}+1= h_{i-1}+1.
\end{equation}
Since $\Phi_i^{-1}(\MD_i)$ is contained in the image of $f_i$, by the definition of near-parabolic renormalization (see also \eqref{equ-chi-choice}), we have
\begin{equation}\label{equ-im-chi-2}
\im\chi_{i,j_i}(\zeta)>-2, \text{ for all }\zeta\in\MD_i.
\end{equation}
By Lemma \ref{lema:comp-inclu}, we have $B_{\delta_0}(\chi_{i,j_i}(\MD_i))\subset\MD_{i-1}$ for a constant $\delta_0$ depending only on the class $\IS_0$. Without loss of generality, we assume that $\delta_0<1$. Combining \eqref{equ-im-chi-1} and \eqref{equ-im-chi-2}, we have
\begin{equation}\label{equ-B-delta-D}
B_{\delta_0}(\chi_{i,j_i}(\MD_i'))\subset\MD_{i-1}'.
\end{equation}

Note that $\chi_{i,j_i}:(\MD_i',\rho_i)\to (\MD_{i-1}',\rho_{i-1})$ can be decomposed as:
\begin{equation}
(\MD_i',\rho_i)  \xlongrightarrow{\chi_{i,j_i}} (\chi_{i,j_i}(\MD_i'), \tilde{\rho}_i) \overset{inc.}{\hookrightarrow} (\MD_{i-1}',\rho_{i-1}),
\end{equation}
where $\tilde{\rho}_i(z)|dz|$ is the hyperbolic metric of $\chi_{i,j_i}(\MD_i')$. According to Proposition \ref{prop-unif-inverse}, we have $\diam\,(\re\chi_{i,j_i}(\MD_i'))\leqslant \kc_1$. By Lemma \ref{lema-uni-con-prep}, the inclusion map
\begin{equation}
(\chi_{i,j_i}(\MD_i'), \tilde{\rho}_i) \overset{inc.}{\hookrightarrow} (\MD_{i-1}',\rho_{i-1})
\end{equation}
is uniformly contracting with respect to the hyperbolic metrics (and the contracting factor depends only on $\kc_1$ and $\delta_0$). Since $\chi_{i,j_i}:\MD_i'\to \chi_{i,j_i}(\MD_i')$ do not expand the hyperbolic metric, it follows that $\chi_{i,j_i}:(\MD_i',\rho_i)\to (\MD_{i-1}',\rho_{i-1})$ is also uniformly contracting.

Since $\vartheta_n$ is a piecewise continuous curve satisfying $\len(\vartheta_n)\leqslant A'$ and $B_\delta(\vartheta_n)\subset\MD_n'$, it follows that there exists a constant $A''>0$ depending only on $A'$ and $\delta$ (not on $n$) such that $\len_{\rho_n}(\vartheta_n)\leqslant A''$. Define
\begin{equation}
G_n:=\chi_{1,j_1}\circ\cdots\circ\chi_{n,j_n}: \MD_n'\to \MD_0'.
\end{equation}
By the uniform contraction of $\chi_{i,j_i}$ for $1\leqslant i\leqslant n$ with respect to the hyperbolic metrics, there exists a constant $0<\nu<1$ depending only on $\kc_1$ and $\delta_0$ such that
\begin{equation}
\len_{\rho_0} (G_n(\vartheta_n))\leqslant A''\cdot\nu^n.
\end{equation}
Since $B_{\delta_0}\big(G_n(\MD_n')\big)\subset \MD_0'$, the Euclidean metric and the hyperbolic metric $\rho_0$ of $\MD_0'$ are comparable in $G_n(\MD_n')$. Since $G_n(\vartheta_n)\subset G_n(\MD_n')\subset \MD_0'$, there exists a constant $A>0$ depending only on $A'$ and $\delta$ such that $\len (G_n(\vartheta_n))\leqslant A\cdot\nu^n$.
\end{proof}

Let $D_6'>1$ be the constant introduced in Proposition \ref{prop-key-estimate-yyy}.

\begin{lem}\label{lema-go-up}
There exists $K_1>0$ such that for any $n\geqslant 1$ and any continuous curve $\eta_n:[0,1]\to \MD_n$ with $\eta_n(0)\in\widetilde{\gamma}_n^0$ and $\len \big(\eta_n\big)\leqslant h_n-D_6'-1$, then
\begin{equation}
\len \big(\chi_{n,0}(\eta_n)\big)\leqslant \tfrac{1}{2\pi}\MB(\alpha_n)+K_1.
\end{equation}
\end{lem}

\begin{proof}
By Proposition \ref{prop-key-estimate-yyy}, we define
\begin{equation}
\begin{aligned}
& \phi_1(r):=(1+D_6 e^{-2\pi\alpha_nr})\alpha_n ~~~&\text{if }& r\in[\tfrac{1}{4\alpha_n},+\infty),\\
& \phi_2(r):=\frac{\alpha_n}{1-e^{-2\pi\alpha_n(r-D_2'\log(2+r))}}\left(1+\frac{D_6}{r}\right) ~~~&\text{if }& r\in[D_6',\tfrac{1}{4\alpha_n}].
\end{aligned}
\end{equation}
A direct calculation shows that
\begin{equation}\label{equ-integ-1}
J':=\int_{1/(4\alpha_n)}^{ h_n-1}\phi_1(r)\,\textup{d}r< \frac{1}{2\pi}\alpha_n\MB(\alpha_{n+1})+M+D_6.
\end{equation}
We claim that there exists $K_1'>0$ which is independent of $\alpha_n$ such that
\begin{equation}\label{equ-J}
J'':=\int_{D_6'}^{1/(4\alpha_n)}\phi_2(r)\,\textup{d}r< \frac{1}{2\pi}\log\frac{1}{\alpha_n}+K_1'.
\end{equation}
In fact, a direct calculation shows that $J''=J_1+D_2' J_2+D_6 J_3$, where
\begin{equation}
\begin{split}
J_1=&~ \frac{1}{2\pi}\int_{D_6'}^{\frac{1}{4\alpha_n}}\frac{2\pi\alpha_n e^{2\pi\alpha_n r}-2\pi\alpha_n D_2'(r+2)^{2\pi\alpha_n D_2'-1}}{e^{2\pi\alpha_n r}-(r+2)^{2\pi\alpha_n D_2'}}\,\textup{d}r,\\
J_2=&~ \int_{D_6'}^{\frac{1}{4\alpha_n}}\frac{\alpha_n (r+2)^{2\pi\alpha_n D_2'-1}}{e^{2\pi\alpha_n r}-(r+2)^{2\pi\alpha_n D_2'}}\,\textup{d}r,
\text{\quad and} \\
J_3=&~ \int_{D_6'}^{\frac{1}{4\alpha_n}}\frac{\alpha_n e^{2\pi\alpha_n r}}{e^{2\pi\alpha_n r}-(r+2)^{2\pi\alpha_n D_2'}}\cdot\frac{1}{r}\,\textup{d}r.
\end{split}
\end{equation}
We assume that $\alpha_n$ is small such that $2\pi\alpha_n D_2'\leqslant 1/2$ and $2\pi\alpha_n D_2'\log(2+\tfrac{1}{4\alpha_n})\leqslant 1/2$. Since $1+t\leqslant e^t\leqslant 1+2t$ for $t\in[0,1]$, if $D_6'\leqslant r\leqslant \frac{1}{4\alpha_n}$, we have
\begin{equation}\label{equ-J-1}
\begin{split}
e^{2\pi\alpha_n r}-(r+2)^{2\pi\alpha_n D_2'}
\geqslant &~ 1+2\pi\alpha_n r-(1+4\pi\alpha_n D_2'\log(r+2)) \\
= &~ 2\pi\alpha_n (r-2 D_2'\log(r+2)),
\end{split}
\end{equation}
where $r-2D_2'\log(2+r)\geqslant 4$ if $r\geqslant D_6'$ (see Proposition \ref{prop-key-estimate-yyy}(b)).

By \eqref{equ-J-1}, there exist $C_1$, $C_1'>0$ which are independent of $\alpha_n$ such that
\begin{equation}
J_1\leqslant C_1-\frac{1}{2\pi}\log(e^{2\pi\alpha_n D_6'}-(D_6'+2)^{2\pi\alpha_n D_2'})\leqslant \frac{1}{2\pi}\log\frac{1}{\alpha_n}+C_1'.
\end{equation}
For $J_2$, since the integral
\begin{equation}
\int_{D_6'}^{+\infty}\frac{1}{r-2 D_2'\log(2+r)}\cdot \frac{1}{(r+2)^{1/2}}\,\textup{d}r
\end{equation}
is convergent, it follows that there exists a constant $C_2>0$ which is independent of $\alpha_n$ so that $J_2\leqslant C_2$.
Similarly, there exists a constant $C_3>1$ which is independent of $\alpha_n$ so that $J_3\leqslant C_3$. Hence \eqref{equ-J} follows if we set $K_1':=C_1'+C_2 D_2'+C_3D_6$.

\medskip
Without loss of generality we assume that $r\mapsto r-D_2'\log(2+r)$ is monotonously increasing on $[D_6',+\infty)$. Therefore, $\phi_1(r)$ and $\phi_2(r)$ are monotonously decreasing on $[\tfrac{1}{4\alpha_n},+\infty)$ and $[D_6',\tfrac{1}{4\alpha_n}]$ respectively. Denote
\begin{equation}\label{equ:phi-r}
\phi(r):=
\left\{
\begin{aligned}
& \phi_1(r)   & ~~~\text{if } &  r\in[\tfrac{1}{4\alpha_n},+\infty),\\
& \max\big\{\phi_2(r),\phi_1(\tfrac{1}{4\alpha_n})\big\} & ~~~\text{if } &  r\in[D_6',\tfrac{1}{4\alpha_n}).
\end{aligned}
\right.
\end{equation}
Then $\phi(r)$ is monotonously (may not strictly) decreasing on $[D_6',+\infty)$.
By Lemma \ref{lema-gamma-n-0}(d), we have $|\im\eta_n(0)- h_n|\leqslant 1$. Since $\len \big(\eta_n\big)\leqslant h_n-D_6'-1$, we have $\eta_n\cap \big(\D(0,D_6')\cup\D(1/\alpha_n,D_6')\big)=\emptyset$.
By \eqref{equ-integ-1} and \eqref{equ-J} we have
\begin{equation}\label{equ-integral}
\begin{split}
\len \big(\chi_{n,0}(\eta_n)\big)
\le &~ \int_{D_6'}^{ h_n-1}\phi(r)\,\textup{d}r \leqslant J' +\Big(J''+\big(\tfrac{1}{4\alpha_n}-D_6'\big)\phi_1\big(\tfrac{1}{4\alpha_n}\big)\Big) \\
< &~ J'+J''+ \frac{1}{4}(D_6+1)<\frac{1}{2\pi}\MB(\alpha_n)+K_1,
\end{split}
\end{equation}
where $K_1:=M+\frac{3}{2}D_6+K_1'$. The proof is complete.
\end{proof}

\begin{proof}[{Proof of Proposition \ref{prop-Cauchy-sequence}}]
Note that $\gamma_0^n(t)=\widetilde{\gamma}_0^n(\tfrac{a_1-1}{a_1}t)$ for all $t\in[0,1]$ and all $n\in\N$. In order to prove \eqref{equ-Cauchy}, it suffices to prove that there exist $K>0$ and a sequence of non-negative numbers $(y_i)_{i\geqslant 0}$ such that for any $n\in\N$, any $0\leqslant i\leqslant n$ and any $t_0\in[0,1]$, we have
\begin{equation}\label{equ-convergent-1}
|\widetilde{\gamma}_0^i(t_0)-\widetilde{\gamma}_0^{i+1}(t_0)|\leqslant y_i \text{\quad and\quad} \sum_{i=0}^n y_i\leqslant K.
\end{equation}
We divide the argument into several steps.

\medskip
\textbf{Step 1. Basic settings.}
For any $t_0\in[0,1]$, by Lemma \ref{lema:gam-inverse}, there exist two sequences $(t_n)_{n\in\N}$ with $t_n\in[0,1]$ and $(j_n)_{n\geqslant 1}$ with $0\leqslant j_n\leqslant a_n$ such that for all $n\geqslant 1$ and all $i\in\N$,
\begin{equation}\label{equ-seg-t-n}
\widetilde{\gamma}_{n-1}^{i+1}(t_{n-1})=\chi_{n,j_n}\big(\widetilde{\gamma}_n^{i}(t_n)\big).
\end{equation}
For $n\in\N$, let
\begin{equation}
\xi_n^0:[0,1]\to [\widetilde{\gamma}_n^0(t_n),\widetilde{\gamma}_n^1(t_n)]
\end{equation}
be the segment with $\xi_n^0(0)=\widetilde{\gamma}_n^0(t_n)$ and $\xi_n^0(1)=\widetilde{\gamma}_n^1(t_n)$ (we assume that the parametrization of $\xi_n^0$ on $[0,1]$ is linear).

By the definition of $\MD_n'$ and Lemma \ref{lema:D-n}, the set $\MD_n'$ contains
\begin{equation}
\{\zeta\in\C:0<\re\zeta\leqslant\lfloor\tfrac{1}{\alpha_n}\rfloor-\kc+\kc_0+\kc_1+3 \text{ and } 0\leqslant\im\zeta<h_n+2\}.
\end{equation}
By Lemma \ref{lema-stru-R-n}(a)(c), \eqref{equ-u-n-domain} and Lemma \ref{lema-gamma-n-0}(d), we have
\begin{equation}
\widetilde{\gamma}_n^0\subset\{\zeta\in\C:\tfrac{1}{2}\leqslant\re\zeta\leqslant\lfloor\tfrac{1}{\alpha_n}\rfloor-\kc+k_n+\tfrac{3}{2} \text{ and }-1\leqslant\im\zeta-h_n\leqslant 1\}.
\end{equation}
In \eqref{equ-y-n-alpha} we assume that $M>D_3+\tfrac{1}{2\pi}\log\tfrac{4D_7}{27}+2> D_3+\frac{3}{2}$ (since $D_7> 1$).
Hence by \eqref{equ-chi-n-1} and Lemma \ref{lema-gamma-n-1}(d), we have
\begin{equation}\label{equ:gamma-ti-n-1}
\widetilde{\gamma}_n^1\subset\{\zeta\in\C:1\leqslant\re\zeta\leqslant\lfloor\tfrac{1}{\alpha_n}\rfloor+\kc_1+3 \text{ and }1\leqslant\im\zeta\leqslant h_n+1\}.
\end{equation}
Note that $k_n\leqslant \kc_0$ (see Proposition \ref{prop-CC-2}).
Hence we have $B_{1/2}(\xi_n^0)\subset\MD_n'$ for all $n\in\N$.
For $\ell\geqslant 1$, we define the Jordan arc $\xi_n^\ell:[0,1]\to\C$ as
\begin{equation}\label{equ-xi-i-ell}
\xi_n^\ell(s):=\chi_{n+1,j_{n+1}}\circ\cdots\circ \chi_{n+\ell,j_{n+\ell}}(\xi_{n+\ell}^0(s)), \text{ where } s\in[0,1].
\end{equation}

By \eqref{equ-seg-t-n} and \eqref{equ-xi-i-ell}, the following curve is continuous:
\begin{equation}\label{equ-Xi-i-1}
\begin{split}
\eta_n^\ell:=\xi_n^0\cup\xi_n^1\cup\cdots\cup\xi_n^\ell=&~\xi_n^0\cup\chi_{n+1,j_{n+1}}(\eta_{n+1}^{\ell-1}) \\
=&~\xi_n^0\cup\chi_{n+1,j_{n+1}}\big(\xi_{n+1}^0\cup\cdots\cup\xi_{n+1}^{\ell-1}\big).
\end{split}
\end{equation}
Denote $\eta_n^0:=\xi_n^0$.
According to \eqref{equ-B-delta-D}, for any $n\geqslant 0$ and $\ell\ge 0$ we have
\begin{equation}
B_{\delta}(\eta_n^\ell)\subset\MD_n', \text{\quad where\quad} \delta:=\min\{\delta_0,1/4\}.
\end{equation}
We give a parametrization of the continuous curve $\eta_n^\ell:[0,1]\to\C$ by
\begin{equation}\label{equ-Xi-i-para}
\eta_n^\ell(s):=\xi_n^j\big((\ell+1)s-j\big),
\end{equation}
where $s\in[\tfrac{j}{\ell+1},\tfrac{j+1}{\ell+1}]$ and $0\leqslant j\leqslant \ell$ (note that $\xi_n^j(1)=\xi_n^{j+1}(0)$ for every $0\leqslant j\leqslant \ell-1$).
By definition, we have $|\widetilde{\gamma}_0^i(t_0)-\widetilde{\gamma}_0^{i+1}(t_0)|\leqslant\len(\xi_0^i)$ for all $i\in\N$.
Therefore, in order to obtain \eqref{equ-convergent-1}, it suffices to prove that there exist $K>0$ and non-negative numbers $(y_i)_{i\geqslant 0}$ such that for any $n\in\N$ and any $0\leqslant i\leqslant n$, we have
\begin{equation}\label{equ-seq-seg}
\len (\xi_0^i)\leqslant y_i \text{\quad and\quad} \sum_{i=0}^n y_i\leqslant K.
\end{equation}

\textbf{Step 2. Decompositions of the curves.}
Note that we have assumed that $M> D_3+\frac{3}{2}$ (see \eqref{equ-y-n-alpha}).
By \eqref{equ-gam-tilde-n}, it follows that Lemma \ref{lema-gamma-n-1}(d) holds also for $\widetilde{\gamma}_n^0$ and $\widetilde{\gamma}_n^1$. By Lemma \ref{lema-gamma-n-0}(d) and a direct calculation, we have
\begin{equation}\label{equ-Xi-est}
\begin{split}
 &~\len (\eta_n^0)=\len (\xi_n^0)=|\widetilde{\gamma}_n^0(t_n)-\widetilde{\gamma}_n^1(t_n)|\\
\leqslant &~ h_n+1-\tfrac{\MB(\alpha_{n+1})}{2\pi}-M+D_3+\tfrac{1}{2}+D_8<  h_n-\tfrac{\MB(\alpha_{n+1})}{2\pi}+D_8.
\end{split}
\end{equation}
Hence $\eta_n^0=\xi_n^0:[0,1]\to\C$ can be written as the union of two continuous curves $\eta_{n,(0)}^0:=\eta_n^0([0,s_n])$ and $\eta_{n,(1)}^0:=\eta_n^0([s_n,1])$ for some $s_n\in (0,1)$ (the choice of $s_n$ is not unique), such that
\begin{equation}\label{equ:decomp-1}
\len(\eta_{n,(0)}^0)\leqslant h_n-D_6'-1 \text{\quad and\quad}\len(\eta_{n,(1)}^0)\leqslant D_6'+D_8+1.
\end{equation}
Since $B_{\delta}(\eta_n^0)\subset\MD_n'$, there exists a constant $K_2'>0$ depending only on $\delta$ and $D_6'+D_8+1$ such that
\begin{equation}\label{equ:decomp-2}
\len_{\rho_n}(\eta_{n,(1)}^0)\leqslant K_2',
\end{equation}
where $\rho_n(z)|dz|$ is the hyperbolic metric of $\MD_n'$.

\medskip
Let $K_1>0$ be the constant introduced in Lemma \ref{lema-go-up}. There exists a constant $K_2> K_2'$ depending only on $A':=K_1+D_6'+D_8+1$ and $\delta$, such that for any $n\in\N$ and any piecewise continuous curve $\xi'$ in $\MD_n'$ with $B_{\delta}(\xi')\subset\MD_n'$ and $\len(\xi')\leqslant K_1+D_6'+D_8+1$, one has
\begin{equation}\label{equ:K-2}
\len_{\rho_n}(\xi')\leqslant K_2.
\end{equation}
Let $\nu\in (0,1)$ be the number in Lemma \ref{lema:exp-conv} depending only on $A'$ and $\delta$.

Suppose $n\geqslant 1$. By Lemma \ref{lema-go-up} and \eqref{equ:decomp-1}, Lemma \ref{lema:exp-conv} and \eqref{equ:decomp-2}, $\xi_{n-1}^1$ is the union of two continuous curves $\chi_{n,j_n}(\eta_{n,(0)}^0)$ and $\chi_{n,j_n}(\eta_{n,(1)}^0)$, where
\begin{equation}\label{equ:len-1}
\begin{split}
\len\big(\chi_{n,j_n}(\eta_{n,(0)}^0)\big)\leqslant &~\tfrac{1}{2\pi}\MB(\alpha_n)+K_1 \text{\quad and }  \\
\len_{\rho_{n-1}}\big(\chi_{n,j_n}(\eta_{n,(1)}^0)\big)\leqslant &~ K_2'\nu < K_2\nu.
\end{split}
\end{equation}
Therefore, by \eqref{equ-Xi-est} we have
\begin{equation}\label{equ:len-2}
\begin{split}
&~ \len\big(\xi_{n-1}^0\cup \chi_{n,j_n}(\eta_{n,(0)}^0)\big)\\
\leqslant &~\big( h_{n-1}-\tfrac{\MB(\alpha_n)}{2\pi}+D_8\big) + \big(\tfrac{\MB(\alpha_n)}{2\pi}+K_1\big)=h_{n-1}+D_8+K_1.
\end{split}
\end{equation}
This implies that $\xi_{n-1}^0\cup \chi_{n,j_n}(\eta_{n,(0)}^0)=\xi_{n-1}^0\cup \xi_{n-1}^1([0,s_n])=\eta_{n-1}^1([0,\tfrac{1+s_n}{2}])$ can be written as the union of two continuous curves $\eta_{n-1,(0)}^1:=\eta_{n-1}^1([0,s_{n-1}])$ and $\eta_{n-1,(1)}^1:=\eta_{n-1}^1([s_{n-1},\tfrac{1+s_n}{2}])$ for some $s_{n-1}\in (0,\tfrac{1+s_n}{2})$, where
\begin{equation}\label{equ:len-3}
\begin{split}
\len(\eta_{n-1,(0)}^1)\leqslant &~ h_{n-1}-D_6'-1 \text{\quad and } \\
\len(\eta_{n-1,(1)}^1)\leqslant &~ A'=K_1+D_6'+D_8+1.
\end{split}
\end{equation}
Since $B_{\delta}(\eta_{n-1}^1)\subset\MD_{n-1}'$, by \eqref{equ:K-2} we have $\len_{\rho_{n-1}}(\eta_{n-1,(1)}^1)\leqslant K_2$.

Denote $\eta_{n-1,(2)}^1:=\eta_{n-1}^1([\frac{1+s_n}{2},1])=\chi_{n,j_n}(\eta_{n,(1)}^0)$, $s_{n-1}^{(1)}:=s_{n-1}$ and $s_{n-1}^{(2)}:=\tfrac{1+s_n}{2}$. Then the continuous curve
\begin{equation}
\eta_{n-1}^1=\xi_{n-1}^0\cup\xi_{n-1}^1=\eta_{n-1,(0)}^1\cup\eta_{n-1,(1)}^1\cup \eta_{n-1,(2)}^1
\end{equation}
satisfies:
\begin{itemize}
\item $\eta_{n-1,(0)}^1=\eta_{n-1}^1([0,s_{n-1}^{(1)}])$, $\eta_{n-1,(1)}^1=\eta_{n-1}^1([s_{n-1}^{(1)},s_{n-1}^{(2)}])$ and $\eta_{n-1,(2)}^1=\eta_{n-1}^1([s_{n-1}^{(2)},1])$; and
\item $\len(\eta_{n-1,(0)}^1)\leqslant h_{n-1}-D_6'-1$, $\len_{\rho_{n-1}}(\eta_{n-1,(1)}^1)\leqslant K_2$ and $\len_{\rho_{n-1}}(\eta_{n-1,(2)}^1)\leqslant K_2\nu$.
\end{itemize}

\textbf{Step 3. Inductive procedure.}
Suppose there exists $1\leqslant i\leqslant n-1$ such that $\eta_{n-i}^i=\bigcup_{\ell=0}^i \xi_{n-i}^\ell=\bigcup_{k=0}^{i+1}\eta_{n-i,(k)}^i$ with $B_{\delta}(\eta_{n-i}^i)\subset\MD_{n-i}'$ has the following properties:
\begin{itemize}
\item $\eta_{n-i,(k)}^i=\eta_{n-i}^i([s_{n-i}^{(k)},s_{n-i}^{(k+1)}])$ for some $0=s_{n-i}^{(0)}<s_{n-i}^{(1)}<\cdots<s_{n-i}^{(i+1)}<s_{n-i}^{(i+2)}=1$, where $0\leqslant k\leqslant i+1$; and

\item $\len(\eta_{n-i,(0)}^i)\leqslant h_{n-i}-D_6'-1$ and $\len_{\rho_{n-i}}(\eta_{n-i,(k)}^i)\leqslant  K_2\nu^{k-1}$ for every $1\leqslant k\leqslant i+1$.
\end{itemize}

By a similar argument to \eqref{equ:len-1}, \eqref{equ:len-2} and \eqref{equ:len-3}, there exist $0=s_{n-i-1}^{(0)}<s_{n-i-1}^{(1)}<\cdots<s_{n-i-1}^{(i+2)}<s_{n-i-1}^{(i+3)}=1$ such that the continuous curve $\eta_{n-i-1}^{i+1}=\bigcup_{\ell=0}^{i+1} \xi_{n-i-1}^\ell=\bigcup_{k=0}^{i+2}\eta_{n-i-1,(k)}^{i+1}$  with $B_{\delta}(\eta_{n-i-1}^{i+1})\subset\MD_{n-i-1}'$ has the following properties:
\begin{itemize}
\item $\eta_{n-i-1,(k)}^{i+1}=\eta_{n-i-1}^{i+1}([s_{n-i-1}^{(k)},s_{n-i-1}^{(k+1)}])$, where $0\leqslant k\leqslant i+2$; and

\item $\len(\eta_{n-i-1,(0)}^{i+1})\leqslant h_{n-i-1}-D_6'-1$ and $\len_{\rho_{n-i-1}}(\eta_{n-i-1,(k)}^{i+1})\leqslant  K_2\nu^{k-1}$ for every $1\leqslant k\leqslant i+2$.
\end{itemize}

Inductively (as $i$ increases), there exist $0=s_0^{(0)}<s_0^{(1)}<\cdots<s_0^{(n+1)}<s_0^{(n+2)}=1$ such that the continuous curve
$\eta_0^n=\bigcup_{\ell=0}^n \xi_0^\ell=\bigcup_{k=0}^{n+1}\eta_{0,(k)}^n$  with $B_{\delta}(\eta_0^n)\subset\MD_0'$ has the following properties:
\begin{itemize}
\item $\eta_{0,(k)}^n=\eta_0^n([s_0^{(k)},s_0^{(k+1)}])$, where $0\leqslant k\leqslant n+1$; and

\item $\len(\eta_{0,(0)}^n)\leqslant h_0-D_6'-1$ and $\len_{\rho_0}(\eta_{0,(k)}^n)\leqslant  K_2\nu^{k-1}$ for every $1\leqslant k\leqslant n+1$.
\end{itemize}

\medskip
\textbf{Step 4. The conclusion.}
Since $B_{\delta}(\eta_0^n)\subset\MD_0'$, the Euclidean metric and the hyperbolic metric $\rho_0$ of $\MD_0'$ are comparable in a small neighborhood of $\eta_0^n$.
Hence there exists a constant $C>0$ depending only on $\delta$ such that
\begin{equation}
\sum_{k=1}^{n+1}\len(\eta_{0,(k)}^n)\leqslant  C\sum_{k=1}^{n+1}\len_{\rho_0}(\eta_{0,(k)}^n) \leqslant \frac{C K_2}{1-\nu}.
\end{equation}
Therefore, for all $n\geqslant 0$ we have
\begin{equation}
\len(\eta_0^n)=\sum_{i=0}^n\len(\xi_0^i)=\sum_{k=0}^{n+1}\len(\eta_{0,(k)}^n)\leqslant K:=h_0-D_6'-1+\frac{C K_2}{1-\nu}.
\end{equation}

By \eqref{equ:phi-r}, \eqref{equ-integral} and the similar estimates to \eqref{equ:len-1} and \eqref{equ:len-2} in the above inductive procedure, it follows that for any $n\geqslant 0$, there exists a sequence of non-negative numbers $\{y_i^{(n)}:0\leqslant i\leqslant n\}$ which is independent of the sequence $(t_n)_{n\in\N}$ such that
for any $0\leqslant i\leqslant n$, we have
\begin{equation}
\len (\xi_0^i)\leqslant y_i^{(n)} \text{\quad and\quad} \sum_{i=0}^n y_i^{(n)}\leqslant K.
\end{equation}
Then \eqref{equ-seq-seg} holds if we set $y_i:=\inf_{n\in\N}\big\{y_i^{(n)}\big\}$.

\medskip
The estimate \eqref{equ-Cauchy} implies that the sequence of continuous curves $(\widetilde{\gamma}_n(t))_{n\in\N}$ converges uniformly on $[0,1]$. Since $\gamma_0^n(t)=\widetilde{\gamma}_0^n(\tfrac{a_1-1}{a_1}t)$ for all $t\in[0,1]$ and $n\in\N$, it implies that $(\gamma_0^n(t))_{n\in\N}$ converges uniformly on $[0,1]$.
\end{proof}

\begin{rmk}
 If $\alpha$ is of bounded type, or if there exists a universal constant $C>0$ such that $\MB(\alpha_{n+1})\geqslant C/\alpha_n$ for all $n\in\N$, then the sequence $(\gamma_0^n(t))_{n\in\N}$ converges exponentially fast as $n\to\infty$.
\end{rmk}

\subsection{The Siegel disks are Jordan domains}\label{subsec-conv-to-bdy}

By Proposition \ref{prop-Cauchy-sequence}, the sequence of the continuous curves $(\gamma_0^n(t))_{n\geqslant 0}$ has a limit:
\begin{equation}
\gamma_0^\infty(t):=\lim_{n\to\infty}\gamma_0^n(t), \text{\quad where } t\in[0,1].
\end{equation}

\begin{prop}\label{prop-tend-to-bdy}
The limit $\Phi_0^{-1}(\gamma_0^\infty)$ is the boundary of the Siegel disk of $f_0$.
\end{prop}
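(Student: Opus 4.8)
The proof falls into three parts: that $\Phi_0^{-1}(\gamma^\infty)$ is a well-defined continuous closed curve lying in $\overline{\Delta_0}$; the inclusion $\Phi_0^{-1}(\gamma^\infty)\subset\partial\Delta_0$; and the reverse inclusion $\partial\Delta_0\subset\Phi_0^{-1}(\gamma^\infty)$. For the first, Proposition~\ref{prop-Cauchy-sequence} gives uniform convergence $\gamma_0^n\to\gamma^\infty$, while Lemma~\ref{lema-gamma-n-1}(d) together with \eqref{equ-Siegel-below} and Lemma~\ref{lema:D-n} confine all the $\gamma_0^n$, hence $\gamma^\infty$, to a fixed compact subset of the open set on which $\Phi_0^{-1}$ is holomorphic; thus $\Phi_0^{-1}(\gamma_0^n)\to\Phi_0^{-1}(\gamma^\infty)$ uniformly, and since each $\Phi_0^{-1}(\gamma_0^n)$ is a closed curve in $\Delta_0$ (Lemmas~\ref{lema-gamma-n-1}(c) and \ref{lema-gamma-n-i}(c)), the limit is a continuous closed curve in $\overline{\Delta_0}$. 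Reading off real parts, each $\gamma_0^n$ runs through essentially one full period $1/\alpha_0$, so $\Phi_0^{-1}(\gamma_0^n)$ winds once around the origin; consequently its unbounded complementary component avoids $0$, and writing $V^n$ for the union of the bounded complementary components we have $0\in V^n\subset\Delta_0$ and $\partial V^n\subset\Phi_0^{-1}(\gamma_0^n)$.

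The crux is the inclusion $\Phi_0^{-1}(\gamma^\infty)\subset\partial\Delta_0$, for which it suffices to show $\sup_{t\in[0,1]}\dist\bigl(\Phi_0^{-1}(\gamma_0^n(t)),\partial\Delta_0\bigr)\to 0$ as $n\to\infty$. The key point is that $\Phi_m^{-1}(\gamma_m^1)$ lies, at the natural scale, right next to $\partial\Delta_m$: since $\gamma_m^1$ is assembled from $\gamma_{m+1}^0$ through the branches $\chi_{m+1,j}$ of $\Expo^{-1}\circ\Phi_{m+1}^{-1}$, one has $\Expo\circ\gamma_m^1=\Phi_{m+1}^{-1}(\gamma_{m+1}^0)$, a curve encircling $0$ inside $\Delta_{m+1}$; by the choice of the height $\eta_{m+1}$ in \eqref{defi-eta}, Lemma~\ref{lema-fixed-disk}, and $M$ being large, this curve sits at a distance comparable to the inner radius $c_{m+1}e^{-\MB(\alpha_{m+1})}$ of $\Delta_{m+1}$, so (using the size estimates of Lemmas~\ref{lema-key-estimate-lp} and \ref{lema-key-esti-inverse}, together with Cheraghi's upper bound on the conformal radius of $\Delta_{m+1}$ recorded in Lemma~\ref{lema-fixed-disk}) $\Phi_m^{-1}(\gamma_m^1(t))$ stays within a fixed multiple of $e^{-\MB(\alpha_m)}$ of $\partial\Delta_m$ for all $t$. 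Now write $\gamma_0^n$ through the renormalization tower as pieces of $\chi_{1,j_1}\circ\cdots\circ\chi_{n-1,j_{n-1}}(\gamma_{n-1}^1(\cdot))$ and transport the corresponding pieces of a lift of $\partial\Delta_{n-1}$ down by the same composition: by Lemma~\ref{lema-Cheraghi-2} this composition, followed by $\Phi_0^{-1}$, equals an iterate of $f_0$ applied after $\Psi_{n-1}$, under which $\Delta_{n-1}$ maps into $\Delta_0$ and the petal part of $\partial\Delta_{n-1}$ maps into $\partial\Delta_0$, while Lemma~\ref{lema:exp-conv} makes the composition contract lengths by $O(\nu^{\,n-1})$. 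Hence the bounded level-$(n-1)$ separation becomes a level-$0$ separation of size $O(\nu^{\,n-1})$, and the required uniform convergence follows; with the first part this gives $\Phi_0^{-1}(\gamma^\infty)\subset\partial\Delta_0$.

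For the reverse inclusion I show the $V^n$ increase and exhaust $\Delta_0$. They increase because at the innermost level $\gamma_n^0$ lies at height $\eta_n=\tfrac{\MB(\alpha_{n+1})}{2\pi}+\tfrac{M}{\alpha_n}$, strictly above the height $\tfrac{\MB(\alpha_{n+1})}{2\pi}+M+O(1)$ of $\gamma_n^1$ (Lemma~\ref{lema-gamma-n-1}(d)), so $\Phi_n^{-1}(\gamma_n^0)$ encircles $0$ more tightly than $\Phi_n^{-1}(\gamma_n^1)$, and this inclusion is preserved upon pushing both curves down the tower; and they exhaust $\Delta_0$ because if some $q\in\Delta_0$ belonged to $\partial(\bigcup_n V^n)$ then $\dist(q,\Phi_0^{-1}(\gamma_0^n))\le\dist(q,\partial V^n)\to 0$, forcing $q\in\Phi_0^{-1}(\gamma^\infty)\subset\partial\Delta_0$ by the previous paragraph, which is absurd, so the open set $\bigcup_n V^n$ is also closed in the connected set $\Delta_0$ and therefore equals $\Delta_0$. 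For an increasing exhaustion of a domain by open sets $V^n$ with $\partial V^n\to\Phi_0^{-1}(\gamma^\infty)$, every point of $\partial\Delta_0=\partial(\bigcup_n V^n)$ is a limit of points of $\partial V^n$, hence lies in $\Phi_0^{-1}(\gamma^\infty)$; combined with the previous paragraph, $\Phi_0^{-1}(\gamma^\infty)=\partial\Delta_0$.

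The main obstacle is the estimate in the second part: establishing rigorously that $\Phi_m^{-1}(\gamma_m^1)$ remains within $O(e^{-\MB(\alpha_m)})$ of $\partial\Delta_m$ — which is exactly where the two-sided control on the size of $\Delta_{m+1}$, the lower bound from Brjuno's estimate and the matching upper bound of Cheraghi \cite{Che16} on the conformal radius, is indispensable — and then checking that transporting this down $n-1$ levels of the renormalization tower loses nothing: the per-level hyperbolic contraction of Lemma~\ref{lema:exp-conv} must dominate any expansion of $\Phi_0^{-1}$ near $\partial\Delta_0$, and one must verify that the chosen lift of $\partial\Delta_{n-1}$ is genuinely carried into $\partial\Delta_0$ by $\Phi_0^{-1}\circ\chi_{1,j_1}\circ\cdots\circ\chi_{n-1,j_{n-1}}$, which rests on Lemma~\ref{lema-Cheraghi-2} and the fact that each $\psi_m$ maps $\Delta_m\cap\MP_m$ into $\Delta_{m-1}$, hence its boundary arcs into $\partial\Delta_{m-1}$.
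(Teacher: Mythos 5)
Your argument for the crucial inclusion $\Phi_0^{-1}(\gamma^\infty)\subset\partial\Delta_0$ is, at its core, the same as the paper's: express a point $\zeta_0\in\gamma_0^{n+1}$ as $\chi_{1,j_1}\circ\cdots\circ\chi_{n,j_n}(\zeta_n)$ with $\zeta_n\in\widetilde\gamma_n^1$, use the two-sided Siegel-disk size bounds of Lemma~\ref{lema-fixed-disk} to produce a nearby point of a lift of $\partial\Delta_{n+1}$, and push the short segment joining them down the tower with the uniform hyperbolic contraction of Lemma~\ref{lema:exp-conv}. The one substantive difference in phrasing is that the paper performs the comparison entirely in the Fatou coordinate plane: it defines $\Delta_n':=\{\zeta\in\MD_n:\Phi_n^{-1}(\zeta)\in\Delta_n\}$, finds a point $\zeta_n'\in\partial\Delta_n'\cap\Expo^{-1}(\partial\Delta_{n+1})$ with $|\zeta_n-\zeta_n'|\le A'$ uniformly, contracts $[\zeta_n,\zeta_n']$ to $O(\nu^n)$, and concludes $\dist(\zeta_0,\partial\Delta_0')\le A\nu^n$. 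Your dynamical-plane rendering, ``$\Phi_m^{-1}(\gamma_m^1(t))$ stays within a fixed multiple of $e^{-\MB(\alpha_m)}$ of $\partial\Delta_m$,'' is imprecise as written: the local scale of $\Phi_m^{-1}$ near $\gamma_m^1$ varies with $t$ (by Lemma~\ref{lema-key-estimate-lp} it depends on $\min\{|\zeta|,|\zeta-1/\alpha_m|\}$), so a single exponential scale does not capture the estimate; the uniform bound that actually matters is the $O(1)$ bound in the coordinate plane, after which the final $\Phi_0^{-1}$ is Lipschitz on the fixed compact set containing all the $\gamma_0^n$ (by \eqref{equ-Siegel-below}, Lemma~\ref{lema-gamma-n-1}(d) and Lemma~\ref{lema:D-n}).

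For the reverse inclusion you take a genuinely different route from the paper. The paper simply observes that the uniform limit of the closed curves $\Phi_0^{-1}(\gamma_0^n)$, each winding once around $0$ inside $\Delta_0$, is a locally connected closed curve in $\partial\Delta_0$ separating $\Delta_0$ from $U_0\setminus\overline\Delta_0$, and that this forces equality. You instead build an exhaustion of $\Delta_0$ by the filled-in regions $V^n$. Your exhaustion and the closing topological argument are fine, but the claim that the $V^n$ increase is asserted with a step (``this inclusion is preserved upon pushing both curves down the tower'') that is not justified: the pieces of $\gamma_0^n$ and $\gamma_0^{n+1}$ over a common $t$ are images of $\widetilde\gamma_n^0(t_n)$ and $\widetilde\gamma_n^1(t_n)$ under the \emph{same} anti-holomorphic composition $\chi_{1,j_1}\circ\cdots\circ\chi_{n,j_n}$, but pointwise order of moduli is not preserved by a general (anti-)conformal map, and Lemma~\ref{lema-key-estimate-lp} gives you modulus control only in terms of imaginary part, not along a tower of such compositions. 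Fortunately you do not need monotonicity of the $V^n$: the estimate from the forward-inclusion step already gives $\sup_t\dist(\Phi_0^{-1}(\gamma_0^n(t)),\partial\Delta_0)\to0$, so any compact $K\subset\Delta_0$ is eventually disjoint from $\Phi_0^{-1}(\gamma_0^n)$; since $0\in K$ lies in the bounded complementary set (by the winding-number-$1$ fact you already noted), $K\subset V^n$ for large $n$, and $\bigcup_n V^n=\Delta_0$ follows directly. With that repair, the rest of your closing argument, that each $q\in\partial\Delta_0$ is a limit of points of $\partial V^{n_k}\subset\Phi_0^{-1}(\gamma_0^{n_k})$ and hence lies on $\Phi_0^{-1}(\gamma^\infty)$, is correct and slightly more explicit than the paper's one-line separation claim.
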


\begin{proof}
For $\zeta_0\in\gamma_0^{n+1}$, there exists $\zeta_n\in\widetilde{\gamma}_n^1\subset\bigcup_{j_{n+1}=0}^{a_{n+1}}\chi_{n+1,j_{n+1}}(\widetilde{\gamma}_{n+1}^0)$ such that
\begin{equation}
\zeta_0=\chi_{1,j_1}\circ\cdots\circ\chi_{n,j_n}(\zeta_n)
\end{equation}
for some sequence $(j_1,\cdots,j_n)$, where $0\leqslant j_i\leqslant a_i$ and $1\leqslant i\leqslant n$. By Lemma \ref{lema-gamma-n-1}(d) and \eqref{equ-chi-choice}, we have
\begin{equation}\label{equ-zeta-n}
\Big|\im\zeta_n-\frac{1}{2\pi}\MB(\alpha_{n+1})-M\Big|\leqslant D_3+\frac{1}{2} \text{\quad and\quad} 1\leqslant \re\zeta_n\leqslant a_{n+1}+\kc_1+2.
\end{equation}

By Proposition \ref{prop-Cherahi-nest}(b), each Siegel disk $\Delta_n$ is compactly contained in the domain of definition of $f_n$. For each $n\in\N$, $\Phi_n^{-1}$ is defined in $\MD_n$ (see Lemma \ref{lema:Phi-inverse}). We denote
\begin{equation}
\Delta_n':=\{\zeta\in\MD_n:\Phi_n^{-1}(\zeta)\in\Delta_n\}.
\end{equation}
By the definition of $\MD_n$, we have $\Phi_n^{-1}(\Delta_n')=\Delta_n$ and $\Expo(\Delta_n')=\Delta_{n+1}$.
By Lemma \ref{lema-fixed-disk}, the inner radius of the Siegel disk of $f_{n+1}$ is $c_{n+1}e^{-\MB(\alpha_{n+1})}$, where $c_{n+1}\in[D_7^{-1},D_7]$ and $D_7>1$ is a universal constant. According to the definition of near-parabolic renormalization $f_{n+1}=\MMR f_n$, there exists a point $\zeta_n'\in\partial\Delta_n'\cap\Expo^{-1}(\partial\Delta_{n+1})$ such that
\begin{equation}\label{equ-y-widehat}
|\re(\zeta_n-\zeta_n')|\leqslant \frac{1}{2} \quad\text{and}\quad \im \zeta_n'=\frac{1}{2\pi}\MB(\alpha_{n+1})-\frac{1}{2\pi}\log\frac{27 c_{n+1}}{4}.
\end{equation}

Let $[\zeta_n,\zeta_n']$ be the closed segment connecting $\zeta_n$ with $\zeta_n'$. By \eqref{equ-Siegel-below}, we have $[\zeta_n,\zeta_n')\subset\Delta_n'$.
By Lemma \ref{lema:D-n}, Lemma \ref{lema:comp-inclu} and \eqref{equ:gamma-ti-n-1}, we have $B_{\delta}([\zeta_n,\zeta_n'])\subset\MD_n'$ for $\delta=\min\{\delta_0,1/4\}$. Combining \eqref{equ-zeta-n} and \eqref{equ-y-widehat}, there exists a constant $A'>0$ which is independent of $n$ so that $|\zeta_n'-\zeta_n|\leqslant A'$. According to Lemma \ref{lema:exp-conv}, there exist two constants $A>0$ and $0<\nu<1$ which are independent of $n$ such that
\begin{equation}
\len(\chi_{1,j_1}\circ\cdots\circ\chi_{n,j_n}([\zeta_n,\zeta_n']))\leqslant A\cdot\nu^n,
\end{equation}
where $0\leqslant j_i\leqslant a_i$ and $1\leqslant i\leqslant n$. Denote $\zeta_0':=\chi_{1,j_1}\circ\cdots\circ\chi_{n,j_n}(\zeta_n')$. Then $|\zeta_0-\zeta_0'|\leqslant A\cdot\nu^n$. Since $\zeta_0'\in\partial\Delta_0'$, it implies that
\begin{equation}\label{equ:zeta-0}
\dist(\zeta_0,\partial\Delta_0')\leqslant A\cdot\nu^n .
\end{equation}
For any $t_0\in[0,1]$ and $n\geqslant 1$, we choose $\zeta_0=\zeta_0^{(n)}:=\gamma_0^{n+1}(t_0)$. By \eqref{equ:zeta-0} we have $\gamma_0^\infty(t_0)\in\partial\Delta_0'$. By the arbitrariness of $t_0\in[0,1]$, it follows that $\gamma_0^\infty\subset\partial\Delta_0'$. Therefore we have $\Phi_0^{-1}(\gamma_0^\infty)\subset\partial\Delta_0$.

By Lemma \ref{lema-gamma-n-i}(c), $\Phi_0^{-1}(\gamma_0^n)$ is a continuous closed curve for all $n\geqslant 0$.
Since $\gamma_0^n(t)$ converges uniformly to the limit $\gamma_0^\infty(t)$ on $[0,1]$ as $n\to\infty$, it follows that $\Phi_0^{-1}(\gamma_0^\infty)$ is a continuous closed curve which separates $\Delta_0$ from each component of $U_0\setminus\overline{\Delta}_0$, where $U_0$ is the domain of definition of $f_0$. In particular, we have $\Phi_0^{-1}(\gamma_0^\infty)=\partial\Delta_0$.
\end{proof}

\begin{proof}[{Proof of the the first part of the Main Theorem}]
Suppose $f_0\in\IS_\alpha\cup\{Q_\alpha\}$, where $\alpha\in\MB_N$ with $N$ sufficiently large. By Proposition \ref{prop-tend-to-bdy}, the boundary of the Siegel disk $\partial\Delta_0=\Phi_0^{-1}(\gamma_0^\infty)$ of $f_0$ is connected and locally connected. On the other hand, the Siegel disk $\Delta_0$ is compactly contained in the domain of definition of $f_0$ by Proposition \ref{prop-Cherahi-nest}(b). By the definition of $\Delta_0$, there exists a conformal map $\phi:\D\to\Delta_0$ so that $f_0\circ\phi(w)=\phi(e^{2\pi\ii\alpha}w)$. According to Carath\'{e}odory, the map $\phi$ can be extended continuously to $\phi:\OD\to\overline{\Delta}_0$.

For each $\theta\in[0,2\pi)$, let $\gamma_\theta:=\{\phi(r e^{\ii\theta}):0\leqslant r\leqslant 1\}$ be the internal ray of $\Delta_0$. Suppose there are two different rays $\gamma_{\theta_1}$ and $\gamma_{\theta_2}$ landing at a common point on $\partial\Delta_0$, i.e., $\phi(e^{\ii\theta_1})=\phi(e^{\ii\theta_2})$. Then $\gamma_{\theta_1}\cup \gamma_{\theta_2}$ is a Jordan curve contained in $\overline{\Delta}_0$. By the maximum modulus principle, $\{f_0^{\circ n}\}_{n\in\N}$ forms a normal family in the bounded domain $D_{\theta_1,\theta_2}$ which is bounded by $\gamma_{\theta_1}\cup \gamma_{\theta_2}$. This implies that $D_{\theta_1,\theta_2}$ is contained in the Fatou set and hence contained in $\Delta_0$. However, by Riesz brothers' theorem, $\phi$ must be a constant. This is a contradiction and each point in $\partial\Delta_0$ is the landing point of exactly one internal ray. Hence $\partial\Delta_0$ is a Jordan curve.
\end{proof}

\section{A Jordan arc and a new class of irrationals}\label{sec-canonical-Jordan}

In this section, we first define a Jordan arc $\Gamma$ connecting the origin with the critical value $\cv=-4/27$ in the domain of definition of $f\in\IS_\alpha\cup\{Q_\alpha\}$ with $\alpha\in\HT_N$. In particular, this arc is contained in $\MP_f$. Then we define a new class of irrational numbers based on the mapping relations between the different levels of the renormalization.

\subsection{A Jordan arc corresponding to $\alpha\in \HT_N$}\label{subsec-Jordan-arc}

Let $f\in\IS_\alpha\cup\{Q_\alpha\}$ with $\alpha\in\HT_N$, where $N\geqslant 1/\varepsilon_4$ is assumed in \S\ref{subsec-basic-defi}. We define a half-infinite strip
\begin{equation}\label{equ-E}
\mho:=\{\zeta\in\C:1/4<\re\zeta< 7/4 \text{ and } \im\zeta>-2\}
\end{equation}
and a topological triangle 
\begin{equation}
\MQ_f:=\{z\in\MP_f:\Phi_f(z)\in \mho\}.
\end{equation}

\begin{lem}\label{lema-height}
There exists $\varepsilon_4'\in(0,\varepsilon_4]$ such that for all $f\in\IS_\alpha$ with $\alpha\in(0,\varepsilon_4']$,
\begin{equation}\label{equ-M-c-subset}
\overline{\MQ}_f\setminus\{0\} \subset \D(0,\tfrac{4}{27}e^{3\pi})\setminus [0,\tfrac{4}{27}e^{3\pi}).
\end{equation}
\end{lem}


We postpone the proof of Lemma \ref{lema-height} to Appendix \ref{sec-arc-straight}. The inclusion relation \eqref{equ-M-c-subset} is proved for the maps in $\IS_0$ first and then a continuity argument is used.

\medskip
For $f_0\in\IS_\alpha\cup\{Q_\alpha\}$ with $\alpha\in\HT_N$, let $f_n:=\MMR f_{n-1}$ be the maps defined by the renormalization operator inductively, where $n\geqslant 1$. In the following, we always assume that $N\geqslant 1/\varepsilon_4'$ and denote $\MQ_n:=\MQ_{f_n}$. For $X\subset\C$ and $\delta>0$, we denote $B_\delta(X):=\bigcup_{z\in X}\D(z,\delta)$.

\begin{cor}\label{cor-anti-holo}
For each $n\geqslant 1$, there exists a unique anti-holomorphic inverse branch of the modified exponential map $\Expo$:
\begin{equation}
\Log:\MQ_n\to\Phi_{n-1}(\MQ_{n-1})=\mho,
\end{equation}
such that $\Log(-\tfrac{4}{27})=1$. Moreover, $B_{1/4}(\Log(\overline{\MQ}_n\setminus\{0\}))\subset\mho$ and $\Phi_{n-1}^{-1}\circ\Log:\overline{\MQ}_n\setminus\{0\}\to\MQ_{n-1}$ is well defined.
\end{cor}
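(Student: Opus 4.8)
The plan is to realise $\Log$ as the inverse of $\Expo$ on one explicit half-strip, the only non-formal input being the location of $\overline{\MQ}_n$ provided by Lemma~\ref{lema-height}. Throughout, $N$ is large enough that $\lfloor\tfrac{1}{\alpha_n}\rfloor-\kc=a_{n+1}-\kc$ and $a_n-\kc$ both exceed $\tfrac74$; hence $\mho\subset\Phi_n(\MP_n)$ and $\mho\subset\Phi_{n-1}(\MP_{n-1})$, so that $\MQ_n=\Phi_n^{-1}(\mho)$, $\Phi_{n-1}(\MQ_{n-1})=\mho$, and $\Phi_{n-1}^{-1}$ is defined on all of $\mho$ with $\Phi_{n-1}^{-1}(\mho)=\MQ_{n-1}$.

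First I would write $\Expo$ in coordinates: for $\zeta=x+\ii y$,
\begin{equation*}
\Expo(\zeta)=-\tfrac{4}{27}\,\overline{e^{2\pi\ii\zeta}}=-\tfrac{4}{27}\,e^{-2\pi\ii\overline{\zeta}}=\tfrac{4}{27}\,e^{-2\pi y}\,e^{\ii\pi(1-2x)}.
\end{equation*}
So $\Expo:\C\to\C\setminus\{0\}$ is an anti-holomorphic covering whose deck group is generated by $\zeta\mapsto\zeta+1$, and $\Expo(1)=-\tfrac{4}{27}=\cv$. Let $D:=\D(0,\tfrac{4}{27}e^{3\pi})\setminus[0,\tfrac{4}{27}e^{3\pi})$ be the slit disk of Lemma~\ref{lema-height}; it is simply connected and omits $0$, so there is a unique anti-holomorphic branch $\Log$ of $\Expo^{-1}$ on $D$ with $\Log(\cv)=1$, and its image is the connected component $\Sigma$ of $\Expo^{-1}(D)$ containing $1$. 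From the displayed formula, $\Expo^{-1}(D)=\{x+\ii y:y>-\tfrac32,\ x\notin\tfrac12+\Z\}$, whose component through $1$ is the open half-strip
\begin{equation*}
\Sigma=\{\,x+\ii y\ :\ \tfrac12<x<\tfrac32,\ \ y>-\tfrac32\,\}.
\end{equation*}

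Finally I would restrict $\Log$ to $\MQ_n$ and to $\overline{\MQ}_n\setminus\{0\}$. Since $f_n\in\IS_{\alpha_n}$ for $n\ge1$ and $\alpha_n\le 1/N\le r_4'$, Lemma~\ref{lema-height} gives $\overline{\MQ}_n\setminus\{0\}\subset D$ (in particular $\MQ_n\subset D$); moreover $\MQ_n=\Phi_n^{-1}(\mho)$ is connected and simply connected, $0\notin\MQ_n$, and $\cv\in\MQ_n$ because $\Phi_n(\cv)=1\in\mho$. Hence $\Log$ is a well-defined anti-holomorphic branch of $\Expo^{-1}$ on $\overline{\MQ}_n\setminus\{0\}$ (and on $\MQ_n$) with $\Log(\cv)=1$ and image contained in $\Sigma$. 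Since $B_{1/4}(\Sigma)\subset\{x+\ii y:\tfrac14<x<\tfrac74,\ y>-\tfrac74\}\subset\mho$, this yields $B_{1/4}\big(\Log(\overline{\MQ}_n\setminus\{0\})\big)\subset\mho$; in particular $\Log(\MQ_n)\subset\mho=\Phi_{n-1}(\MQ_{n-1})$, and $\Phi_{n-1}^{-1}\circ\Log$ maps $\overline{\MQ}_n\setminus\{0\}$ into $\Phi_{n-1}^{-1}(\Sigma)\subset\MQ_{n-1}$, so it is well defined. For uniqueness, any anti-holomorphic branch of $\Expo^{-1}$ on the connected set $\MQ_n$ differs from $\Log$ by a deck transformation $\zeta\mapsto\zeta+k$, $k\in\Z$, and the normalization $\Log(\cv)=1$ forces $k=0$. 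There is essentially no obstacle specific to this corollary: the one step carrying genuine content is the inclusion $\overline{\MQ}_n\setminus\{0\}\subset D$, i.e.\ Lemma~\ref{lema-height}, whose proof is postponed to \S\ref{subsec-position}, and everything else is a direct computation with the half-strip $\Sigma$.
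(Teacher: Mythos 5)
Your proof is correct and takes essentially the same route as the paper's: both arguments lean on Lemma~\ref{lema-height} for the inclusion $\overline{\MQ}_n\setminus\{0\}\subset\D(0,\tfrac{4}{27}e^{3\pi})\setminus[0,\tfrac{4}{27}e^{3\pi})$, lift that slit disk to the half-strip $\{\tfrac12<\re\zeta<\tfrac32,\ \im\zeta>-\tfrac32\}$, and then check that a $\tfrac14$-neighborhood stays inside $\mho$. The only (minor) stylistic difference is that you define $\Log$ on the full slit disk $D$ and then restrict, whereas the paper defines it directly on the simply connected set $\overline{\MQ}_n\setminus\{0\}$; your version is marginally cleaner since it sidesteps the need to observe that $\overline{\MQ}_n\setminus\{0\}$ is simply connected, but the content is the same.
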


\begin{proof}
Since $\Expo$ takes the value $-4/27$ at each integer, it follows that $\Expo$ has an inverse branch $\Log$ defined on $\overline{\MQ}_n\setminus\{0\}$ such that $\Log(-4/27)=1$ since $\overline{\MQ}_n\setminus\{0\}$ is simply connected and avoids the origin. By Lemma \ref{lema-height}, we have $\re\Log(\overline{\MQ}_n\setminus\{0\})\subset (1/2,3/2)$ and $\im\Log(\overline{\MQ}_n\setminus\{0\})> -3/2$. Therefore, $B_{1/4}(\Log(\overline{\MQ}_n\setminus\{0\}))$ is contained in $\mho$ and $\Phi_{n-1}^{-1}\circ\Log:\overline{\MQ}_n\setminus\{0\}\to\MQ_{n-1}$ is well defined.
\end{proof}

Define a half-infinite strip
\begin{equation}\label{equ-mho-new}
\mho':=\{\zeta\in\C:1/2<\re\zeta< 3/2 \text{ and } \im\zeta>-7/4\}\subset\mho
\end{equation}
and a topological triangle for every $n\geqslant 0$:
\begin{equation}
\MQ_n':=\{z\in\MP_n:\Phi_n(z)\in \mho'\}.
\end{equation}

\begin{defi}[see Figure \ref{Fig-K-n-Gamma}]
Let $K_0:=\MQ_0'$. For each $n\geqslant 1$, define
\begin{equation}
K_n:=\Phi_0^{-1}\circ\Log\circ\cdots\circ\Phi_{n-1}^{-1}\circ\Log(\MQ_n').
\end{equation}
By Corollary \ref{cor-anti-holo}, $K_{n+1}\subset K_n$ for all $n\geqslant 0$, the critical value $\cv=-4/27$ is contained in the interior of $K_n$ and $0\in\partial K_n$. Define
\begin{equation}\label{equ-Gamma}
\Gamma:=\bigcap_{n\geqslant 0}K_n.
\end{equation}
\end{defi}

\begin{figure}[!htpb]
  \setlength{\unitlength}{1mm}
  \centering
  \includegraphics[width=0.6\textwidth]{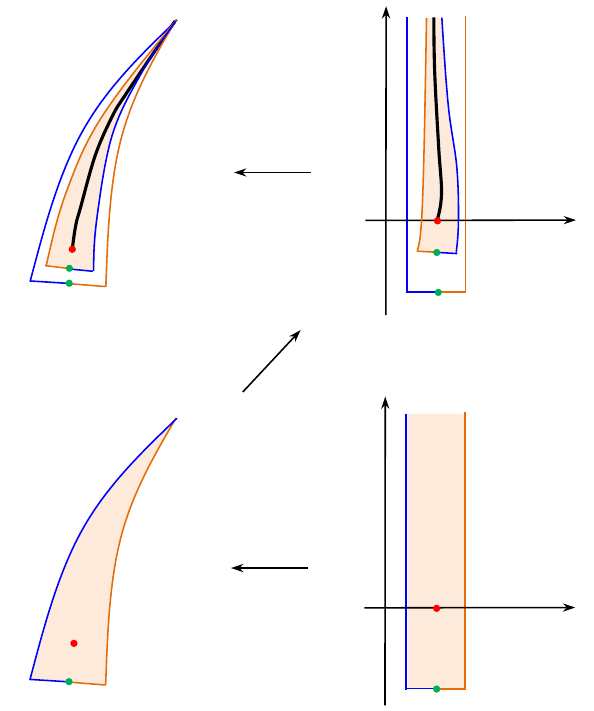}
  \put(-21,9.5){$1$}
  \put(-26.2,9.5){$\frac{1}{2}$}
  \put(-16,9.5){$\frac{3}{2}$}
  \put(-29,10){$0$}
  \put(-32,2.5){$-\frac{7}{4}$}
  \put(-15.5,25){$\mho'$}
  \put(-43.5,21){$\Phi_1^{-1}$}
  \put(-58,25){$\MQ_1'$}
  \put(-68,6){$\cv$}
  \put(-53,34.5){$0$}
   \put(-21,59.5){$1$}
  \put(-26,59){$\frac{1}{2}$}
  \put(-16,59){$\frac{3}{2}$}
  \put(-29,59.5){$0$}
  \put(-32,52.5){$-\frac{7}{4}$}
  \put(-15.5,74.5){$\mho'$}
  \put(-43.5,70.7){$\Phi_0^{-1}$}
  \put(-61,59){$K_0=\MQ_0'$}
  \put(-69,62){$K_1$}
  \put(-68.3,57){$\cv$}
  \put(-53,85){$0$}
   \put(-67,68){$\Gamma$}
   \put(-39,42){$\Log$}
  \caption{A sketch of the renormalization microscope between levels $0$ and $1$. The sets $\Gamma$, $\mho'$, $\MQ_n'$, $K_n$ with $n=0,1$ and some special points are marked.}
  \label{Fig-K-n-Gamma}
\end{figure}

\begin{lem}\label{lem-Jordan-arc}
The set $\Gamma\cup\{0\}$ is a Jordan arc connecting $\cv=-4/27$ with $0$.
\end{lem}

\begin{proof}
The general idea of the proof is to use the uniform contraction with respect to the hyperbolic metrics to prove that $\Gamma\cup\{0\}$ is locally connected and then prove that it must be a Jordan arc. Let us prove it in details.

\medskip
\textbf{Step 1}: We first define two continuous curves $\gamma_{0,\pm}^0:[0,+\infty)\to\mho$ as
\begin{equation}
\gamma_{0,\pm}^0(t):=
\left\{
\begin{aligned}
& 1\pm\tfrac{1}{2}+(t-\tfrac{11}{4})\,\ii  ~~~&\text{if }& t\in[1,+\infty),\\
& 1\pm\tfrac{t}{2}-\tfrac{7}{4}\,\ii  ~~~&\text{if }& t\in[0,1).
\end{aligned}
\right.
\end{equation}
Then $\gamma_{0,+}^0$ and $\gamma_{0,-}^0$ have the same initial point $\gamma_{0,\pm}^0(0)=1-\tfrac{7}{4}\ii$ and $\gamma_{0,+}^0\cup \gamma_{0,-}^0=\partial\mho'$, where $\mho'$ is defined in \eqref{equ-mho-new}.
For $\alpha\in(0,1)$, we define
\begin{equation}\label{equ-varphi}
\varphi_\alpha(t):=
\left\{
\begin{aligned}
& \frac{1}{\alpha}\Big(t-\frac{1}{2\pi}\log\frac{1}{\alpha}+1\Big) ~~~&\text{if }& t\geqslant\frac{1}{2\pi}\log\frac{1}{\alpha},\\
& e^{2\pi t}  ~~~&\text{if }& t< \frac{1}{2\pi}\log\frac{1}{\alpha}.
\end{aligned}
\right.
\end{equation}
It is easy to see that $\varphi_\alpha$ is continuous on $\R$ and strictly increasing. For $n\geqslant 1$, we define $\varphi_n:=\varphi_{\alpha_n}$. Then $\varphi_n\circ\cdots\circ\varphi_1(t)\to +\infty$ as $n\to\infty$ for all $t\in\R$.

In the following, we define two sequences of continuous curves $(\gamma_{n,\pm}^0)_{n\geqslant 0}$ inductively. For $n\geqslant 1$, suppose $\gamma_{n-1,\pm}^0:[0,+\infty)\to\partial\mho'$ has been defined. We define $\gamma_{n,\pm}^0:[0,+\infty)\to\partial\mho'$ as
\begin{equation}\label{equ-gamma-n-plus}
\gamma_{n,\pm}^0(t):=
\left\{
\begin{aligned}
& 1\pm\tfrac{1}{2}+\big(\varphi_n(\im \gamma_{n-1,\pm}^0(t))-e^{-7\pi/2}-\tfrac{7}{4}\big)\,\ii  ~~~&\text{if }& t\in[1,+\infty),\\
& 1\pm\tfrac{t}{2}-\tfrac{7}{4}\,\ii  ~~~&\text{if }& t\in[0,1).
\end{aligned}
\right.
\end{equation}
Note that $\gamma_{n,+}^0(1)=\tfrac{3}{2}-\tfrac{7}{4}\,\ii$ and $\gamma_{n,-}^0(1)=\tfrac{1}{2}-\tfrac{7}{4}\,\ii$.
Then both $\gamma_{n,+}^0:[0,+\infty)\to\partial\mho'$ and $\gamma_{n,-}^0:[0,+\infty)\to\partial\mho'$ are continuous injections and they have the same initial point $\gamma_{n,\pm}^0(0)=1-\tfrac{7}{4}\ii$. Moreover, $\gamma_{n,+}^0\cup \gamma_{n,-}^0=\partial\mho'$.

\medskip
For $t\in[0,+\infty)$, all $n\geqslant 1$ and $1\leqslant i\leqslant n$, by Corollary \ref{cor-anti-holo} the following curves are well-defined:
\begin{equation}\label{equ-gam-anti}
\gamma_{n-i,\pm}^i(t):=
\left\{
\begin{aligned}
& \Log\circ\Phi_{n-i+1}^{-1}\circ\cdots\circ\Log\circ\Phi_n^{-1}(\gamma_{n,\pm}^0(t))  & ~~~\text{if }& i \text{ is even},\\
& \Log\circ\Phi_{n-i+1}^{-1}\circ\cdots\circ\Log\circ\Phi_n^{-1}(\gamma_{n,\mp}^0(t))  & ~~~\text{if }& i \text{ is odd}.
\end{aligned}
\right.
\end{equation}
In particular, $\gamma_{n-i,\pm}^i\subset \overline{\mho'}$ for every $0\leqslant i\leqslant n$. Define
\begin{equation}
\Gamma_{n-i,\pm}^i(t):=\Phi_{n-i}^{-1}(\gamma_{n-i,\pm}^i(t)), \text{ where } t\in[0,+\infty).
\end{equation}
Then $\Gamma_{n-i,+}^i\cup\{0\}$ and $\Gamma_{n-i,-}^i\cup\{0\}$ are Jordan arcs, and $\Gamma_{n-i,+}^i\cup\Gamma_{n-i,-}^i\cup\{0\}$ is a Jordan curve\footnote{As before we use the fact that $\lim_{\im\zeta\to+\infty}\Phi_{n-i}^{-1}(\zeta)=0$, where $\zeta\in\Phi_{n-i}(\MP_{n-i})$.}. In particular, we have $\Gamma_{0,+}^n\cup\Gamma_{0,-}^n\cup\{0\}=\partial K_n$ and two sequences of continuous curves $\gamma_{0,\pm}^n:[0,+\infty)\to\overline{\mho'}$, where $n\in\N$. In the following we prove that $\gamma_{0,\pm}^n(t)$ and $\Gamma_{0,\pm}^n(t)$ converge uniformly on $[0,+\infty)$ as $n\to\infty$.

\medskip
\textbf{Step 2}: We first estimate the distance between $\gamma_{n-1,\pm}^0(t)$ and $\gamma_{n-1,\pm}^1(t)$ for all $n\geqslant 1$ and $t\in[0,+\infty)$. Let $t_n\in(1,+\infty)$ be the unique parameter such that
\begin{equation}
\im\gamma_{n,\pm}^0(t_n)=\varphi_n(\im \gamma_{n-1,\pm}^0(t_n))-e^{-7\pi/2}-\tfrac{7}{4}=\tfrac{1}{\alpha_n}.
\end{equation}
Then we have $\tfrac{1}{2\pi}\log\tfrac{1}{\alpha_n}<\im \gamma_{n-1,\pm}^0(t_n)<\tfrac{1}{2\pi}\log\tfrac{1}{\alpha_n}+2\alpha_n$. By definition, we have
\begin{equation}
\begin{split}
|\gamma_{n-1,\pm}^0(t)-\gamma_{n-1,\pm}^1(t)|
=&~|\gamma_{n-1,\pm}^0(t)-\Log\circ\Phi_n^{-1}(\gamma_{n,\mp}^0(t))|\\
\leqslant &~1+|\im\gamma_{n-1,\pm}^0(t)-\im\Log\circ\Phi_n^{-1}(\gamma_{n,\mp}^0(t))|.
\end{split}
\end{equation}
If $t\geqslant t_n$, then $\im\gamma_{n,\pm}^0(t)\geqslant\tfrac{1}{\alpha_n}$. By \eqref{equ-varphi}, \eqref{equ-gamma-n-plus} and Lemma \ref{lema-key-estimate-lp}(a), we have
\begin{equation}
\begin{split}
&~|\im\gamma_{n-1,\pm}^0(t)-\im\Log\circ\Phi_n^{-1}(\gamma_{n,\mp}^0(t))|\\
\leqslant &~D_3+\big|\im\gamma_{n-1,\pm}^0(t)-\alpha_n\im\gamma_{n,\mp}^0(t)-\tfrac{1}{2\pi}\log\tfrac{1}{\alpha_n}\big|\\
\leqslant &~D_3+1+\alpha_n(e^{-7\pi/2}+\tfrac{7}{4})<D_3+2.
\end{split}
\end{equation}
If $t< t_n$, then $\im\gamma_{n,\pm}^0(t)<\tfrac{1}{\alpha_n}$. By \eqref{equ-varphi}, \eqref{equ-gamma-n-plus} and Lemma \ref{lema-key-estimate-lp}(b), there exist two universal constants $C_1$, $C_2\geqslant 1$ such that
\begin{equation}
\begin{split}
&~|\im\gamma_{n-1,\pm}^0(t)-\im\Log\circ\Phi_n^{-1}(\gamma_{n,\mp}^0(t))|\\
\leqslant &~D_3+\big|\im\gamma_{n-1,\pm}^0(t)-\tfrac{1}{2\pi}\log(1+|\gamma_{n,\mp}^0(t)|)\big|\\
\leqslant &~D_3+C_1+\big|\im\gamma_{n-1,\pm}^0(t)-\tfrac{1}{2\pi}\log(1+|\im\gamma_{n,\mp}^0(t)|)\big|
\leqslant D_3+C_1+C_2.
\end{split}
\end{equation}
Therefore, for all $n\geqslant 1$ and $t\in[0,+\infty)$, we have
\begin{equation}\label{equ:gamma-n-1}
|\gamma_{n-1,\pm}^0(t)-\gamma_{n-1,\pm}^1(t)|\leqslant D_3+C_1+C_2+1.
\end{equation}

\textbf{Step 3}:
Let $\rho_\mho(\zeta)|\dd \zeta|$ and $\rho_n(z)|\dd z|$ be the hyperbolic metrics of $\mho$ and $\MQ_n$ respectively.
Note that $\gamma_{n-1,\pm}^0$, $\gamma_{n-1,\pm}^1\subset\overline{\mho'}$ and $B_{1/4}(\overline{\mho'})\subset\mho$. By \eqref{equ:gamma-n-1}, there exists $C_3>0$ such that the hyperbolic distance between $\gamma_{n-1,\pm}^0$ and $\gamma_{n-1,\pm}^1$ satisfies
\begin{equation}\label{equ:dist-rho}
\dist_{\rho_\mho}(\gamma_{n-1,\pm}^0(t),\gamma_{n-1,\pm}^1(t))\leqslant C_3 \text{\quad for any } n\geqslant 1 \text{ and } t\in[0,+\infty).
\end{equation}
According to Corollary \ref{cor-anti-holo}, for $1\leqslant i\leqslant n$, each map $\Log\circ\Phi_i^{-1}:(\mho,\rho_\mho)\to (\mho,\rho_\mho)$ can be decomposed as:
\begin{equation}
\begin{split}
\Log\circ\Phi_i^{-1}:(\mho,\rho_\mho)
&~\xlongrightarrow{\Phi_i^{-1}} (\MQ_i,\rho_i) \xlongrightarrow{\Log} (\Log(\MQ_i),\tilde{\rho}_i) \\
&~\overset{inc.}{\hookrightarrow} (B_{1/4}(\Log(\MQ_i)),\hat{\rho}_i) \overset{inc.}{\hookrightarrow} (\mho,\rho_\mho),
\end{split}
\end{equation}
where $\tilde{\rho}_i$ and $\hat{\rho}_i$ are hyperbolic metrics of $\Log(\MQ_i)$ and $B_{1/4}(\Log(\MQ_i))$ respectively.
Since $\diam(\re(\Log(\MQ_i)))\leqslant 1$, by Lemma \ref{lema-uni-con-prep}, the inclusion map
\begin{equation}
(\Log(\MQ_i),\tilde{\rho}_i) \overset{inc.}{\hookrightarrow} (B_{1/4}(\Log(\MQ_i)),\hat{\rho}_i)
\end{equation}
is uniformly contracting with respect to their hyperbolic metrics. Since $\Phi_i^{-1}$, $\Log$ and the second inclusion map do not expand the hyperbolic metrics, it follows that $\Log\circ\Phi_i^{-1}:(\mho,\rho_\mho)\to (\mho,\rho_\mho)$ is uniformly contracting.

\medskip
By the definition of $\gamma_{0,\pm}^n$, there exists a constant $0<\nu<1$ such that
\begin{equation}
\dist_{\rho_\mho}(\gamma_{0,\pm}^{n-1}(t),\gamma_{0,\pm}^n(t))\leqslant C_3\cdot\nu^{n-1}, \text{ where }n\geqslant 1 \text{ and }t\in[0,+\infty).
\end{equation}
This implies that the hyperbolic distance between $\Gamma_{0,\pm}^{n-1}(t)$ and $\Gamma_{0,\pm}^n(t)$ in $\MQ_0=\Phi_0^{-1}(\mho)$ satisfies
\begin{equation}
\dist_{\rho_0}(\Gamma_{0,\pm}^{n-1}(t),\Gamma_{0,\pm}^n(t))\leqslant C_3\cdot\nu^{n-1}, \text{ where }n\geqslant 1 \text{ and }t\in[0,+\infty).
\end{equation}
Let $\check{\MQ}_0:=B_1(\MQ_0)$ and $\check{\rho}_0(z)|\dd z|$ be the hyperbolic metric of $\check{\MQ}_0$. Then the Euclidean and hyperbolic metrics (with respect to $\check{\rho}_0$) are comparable on $\MQ_0$. According to Schwarz-Pick's lemma, we have $\check{\rho}_0(z)<\rho_0(z)$ for all $z\in\MQ_0$. Therefore, there exists a constant $C_4>0$ such that the distance in the Euclidean metric satisfies
\begin{equation}
|\Gamma_{0,\pm}^{n-1}(t)-\Gamma_{0,\pm}^n(t)|\leqslant C_4\cdot\nu^{n-1}, \text{ where }n\geqslant 1 \text{ and }t\in[0,+\infty).
\end{equation}
Therefore, the following convergence is uniform for $t\in [0,+\infty)$:
\begin{equation}
\Gamma_{0,\pm}^\infty(t):=\lim_{n\to\infty} \Gamma_{0,\pm}^n(t).
\end{equation}
Note that $1\in\mho$ and $\Log\circ\Phi_n^{-1}(1)=1$. By the uniformly contracting of $\Log\circ\Phi_i^{-1}:(\mho,\rho_\mho)\to (\mho,\rho_\mho)$ for all $1\leqslant i\leqslant n$, we have
\begin{equation}
\begin{split}
\lim_{n\to\infty}\Gamma_{0,\pm}^n(0)
=&~\lim_{n\to\infty}\Phi_0^{-1}\circ\Log\circ\Phi_1^{-1}\circ\cdots\circ\Log\circ\Phi_n^{-1}(1-\tfrac{7}{4}\ii) \\
=&~\Phi_0^{-1}(1)=-\tfrac{4}{27}.
\end{split}
\end{equation}

Since $\gamma_{n-1,\pm}^0 \subset\overline{\mho'}$ and $B_{1/4}(\overline{\mho'})\subset\mho$, there exists a constant $C_3'>0$ such that
\begin{equation}
\dist_{\rho_\mho}(\gamma_{n-1,+}^0(t),\gamma_{n-1,-}^0(t))\leqslant C_3' \text{\quad for any } n\geqslant 1 \text{ and } t\in[0,+\infty).
\end{equation}
By a similar argument as above, we have
\begin{equation}
\Gamma_{0,+}^\infty(t)=\Gamma_{0,-}^\infty(t), \text{\quad where } t\in [0,+\infty).
\end{equation}
Note that $\Gamma$ is the intersection of the nested sequence $(K_n)_{n\geqslant 0}$, where $K_n$ is the bounded component of $\C\setminus(\Gamma_{0,+}^n\cup\Gamma_{0,-}^n\cup\{0\})$ for all $n\geqslant 0$. Therefore, $\Gamma=\Gamma_{0,+}^\infty=\Gamma_{0,-}^\infty$ and $\Gamma\cup\{0\}$ is a Jordan arc connecting $-4/27$ with $0$.
\end{proof}

\subsection{Dynamical behavior of the points on the arcs}

Let $\phi_0:=\textup{id}$. For each $n\geqslant 1$, we denote
\begin{equation}
\phi_n:=\Expo\circ\Phi_{n-1}\circ\cdots\circ\Expo\circ\Phi_0.
\end{equation}
Let $\Gamma$ be the Jordan arc defined in \eqref{equ-Gamma}. By the proof of Lemma \ref{lem-Jordan-arc}, $\phi_n$ can be defined on $\Gamma_0:=\Gamma$ since
\begin{equation}
\Gamma_n:=\phi_n(\Gamma_0)\subset \MQ_n'=\Phi_n^{-1}(\mho'), \text{ where }n\geqslant 1.
\end{equation}
Note that the restriction of $\Expo\circ\Phi_{n-1}$ on $\Gamma_{n-1}$ is a homeomorphism. Hence each $\Gamma_n\cup\{0\}$ is also a Jordan arc connecting $-\tfrac{4}{27}$ with $0$ in the dynamical plane of $f_n$. For each $n\geqslant 1$, the map $\phi_n:\Gamma_0\to \Gamma_n$ can be extended homeomorphically to $\phi_n:\Gamma_0\cup\{0\}\to \Gamma_n\cup\{0\}$ such that $\phi_n(-\tfrac{4}{27})=-\tfrac{4}{27}$ and $\phi_n(0)=0$.
Moreover,
\begin{equation}\label{equ-gamma-n}
\gamma_n:=\Phi_n(\Gamma_n)
\end{equation}
is an unbounded arc in $\mho'$ with the initial point $1$.

\begin{defi}
For $n\geqslant 1$, we define
\begin{equation}\label{equ-s-alpha-n}
s_{\alpha_n}:=\Phi_n\circ \Expo:\gamma_{n-1}\to\gamma_n.
\end{equation}
Then $s_{\alpha_n}$ is a homeomorphism with $s_{\alpha_n}(1)=1$.
\end{defi}

In the following, we assume that $\alpha=\alpha_0\in\MB_N$, where $\MB_N$ is the set of high type Brjuno numbers defined in \eqref{equ:Brjuno}.
Let $\MB(\alpha_n)$ be the Brjuno sum defined in \eqref{equ:Brj-Yoccoz-n}.

\begin{defi}
For $n\geqslant 0$, we define
\begin{equation}\label{equ-height-new}
\widetilde{\MB}(\alpha_n):=\frac{\MB(\alpha_n)}{2\pi}+ M,
\end{equation}
where $M\geqslant 1$ is a constant which will be determined in a moment.
\end{defi}

\begin{lem}\label{lema-all-greater}
There exists a constant $M_0>1$ such that if $M\geqslant M_0$, for $\zeta\in\gamma_{n-1}$ with $\im\zeta\geqslant\widetilde{\MB}(\alpha_n)$, then $\im\,s_{\alpha_n}(\zeta)\geqslant\widetilde{\MB}(\alpha_{n+1})$, where $n\geqslant 1$.
\end{lem}

\begin{proof}
Let $D_4>0$ be the constant introduced in Lemma \ref{lema-key-esti-inverse}. If $M\geqslant D_4$, then
\begin{equation}
\widetilde{\MB}(\alpha_n)=\frac{\MB(\alpha_n)}{2\pi}+ M>\frac{1}{2\pi}\log\frac{1}{\alpha_n}+D_4.
\end{equation}
By Lemma \ref{lema-key-esti-inverse}(a), if $M\geqslant 2D_5$ and $\im\zeta\geqslant\widetilde{\MB}(\alpha_n)$, then
\begin{equation}
\begin{split}
\im\,s_{\alpha_n}(\zeta)
&~\geqslant \frac{1}{\alpha_n}\left(\im\zeta-\frac{1}{2\pi}\log\frac{1}{\alpha_n}-D_5\right)
\geqslant \frac{1}{\alpha_n}\left(\widetilde{\MB}(\alpha_n)-\frac{1}{2\pi}\log\frac{1}{\alpha_n}-D_5\right) \\
&~=\widetilde{\MB}(\alpha_{n+1})+\frac{1}{\alpha_n}\big((1-\alpha_n) M- D_5\big)\geqslant \widetilde{\MB}(\alpha_{n+1}).
\end{split}
\end{equation}
Then the lemma follows by setting $M_0:=\max\{D_4,2D_5\}$.
\end{proof}

Since $\alpha\in\MB_N$, every $f_0\in\IS_\alpha\cup\{Q_\alpha\}$ has a Siegel disk $\Delta_0$ centered at the origin.
Let $D_7> 1$ be the universal constant in Lemma \ref{lema-fixed-disk}. In the following we fix
\begin{equation}\label{equ:M}
M\geqslant \max\left\{M_0,\frac{1}{2\pi}\log\frac{27 D_7}{4} \right\}.
\end{equation}
Let $\Gamma_0\cup\{0\}$ be the Jordan arc connecting the critical value $\cv=-\frac{4}{27}$ with $0$ corresponding to $f_0$ (see Lemma \ref{lem-Jordan-arc}). For a given point $z_0\in\Gamma_0$, let $(\zeta_n)_{n\geqslant 0}$ be the sequence defined by
\begin{equation}\label{equ-sequence}
\zeta_0:=\Phi_0(z_0)\in\gamma_0 \text{\quad and\quad} \zeta_n:=s_{\alpha_n}(\zeta_{n-1})\in\gamma_n \text{\quad for } n\geqslant 1.
\end{equation}

\begin{lem}\label{lema-eventually-above}
If $z_0\in\Gamma_0\cap\Delta_0$, then there exists $n_0\geqslant 0$ such that $\im \zeta_n\geqslant \widetilde{\MB}(\alpha_{n+1})$ for all $n\geqslant n_0$.
\end{lem}

\begin{proof}
Let $z_0\in\Gamma_0\cap\Delta_0$. By Lemma \ref{lema-fixed-disk}, for every $n\in\N$, the inner radius of the Siegel disk of $f_n$ is $c_ne^{-\MB(\alpha_n)}$, where $c_n\in[1/D_7,D_7]$. Let $\mho$ be the half-infinite strip defined in \eqref{equ-E}. By the definition of near-parabolic renormalization $f_{n+1}=\MMR f_n$, there exists $\widetilde{\zeta}_n\in \overline{\mho'}$ such that $\Expo(\widetilde{\zeta}_n)\in\partial\Delta_{n+1}$ and (see \eqref{equ-y-widehat})
\begin{equation}\label{equ-y-n-hat-2}
\im \widetilde{\zeta}_n=\frac{1}{2\pi}\MB(\alpha_{n+1})-\frac{1}{2\pi}\log\frac{27 c_{n+1}}{4}.
\end{equation}

Assume there exists a subsequence $(n_j)_{j\geqslant 1}$ such that $\im \zeta_{n_j}<  \widetilde{\MB}(\alpha_{n_j+1})=\frac{1}{2\pi}\MB(\alpha_{n_j+1})+M$.
If $\im\zeta_{n_j}\leqslant \im \widetilde{\zeta}_{n_j}$, there exists $\zeta'_{n_j}\in\Phi_{n_j}(\partial\Delta_{n_j}\cap\MP_{n_j})\cap \overline{\mho'}$ with
$\im\zeta_{n_j}'=\im\zeta_{n_j}$ such that
\begin{equation}\label{equ:Euc-1}
|\zeta_{n_j}-\zeta_{n_j}'|\leqslant 1.
\end{equation}
If $\im\zeta_{n_j}> \im \widetilde{\zeta}_{n_j}$, we have
\begin{equation}
\frac{1}{2\pi}\MB(\alpha_{n_j+1})-\frac{1}{2\pi}\log\frac{27 c_{n_j+1}}{4}<\im \zeta_{n_j}<  \frac{1}{2\pi}\MB(\alpha_{n_j+1})+M
\end{equation}
and hence
\begin{equation}\label{equ:Euc-2}
|\zeta_{n_j}-\widetilde{\zeta}_{n_j}|^2 \leqslant 1+\Big(M+\frac{1}{2\pi}\log\frac{27 D_7}{4}\Big)^2.
\end{equation}

By \eqref{equ:Euc-1} and \eqref{equ:Euc-2}, for each $\zeta_{n_j}$ with $j\geqslant 1$, one can find a point ($\zeta_{n_j}'$ or $\widetilde{\zeta}_{n_j}$) in $\Phi_{n_j}(\partial\Delta_{n_j}\cap\MP_{n_j})\cap \overline{\mho'}$ such that the hyperbolic distance with respect to $\rho_{\mho}$ between them are uniformly bounded above. By a similar argument to Proposition \ref{prop-tend-to-bdy} based on Lemma \ref{lema-uni-con-prep}, we conclude that $\zeta_0\in \Phi_0(\partial\Delta_0\cap\MP_0)\cap \overline{\mho'}$ and $z_0\in\partial\Delta_0$, which violates our assumption that $z_0\in\Delta_0$. Therefore, there exists $n_0\geqslant 0$ such that $\im \zeta_n\geqslant \widetilde{\MB}(\alpha_{n+1})$ for all $n\geqslant n_0$.
\end{proof}

\begin{lem}\label{lema-in-disk}
$\Gamma_0\cap\partial\Delta_0$ is a singleton. In particular, $\Gamma_0\setminus\{\cv\}\subset\Delta_0$ if and only if $\cv\in\partial\Delta_0$.
\end{lem}

\begin{proof}
Since $\Gamma_0\cup\{0\}$ is a Jodan arc connecting $\cv=-\frac{4}{27}$ with $0$, there exists a homeomorphism $\beta:[0,1]\to\Gamma_0\cup\{0\}$ such that $\beta(0)=\cv$ and $\beta(1)=0$. Assume that $\Gamma_0\cap\partial\Delta_0$ is not a singleton. Then there exist $0\leqslant t_1<t_2<1$ such that
\begin{itemize}
\item $\beta(t_i)\in\partial\Delta_0$ for $i=1,2$; and
\item $\beta([0,t_1])\cap\Delta_0=\emptyset$ and $\beta((t_2,1])\subset\Delta_0$.
\end{itemize}
Let $\Gamma_0':=\beta([t_1,t_2])$ be a subarc of $\Gamma_0$. Then we have the following two cases.

\medskip
(1) Assume $\Gamma_0'\subset\partial\Delta_0$. There exists $z_0\in\Gamma_0'$ such that $f_0^{\circ q_n}(z_0)\in\Gamma_0'$ for some big integer $n$ since the restriction of $f_0$ on $\partial\Delta_0$ is conjugate to the rigid rotation. Denote $\Gamma_n':=\Expo\circ\Phi_{n-1}\circ\cdots\circ\Expo\circ\Phi_0(\Gamma_0')$. Then $\Gamma_n'$ is a Jordan arc contained in $\Gamma_n\subset\MQ_n'$. By Lemma \ref{lema-Cheraghi-2}(a), $\Gamma_n'$ and hence $\Gamma_n$ contains a point $z_n$ and $f_n(z_n)$, which is impossible.

\medskip
(2) Assume $\Gamma_0'\not\subset\partial\Delta_0$. Since $\Phi_n(\Gamma_n)\subset\mho'$, it follows that $f_n(\Gamma_n)$ is well-defined and contained in $\MP_n$. Thus by Lemma \ref{lema-Cheraghi-2}, $\Gamma_0$ (and hence $\Gamma_0'$) can be iterated infinitely many times by $f_0$.
Let $W\neq\Delta_0$ be any bounded component of $\C\setminus(\partial\Delta_0\cup\Gamma_0')$. Since $\partial\Delta_0\cup\Gamma_0'$ and $W$ can be iterated infinitely many times by $f_0$, it follows from the maximum modulus principle that $W$ is contained in the Fatou set of $f_0$. Since $\partial W\cap\partial\Delta_0$ contains a subarc of $\partial\Delta_0$, it follows that $W$ is contained in $\Delta_0$, which is a contradiction.
This finishes the proof that $\Gamma_0\cap\partial\Delta_0$ is a singleton.

\medskip

From $\Gamma_0\setminus\{\cv\}\subset\Delta_0$ we obtain $\cv\in\partial\Delta_0$ immediately. If $\cv\in\partial\Delta_0$, since $\Gamma_0$ is a Jordan arc and $\Gamma_0\cap\partial\Delta_0$ is a singleton, we conclude that $\Gamma_0\setminus\{\cv\}\subset\Delta_0$.
\end{proof}

\subsection{A new class of irrational numbers}\label{subsec-new-class}

For $n\geqslant 1$, let $s_{\alpha_n}:\gamma_{n-1}\to\gamma_n$ be the homeomorphism defined in \eqref{equ-s-alpha-n}.
In the following, we use $\Gamma_\alpha$ (resp. $\gamma_\alpha$) to denote $\Gamma_0$ (resp. $\gamma_0=\Phi_0(\Gamma_0)$) when we want to emphasize the dependence on $\alpha=\alpha_0\in\HT_N$.

\begin{defi}
Let $\widetilde{\MH}_N$ be a subset of $\MB_N$ defined as
\begin{equation}
\widetilde{\MH}_N:=
\left\{\alpha\in\MB_N
\left|
\begin{array}{l}
\forall\,\zeta\in\gamma_\alpha\setminus\{1\}, \,\exists\,n\geqslant 1 \text{ such that}\\
\im s_{\alpha_n}\circ\cdots\circ s_{\alpha_1}(\zeta)\geqslant\widetilde{\MB}(\alpha_{n+1})
\end{array}
\right.
\right\}.
\end{equation}
\end{defi}

In the next section we show that $\widetilde{\MH}_N$ is independent of the choice of $f_0\in\IS_\alpha\cup\{Q_\alpha\}$ by proving that $\widetilde{\MH}_N$ coincides with the set of high type Herman numbers.

\begin{prop}\label{prop-equi-alpha}
The critical value $\cv=-\frac{4}{27}$ is contained in $\partial\Delta_0$ if and only if $\alpha\in\widetilde{\MH}_N$.
\end{prop}

\begin{proof}
For each $\zeta\in\gamma_\alpha\setminus\{1\}$ and $n\geqslant 1$, we denote
\begin{equation}
\zeta_n:=s_{\alpha_n}\circ\cdots\circ s_{\alpha_1}(\zeta).
\end{equation}
Suppose $\alpha\in\widetilde{\MH}_N$. Then there exists $n\geqslant 1$ such that $\im\zeta_n\geqslant\widetilde{\MB}(\alpha_{n+1})$. By \eqref{equ-y-n-hat-2} and the choice of $M$ in \eqref{equ:M}, we have $\Phi_n^{-1}(\zeta_n)\in\Delta_n$ and hence $\Phi_0^{-1}(\zeta)\in\Delta_0$. Therefore, $\Gamma_\alpha\setminus\{\cv\}=\Phi_0^{-1}(\gamma_\alpha\setminus\{1\})$ is contained in $\Delta_0$ and $\cv\in\partial\Delta_0$.

Suppose $\alpha\in\MB_N$ and $\cv\in\partial\Delta_0$. By Lemma \ref{lema-in-disk}, we have $\Phi_0^{-1}(\zeta)\in\Delta_0\cap\Gamma_\alpha$. According to Lemma \ref{lema-eventually-above}, there exists an integer $n\geqslant 1$ so that $\im \zeta_n\geqslant\widetilde{\MB}(\alpha_{n+1})$. This implies that $\alpha\in\widetilde{\MH}_N$.
\end{proof}

\section{Optimality of Herman condition}

Herman condition is not easy to verify in general. Yoccoz gave this condition an arithmetic characterization so that one can check easily whether an irrational number is of Herman type. In this section, we first recall Yoccoz's characterization and then prove that under the high type condition, an irrational number is of Herman type if and only if it belongs to the set $\widetilde{\MH}_N$ defined in \S\ref{subsec-new-class}.

\subsection{Yoccoz's characterization on $\MH$}\label{subsec-Yoc}

For $\alpha\in(0,1)$ and $x\in\R$, define
\begin{equation}
r_\alpha(x):=
\left\{
\begin{aligned}
& \frac{1}{\alpha}\Big(x-\log\frac{1}{\alpha}+1\Big)  & ~~~\text{if}\quad x\geqslant \log\frac{1}{\alpha},\\
& e^x & ~~~\text{if}\quad x< \log\frac{1}{\alpha}.
\end{aligned}
\right.
\end{equation}
The map $r_\alpha$ is of class $C^1$ on $\R$, satisfying $r_\alpha(\log\frac{1}{\alpha})=r_\alpha'(\log\frac{1}{\alpha})=\frac{1}{\alpha}$, $x+1\leqslant r_\alpha(x)\leqslant e^x$ for all $x\in\R$, and $r_\alpha'(x)\geqslant 1$ for all $x\geqslant 0$.

\medskip
For an irrational number $\alpha\in(0,1)$, we use $(\alpha_n)_{n\geqslant 0}$ to denote the sequence of irrationals defined as in \eqref{equ-gauss}.
Let $\MB(\alpha)$ be the Brjuno sum of $\alpha$ (see \eqref{equ-Brjuno-sum-Yoccoz}). A Brjuno number $\alpha$ is a Herman number (or belongs to Herman type) if every orientation-preserving analytic circle diffeomorphism of rotation number $\alpha$ is analytically conjugate to a rigid rotation.
Let $\MH$ be the set of all Herman numbers.

\begin{thm}[{\cite[\S 2.5]{Yoc02}}]\label{thm-Yoccoz}
Herman condition has the following arithmetic characterization:
\begin{equation}
\MH=\big\{\alpha\in\MB: \forall\, m\geqslant 0, \,\exists\, n>m \text{ such that } r_{\alpha_{n-1}}\circ\cdots\circ r_{\alpha_m}(0)\geqslant \MB(\alpha_n)\big\}.
\end{equation}
\end{thm}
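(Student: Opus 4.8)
The plan is to reconstruct Yoccoz's renormalization argument for analytic circle diffeomorphisms from \cite{Yoc02}. The basic tool is the renormalization operator on germs of analytic circle diffeomorphisms (or on the associated commuting pairs), which sends a diffeomorphism $f$ of rotation number $\alpha$ to a diffeomorphism of rotation number $\alpha_1$, the Gauss image of $\alpha$, and whose iterates act on the sequence $(\alpha_n)_{n\geq 0}$. To each such $f$ one attaches its \emph{width} $w(f)>0$: the size of the largest neighbourhood of $\R/\Z$ to which $f$ admits a univalent holomorphic extension, normalized so that the Siegel--Brjuno linearization argument needs width $\MB(\alpha)$ and so that one step of renormalization transfers the width according to the model map $r_\alpha$ in the statement. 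The heart of the matter is Yoccoz's a priori estimate that, up to a universal additive constant, the renormalized width behaves like $r_\alpha(w(f))$: when $w(f)<\log(1/\alpha)$ the renormalized diffeomorphism is controlled by its combinatorial length and the width grows exponentially, like $e^{w(f)}$; once $w(f)\geq\log(1/\alpha)$ it grows only affinely with slope $1/\alpha$. This is exactly why $r_\alpha$, rather than either of its two branches alone, is the correct transfer function, and it is the reason the two-branch function $\varphi_{\alpha_n}$ of \eqref{equ-varphi} governs the analogous height transport $s_{\alpha_n}$ in the present paper.

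Two further ingredients link the width to analytic conjugacy. First, a quantitative Brjuno theorem: there is a universal constant $C$ such that $w(f)\geq\MB(\alpha)+C$ forces $f$ to be analytically conjugate to the rotation $R_\alpha$, the Brjuno sum being precisely the room the Siegel--Brjuno argument consumes. Second, a transport principle: $f$ is analytically linearizable if and only if some (equivalently every) renormalization of $f$ is, since a linearizing coordinate at a deeper level can be pushed back down the renormalization tower with only bounded loss of domain. Combining these with the a priori estimate, a given $f$ of rotation number $\alpha$ is linearizable as soon as, for some $n$, the iterated model value $r_{\alpha_{n-1}}\circ\cdots\circ r_{\alpha_0}(w(f))$ dominates $\MB(\alpha_n)$; and, working from an arbitrary level $m$, the worst case over all $f$ is the one starting from width $0$, so that the condition $r_{\alpha_{n-1}}\circ\cdots\circ r_{\alpha_m}(0)\geq\MB(\alpha_n)$ for some $n>m$ (for every $m$) is exactly what should characterize the property that \emph{all} analytic diffeomorphisms of rotation number $\alpha$ are linearizable.

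The proof then splits into the two implications. For sufficiency, assume $\alpha$ lies in the right-hand set, and let $f$ be any analytic circle diffeomorphism of rotation number $\alpha$. Its renormalizations are defined from some level $m$ onward, with positive width at level $m$; the arithmetic hypothesis at this $m$ gives $n>m$ with $r_{\alpha_{n-1}}\circ\cdots\circ r_{\alpha_m}(0)\geq\MB(\alpha_n)$, and since each $r_{\alpha_j}$ is increasing and $r_{\alpha_j}(x)\geq x+1\geq x$, monotonicity upgrades this to the same inequality with the genuine width at level $m$ in place of $0$; the a priori estimate then yields width $\gtrsim\MB(\alpha_n)$ at level $n$, the quantitative Brjuno theorem linearizes the $n$-th renormalization, and the transport principle linearizes $f$. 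For necessity, assume $\alpha$ is Brjuno but fails the arithmetic condition, so there is an $m$ with $r_{\alpha_{n-1}}\circ\cdots\circ r_{\alpha_m}(0)<\MB(\alpha_n)$ for all $n>m$. Using that the a priori estimate is essentially sharp --- realized, for instance, by suitable members of the Arnold family --- one constructs an analytic circle diffeomorphism $g$ of rotation number $\alpha_m$ whose widths stay on the lower side of the model at every subsequent level, so that no renormalization of $g$ ever accumulates room $\MB(\alpha_n)$ and hence $g$ is not analytically linearizable; un-renormalizing $m$ times produces an analytic diffeomorphism $f$ of rotation number $\alpha$ which is not conjugate to $R_\alpha$, so $\alpha\notin\MH$.

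The hard part will be making the width-transfer estimate two-sided and sharp up to a universal constant. The sufficiency direction uses only the lower bound $w(\MMR f)\gtrsim r_\alpha(w(f))$ together with the quantitative Brjuno theorem, both comparatively soft. Necessity, however, requires an essentially optimal family of examples realizing $w(\MMR f)\lesssim r_\alpha(w(f))$, and, crucially, requires controlling how the \emph{analytic} obstruction to linearization survives each un-renormalization: this is the delicate part of Yoccoz's proof, carried out through the Arnold family and the geometry of Herman rings. The remaining bookkeeping --- choosing the additive constants so that the comparison ``$r_{\alpha_{n-1}}\circ\cdots\circ r_{\alpha_m}(0)$ versus $\MB(\alpha_n)$'' can be made level-independently and still matches the genuine widths --- is routine once the two-sided estimate is in hand.
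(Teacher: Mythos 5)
The paper does not supply a proof of this statement at all: it is imported verbatim from Yoccoz's lecture notes, which is why the theorem environment carries the citation \cite[\S 2.5]{Yoc02} as an optional argument. There is therefore no in-paper argument against which to check your proposal. What you have written is a high-level reconstruction of Yoccoz's own proof, and it is faithful in outline: the renormalization of analytic circle diffeomorphisms (or commuting pairs), the notion of width and the two-branch transfer estimate $w(\mathcal{R} f)\approx r_\alpha(w(f))$, the quantitative Brjuno theorem that width $\geq \MB(\alpha)+C$ forces linearization, the transport principle down the tower, and, for necessity, an essentially optimal family (Arnold-type examples) realizing the upper branch of the width estimate. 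You also correctly flag the two genuinely delicate points --- making the a priori estimate two-sided up to uniform constants, and showing that the obstruction to linearization survives un-renormalization --- as the places where the real work lives; those are asserted, not proved, so your text should be read as a précis of \cite{Yoc02}, not a self-contained argument. One point worth emphasizing a bit more than you do: the universal \emph{for all} $m$ in the characterization is not a free strengthening --- it is forced because an arbitrary analytic $f$ of rotation number $\alpha$ only has its width controlled from a level $m(f)$ that depends on $f$, and the iterated model value restarts from $0$ at each new $m$; your worst-case remark gestures at this but the dependence of $m$ on $f$ is what makes the quantifier necessary. The remark connecting $r_\alpha$ to the auxiliary functions $\varphi_\alpha$, $s^u_\alpha$, $s^l_\alpha$ in the present paper is a correct and useful observation about why the statement is being invoked here.
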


\subsection{Two conditions are equivalent}\label{subsec-equi-irrat}

In this subsection, we prove that the set of Herman numbers is equal to $\widetilde{\MH}_N$ defined in \S\ref{subsec-new-class} under the high type condition.

\begin{lem}[{\cite[Lemma 4.9]{Yoc02}}]\label{lema-Yoccoz}
Let $\alpha$ be irrational and $x\geqslant 0$. Then $\alpha\not\in\MH$ if and only if there exist $m$ and an infinite set $I=I(m,x,\alpha)\subset\N$ such that, for all $k\in I$, we have
\begin{equation}
r_{\alpha_{m+k-1}}\circ\cdots\circ r_{\alpha_m}(x)<\log\tfrac{1}{\alpha_{m+k}}.
\end{equation}
\end{lem}

Let $D_4$ and $D_5>1$ be the constants introduced in Lemma \ref{lema-key-esti-inverse}.

\begin{defi}
For $\alpha\in(0,1)$ and $y\in\R$, we define
\begin{equation}\label{equ-s-alpha-star}
\overline{s}_\alpha(y):=
\left\{
\begin{aligned}
& \frac{1}{\alpha}\Big(y-\frac{1}{2\pi}\log\frac{1}{\alpha}+D_5\Big)  & ~~~\text{if}\quad y\geqslant \frac{1}{2\pi}\log\frac{1}{\alpha}+D_4,\\
& e^{D_5}\, e^{2\pi y} & ~~~\text{if}\quad y< \frac{1}{2\pi}\log\frac{1}{\alpha}+D_4.
\end{aligned}
\right.
\end{equation}
\end{defi}

Let $\gamma_\alpha=\gamma_{\alpha_0}$ be the unbounded arc defined in \eqref{equ-gamma-n} and $s_{\alpha_n}:=\Phi_n\circ \Expo:\gamma_{n-1}\to\gamma_n$ the map defined in \eqref{equ-s-alpha-n}.
By Lemma \ref{lema-key-esti-inverse} and the definition of $\overline{s}_\alpha$, we have the following immediate result.

\begin{lem}\label{lema-less-than}
For each $\alpha\in\MB_N$ and $\zeta\in\gamma_\alpha$, we have
\begin{equation}
\im s_\alpha(\zeta)\leqslant \overline{s}_\alpha(\im \zeta).
\end{equation}
\end{lem}

Define $\MH_N:=\MH\cap\MB_N$.

\begin{lem}\label{lem-subset-alpha}
We have $\widetilde{\MH}_N\subset\MH_N$.
\end{lem}

\begin{proof}
Assume by contradiction that $\alpha\in\widetilde{\MH}_N\setminus\MH_N$. Define
\begin{equation}\label{equ:C-0}
C_0:=8\pi e^{D_5+2\pi D_4}.
\end{equation}
By Lemma \ref{lema-Yoccoz}, for the number $2C_0$, there exist $m\geqslant 1$ and an infinite subset $I=I(m,2C_0,\alpha)$ of $\N$ such that for all $k\in I$, we have
\begin{equation}\label{equ-not-Herman}
r_{\alpha_{m+k-1}}\circ\cdots\circ r_{\alpha_m}(2C_0)<\log\tfrac{1}{\alpha_{m+k}}.
\end{equation}
Denote $x_{m-1}:=2C_0$ and $y_{m-1}:=1$. For $k\geqslant 1$, we define
\begin{equation}
x_{m+k-1}:=r_{\alpha_{m+k-1}}\circ\cdots\circ r_{\alpha_m}(2C_0) \text{\quad and\quad} y_{m+k-1}:=\overline{s}_{\alpha_{m+k-1}}\circ\cdots\circ \overline{s}_{\alpha_m}(1),
\end{equation}
where $\overline{s}_{\alpha_n}$ is the map defined in \eqref{equ-s-alpha-star}. We claim that
\begin{equation}\label{equ-key-inequality}
x_{m+k-1}\geqslant 2\pi y_{m+k-1}+C_0 \text{\quad for all } k\geqslant 0.
\end{equation}

Assume that \eqref{equ-key-inequality} holds temporarily. Since $\gamma_\alpha$ is an arc starting at the point $1$ and finally going up to the infinity, there exists $\zeta\in\gamma_{\alpha_{m-1}}$ so that $\im \zeta=1$. For $k\geqslant 1$, we denote
\begin{equation}
\zeta_{m+k-1}:=s_{\alpha_{m+k-1}}\circ\cdots\circ s_{\alpha_m}(\zeta),
\end{equation}
where each $s_{\alpha_n}$ is defined in \eqref{equ-s-alpha-n}. By Lemma \ref{lema-less-than}, we have $y_{m+k-1}\geqslant \im \zeta_{m+k-1}$ for all $k\geqslant 1$.

Since $\alpha\in\widetilde{\MH}_N$, by the definition of $\widetilde{\MH}_N$ and Lemma \ref{lema-all-greater}, there exists an integer $k_0\geqslant 1$ such that for all $k\geqslant k_0$, one has
\begin{equation}
y_{m+k-1}\geqslant \im \zeta_{m+k-1}\geqslant \widetilde{\MB}(\alpha_{m+k})=\frac{\MB(\alpha_{m+k})}{2\pi}+M>\frac{1}{2\pi}\log\frac{1}{\alpha_{m+k}}+M.
\end{equation}
On the other hand, since $\alpha\not\in\MH_N$, by \eqref{equ-not-Herman} there exists $k\in I$ with $k\geqslant k_0$ such that $x_{m+k-1}<\log\tfrac{1}{\alpha_{m+k}}$. This is a contradiction since by \eqref{equ-key-inequality} we have $x_{m+k-1}\geqslant 2\pi y_{m+k-1}+C_0>\log\tfrac{1}{\alpha_{m+k}}$. Hence it suffices to prove the claim \eqref{equ-key-inequality}.

\medskip
Obviously, \eqref{equ-key-inequality} is true when $k=0$ since $C_0\geqslant 2\pi$. Suppose $x_{m+k-1}\geqslant 2\pi y_{m+k-1}+C_0$ for some $k\geqslant 0$. It suffices to obtain $x_{m+k}\geqslant 2\pi y_{m+k}+C_0$. The arguments are divided into following three cases.

\medskip
\textbf{Case I}: Suppose $x_{m+k-1}<\log\frac{1}{\alpha_{m+k}}$ and $y_{m+k-1}<\frac{1}{2\pi}\log\frac{1}{\alpha_{m+k}}+D_4$. By \eqref{equ:C-0}, we have $C_0>2 (D_5+\log(2\pi))$ and hence $e^{y+C_0}>e^{y+D_5+\log(2\pi)}+C_0$ for any $y\geqslant 1$. Therefore,
\begin{equation}
\begin{split}
x_{m+k}
=&~e^{x_{m+k-1}}\geqslant e^{2\pi y_{m+k-1}+C_0}> e^{2\pi y_{m+k-1}+D_5+\log(2\pi)}+C_0\\
=&~2\pi\,\overline{s}_{\alpha_{m+k}}(y_{m+k-1})+C_0=2\pi y_{m+k}+C_0.
\end{split}
\end{equation}

\textbf{Case II}: Suppose $x_{m+k-1}\geqslant\log\frac{1}{\alpha_{m+k}}$ and $y_{m+k-1}\geqslant\frac{1}{2\pi}\log\frac{1}{\alpha_{m+k}}+D_4$.
Then
\begin{equation}
\begin{split}
x_{m+k}&~=\frac{1}{\alpha_{m+k}}\Big(x_{m+k-1}-\log\frac{1}{\alpha_{m+k}}+1\Big)\\
&~\geqslant \frac{2\pi}{\alpha_{m+k}}\Big(y_{m+k-1}-\frac{1}{2\pi}\log\frac{1}{\alpha_{m+k}}+D_5\Big) +\frac{1}{\alpha_{m+k}}(C_0+1-2\pi D_5)\\
&~\geqslant 2\pi y_{m+k}+2(C_0+1-2\pi D_5)> 2\pi y_{m+k}+C_0.
\end{split}
\end{equation}

\textbf{Case III}: Suppose $x_{m+k-1}\geqslant\log\frac{1}{\alpha_{m+k}}$ and $y_{m+k-1}<\frac{1}{2\pi}\log\frac{1}{\alpha_{m+k}}+D_4$. We consider the following two subcases:

\medskip
Subcase (i): Suppose $2\pi y_{m+k-1}<\log\frac{1}{\alpha_{m+k}}-\frac{C_0}{4}$. Note that
\begin{equation}
x_{m+k}=\frac{1}{\alpha_{m+k}}\left(x_{m+k-1}-\log\frac{1}{\alpha_{m+k}}+1\right)\geqslant \frac{1}{\alpha_{m+k}}.
\end{equation}
Since $x_{m-1}=2C_0$, we have $x_{m+k}\geqslant \max\big\{2C_0,\frac{1}{\alpha_{m+k}}\big\}$. By \eqref{equ:C-0}, we have $C_0>4D_5+4\log(4\pi)$ and hence $2\pi e^{D_5-C_0/4}<1/2$. Then
\begin{equation}
\begin{split}
x_{m+k}\geqslant \max\left\{2C_0,\frac{1}{\alpha_{m+k}}\right\}
\geqslant &~ \frac{2\pi e^{D_5-C_0/4}}{\alpha_{m+k}}+C_0 \\
\geqslant &~ 2\pi e^{D_5} e^{2\pi y_{m+k-1}}+C_0=2\pi y_{m+k}+C_0.
\end{split}
\end{equation}

\medskip
Subcase (ii): Suppose $\log\frac{1}{\alpha_{m+k}}-\frac{C_0}{4}\leqslant 2\pi y_{m+k-1}<\log\frac{1}{\alpha_{m+k}}+2\pi D_4$. Then
\begin{equation}\label{equ-compare-1}
\begin{split}
&~\alpha_{m+k}\big(x_{m+k}-(2\pi y_{m+k}+C_0)\big) \\
=&~ x_{m+k-1}-\log\tfrac{1}{\alpha_{m+k}}+1-\alpha_{m+k}(2\pi e^{D_5} e^{2\pi y_{m+k-1}}+C_0) \\
\geqslant &~ 2\pi y_{m+k-1}+C_0+1-\log\tfrac{1}{\alpha_{m+k}}-2\pi \alpha_{m+k} e^{D_5} e^{2\pi y_{m+k-1}}-C_0\alpha_{m+k}.
\end{split}
\end{equation}
For $\alpha\in(0,1/2]$, we consider the following continuous function:
\begin{equation}
h(t):=t+C_0+1-\log\tfrac{1}{\alpha}-2\pi \alpha e^{D_5} e^t-C_0\alpha, \quad \text{where } t\in\R.
\end{equation}
Then $h'(t)=1-2\pi \alpha e^{D_5} e^t$. Hence $h$ is increasing on $(-\infty,\log\frac{1}{\alpha}-D_5-\log(2\pi)]$ and decreasing on $[\log\frac{1}{\alpha}-D_5-\log(2\pi),+\infty)$. By \eqref{equ:C-0} and a direct calculation, we have
\begin{equation}\label{equ-value-ends}
\begin{split}
h(\log\tfrac{1}{\alpha}-\tfrac{C_0}{4})=&~(\tfrac{3}{4}-\alpha)C_0+1-2\pi e^{D_5-C_0/4}>0,\text{ and } \\
h(\log\tfrac{1}{\alpha}+2\pi D_4)=&~(1-\alpha)C_0+2\pi D_4+1-2\pi e^{D_5+2\pi D_4}>0.
\end{split}
\end{equation}
By \eqref{equ-compare-1} and \eqref{equ-value-ends}, we have $x_{m+k}>2\pi y_{m+k}+C_0$.
This finishes the proof of the claim \eqref{equ-key-inequality} and the lemma holds.
\end{proof}

Let $D_3>0$ be the constant introduced in Lemma \ref{lema-key-estimate-lp}.

\begin{defi}
For $\alpha\in(0,1)$ and $y\in\R$, we define
\begin{equation}\label{equ-s-alpha-star-1}
\underline{s}_{\,\alpha}(y):=
\left\{
\begin{aligned}
& \frac{1}{\alpha}\Big(y-\frac{1}{2\pi}\log\frac{1}{\alpha}-D_3\Big)  & ~~~\text{if}\quad y\geqslant \frac{1}{2\pi}\log\frac{1}{\alpha}+D_3+1,\\
& e^{-D_5}\, e^{2\pi y}-3 & ~~~\text{if}\quad y< \frac{1}{2\pi}\log\frac{1}{\alpha}+D_3+1.
\end{aligned}
\right.
\end{equation}
\end{defi}

\begin{lem}\label{lema-less-than-1}
For each $\alpha\in\MB_N$ and $\zeta\in\gamma_\alpha$, we have
\begin{equation}
\underline{s}_{\,\alpha}(\im \zeta)\leqslant \im s_\alpha(\zeta).
\end{equation}
\end{lem}

\begin{proof}
It follows from the proof of Lemma \ref{lema-key-esti-inverse} that $D_4=D_3+1$. Moreover, we choose $D_5=D_3$ in the proof of Lemma \ref{lema-key-esti-inverse}(a). Then this lemma follows immediately from Lemma \ref{lema-key-esti-inverse} and the definition of $\underline{s}_{\,\alpha}$.
\end{proof}

\begin{lem}\label{lem-subset-alpha-yy}
We have $\MH_N\subset\widetilde{\MH}_N$.
\end{lem}

\begin{proof}
The proof is similar to that of Lemma \ref{lem-subset-alpha}. Suppose $\alpha\in\MH_N\setminus\widetilde{\MH}_N$ by contradiction. Since $\alpha\not\in\widetilde{\MH}_N$, by the definition of $\widetilde{\MH}_N$, there exist a point $\zeta\in\gamma_\alpha\setminus\{1\}$ and an infinite sequence $(n_k)_{k\in\N}$ such that
\begin{equation}\label{equ-im-s-alpha-n}
\im \zeta_{n_k}<\widetilde{\MB}(\alpha_{n_k+1}),
\end{equation}
where
\begin{equation}
\zeta_n:=s_{\alpha_n}\circ\cdots\circ s_{\alpha_1}(\zeta) \text{\quad for all } n\in\N.
\end{equation}
By the uniform contraction with respect to the hyperbolic metric as in the proof of Proposition \ref{prop-tend-to-bdy} and Lemma \ref{lem-Jordan-arc}, there exists an integer $m\geqslant 1$ such that
\begin{equation}
\zeta_{m-1}\in\gamma_{m-1} \text{\quad and\quad}
\im \zeta_{m-1}\geqslant 2C_0,
\end{equation}
where $C_0>2M$ is a large number and $M\geqslant 1$ is introduced in the definition of $\widetilde{\MB}(\alpha_n)$.
Then by \eqref{equ-im-s-alpha-n} there exists an infinite subset $I'=I'(m,\zeta,\alpha)$ of $\N$ such that for all $k\in I'$, we have
\begin{equation}\label{equ-not-in-H}
\im \zeta_{m+k-1}<\widetilde{\MB}(\alpha_{m+k}).
\end{equation}

Since $\alpha\in\MH_N$, by Theorem \ref{thm-Yoccoz}, there exists $k_0=k_0(m)\geqslant 1$ such that $r_{\alpha_{m+k_0-1}}\circ\cdots\circ r_{\alpha_m}(0)\geqslant \MB(\alpha_{m+k_0})$. A direct calculation shows that for all $k\geqslant k_0$, one has
\begin{equation}\label{equ-not-Herman-yy}
r_{\alpha_{m+k-1}}\circ\cdots\circ r_{\alpha_m}(0)\geqslant \MB(\alpha_{m+k}).
\end{equation}
Denote $x_{m-1}:=0$ and $y_{m-1}:=2C_0$. For $k\geqslant 1$, we define
\begin{equation}
x_{m+k-1}:=r_{\alpha_{m+k-1}}\circ\cdots\circ r_{\alpha_m}(0) \text{\quad and\quad} y_{m+k-1}:=\underline{s}_{\,\alpha_{m+k-1}}\circ\cdots\circ \underline{s}_{\,\alpha_m}(2C_0),
\end{equation}
where $\underline{s}_{\,\alpha_n}$ is the map defined in \eqref{equ-s-alpha-star-1}. We claim that if $C_0$ is large enough, then
\begin{equation}\label{equ-key-inequality-yy}
2\pi y_{m+k-1}\geqslant x_{m+k-1}+C_0 \text{\quad for all } k\geqslant 0.
\end{equation}

Assume that \eqref{equ-key-inequality-yy} holds temporarily. By Lemma \ref{lema-less-than-1}, we have $y_{m+k-1}\leqslant \im \zeta_{m+k-1}$ for all $k\geqslant 1$. By \eqref{equ-not-in-H}, there exists an integer $k\in I'$ with $k\geqslant k_0$ such that
\begin{equation}
y_{m+k-1}\leqslant \im \zeta_{m+k-1}< \widetilde{\MB}(\alpha_{m+k})=\frac{\MB(\alpha_{m+k})}{2\pi}+M.
\end{equation}
On the other hand, by \eqref{equ-not-Herman-yy}, we have $x_{m+k-1}\geqslant\MB(\alpha_{m+k})$.
However, by \eqref{equ-key-inequality-yy} we have $x_{m+k-1}\le 2\pi y_{m+k-1}-C_0<\MB(\alpha_{m+k})$, which is a contradiction.
Hence it suffices to prove the claim \eqref{equ-key-inequality-yy}.

Obviously, \eqref{equ-key-inequality-yy} is true when $k=0$. Suppose $2\pi y_{m+k-1}\geqslant x_{m+k-1}+C_0$ for some $k\geqslant 0$.
Then one can divide the arguments into three cases as in Lemma \ref{lem-subset-alpha} to obtain $2\pi y_{m+k}\geqslant x_{m+k}+C_0$.
We omit the details since the rest proof is completely the same.
\end{proof}

\begin{rmk}
In fact, if $\alpha\in\MH_N$, then according to \cite{Ghy84} and \cite{Her85}, the boundary of the Siegel disk of each $f\in\IS_\alpha\cup\{Q_\alpha\}$ contains the unique critical value $-\frac{4}{27}$. This implies that $\alpha\in\widetilde{\MH}_N$ by Proposition \ref{prop-equi-alpha}. Therefore in this way we also obtain $\MH_N\subset\widetilde{\MH}_N$.
\end{rmk}

\begin{proof}[{Proof of the second part of the Main Theorem}]
Let $\alpha\in\HT_N$ be an irrational number of sufficiently high type. By Lemmas \ref{lem-subset-alpha} and \ref{lem-subset-alpha-yy}, $\alpha\in\MH_N$ if and only if $\alpha\in\widetilde{\MH}_N$. By Proposition \ref{prop-equi-alpha}, $\alpha\in\widetilde{\MH}_N$ if and only if $\cv=f(\cp_f)\in\partial\Delta_f$, where $\Delta_f$ is the Siegel disk of $f\in\IS_\alpha\cup \{Q_\alpha\}$ and $\cp_f$ is the unique critical point of $f$. Therefore, $\alpha\in\MH_N$ if and only if $\cp_f\in\partial\Delta_f$.
\end{proof}

\appendix
\section{Some calculations in Fatou coordinate planes}\label{sec-arc-straight}

In this appendix we give the proof of Lemma \ref{lema-height} based on some estimates in \cite{IS08}. Let $0<\alpha<1/2$. Define
\begin{equation}
Y:=\Big\{w=x+ y\ii\in\C:-\frac{1}{2\pi\alpha}\Big(\arccos\frac{\sqrt{3}}{2e^{2\pi\alpha y}}-\frac{\pi}{6}\Big)<x<\frac{2}{3\alpha} \text{ and } y>1\Big\}
\end{equation}
and $R:=\tfrac{4}{27}e^{3\pi}$ (see Figure \ref{Fig_Y-w}).

\begin{figure}[!htpb]
  \setlength{\unitlength}{1mm}
  \centering
  \includegraphics[width=120mm]{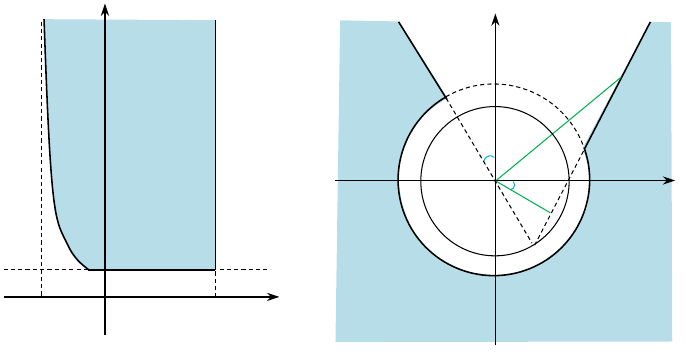}
  \put(-98,40){$Y$}
  \put(-84,6){$\tfrac{2}{3\alpha}$}
  \put(-116,6){$-\tfrac{1}{6\alpha}$}
  \put(-101,6){$0$}
  \put(-101,12){$1$}
  \put(-36,27){$0$}
  \put(-20,27){$1$}
  \put(-16,27){$e^{2\pi\alpha}$}
  \put(-10,48){$\xi$}
  \put(-16,6){$e^{-2\pi\ii\alpha w}$}
  \put(-87,18){$w$}
  \put(-29,28.3){\tiny{$\pi/6$}}
  \put(-36,36.7){\small{$\frac{\pi}{6}$}}
  \caption{The domain $Y$ and its image under $w\mapsto e^{-2\pi\ii\alpha w}$.}
  \label{Fig_Y-w}
\end{figure}

\begin{lem}\label{lema-calcu}
There exists $\varepsilon'>0$ such that for all $f\in\IS_\alpha$ with $\alpha\in(0,\varepsilon']$,
\begin{equation}
\tau_f(Y)\subset\D(0,R)\setminus [0,R),
\end{equation}
where $\tau_f:\C\to\EC\setminus\{0,\sigma_f\}$ is the universal covering defined in \eqref{equ-tau}.
\end{lem}

\begin{proof}
By a direct calculation, we have
\begin{equation}
\begin{split}
\{e^{-2\pi\ii\alpha w}: w\in Y\}
= &~\Big\{\xi\in\C: |\xi|>e^{2\pi\alpha} \text{ and } -\frac{4\pi}{3}<\arg\xi<\arccos\frac{\sqrt{3}}{2|\xi|}-\frac{\pi}{6}\Big\}\\
= &~\C\setminus\Big(\overline{\D}(0,e^{2\pi\alpha})\cup\Big\{\xi\in\C:\frac{\pi}{3}\leqslant\arg\Big(\xi-\frac{1-\sqrt{3}\ii}{2}\Big)\leqslant\frac{2\pi}{3}\Big\}\Big).
\end{split}
\end{equation}
Since $4\pi\alpha/(3R)<e^{2\pi\alpha}-1$, we have (see Figure \ref{Fig_Y-w})
\begin{equation}
e^{-2\pi\ii\alpha w}\in
\C\setminus\Big(\overline{\D}\big(1,\frac{4\pi\alpha}{3R}\big)
\cup\Big\{\xi\in\C:\frac{\pi}{3}\leqslant\arg\big(\xi-1\big)\leqslant\frac{2\pi}{3}\Big\}\Big).
\end{equation}
This implies that
\begin{equation}\label{equ-calcu-1}
\frac{1}{1-e^{-2\pi\ii\alpha w}}\in\D\Big(0,\frac{3R}{4\pi\alpha}\Big)\setminus\Big\{\xi\in\C:\frac{\pi}{3}\leqslant\arg\xi\leqslant\frac{2\pi}{3}\Big\}.
\end{equation}

Note that $\arcsin x\leqslant \tfrac{\pi}{3}x$ for $0\leqslant x\leqslant 1/2$. By \cite[Main Theorem 1(a)]{IS08}, $|f_0''(0)-4.91|\leqslant 1.14$ for all $f_0\in\IS_0$. Hence $|\arg f_0''(0)|<\arcsin\tfrac{1}{3}\leqslant\tfrac{\pi}{9}$ and
\begin{equation}
-\frac{4\pi\ii\alpha}{f_0''(0)}\in
\Big\{z\in\C:\frac{4\pi\alpha}{7}< |z|< \frac{8\pi\alpha}{7}\text{ and }\frac{25\pi}{18}<\arg z<\frac{29\pi}{18}\Big\}.
\end{equation}
By \eqref{equ-sigma-f} and the pre-compactness of $\IS_\alpha$, there exists a small $\varepsilon'>0$ such that for all $f\in\IS_\alpha$ with $\alpha\in(0,\varepsilon']$, then
\begin{equation}\label{equ-tau-est-2}
\sigma_f\in\Big\{z\in\C:\frac{\pi\alpha}{2}< |z|< \frac{4\pi\alpha}{3}\text{ and }\frac{4\pi}{3}<\arg z<\frac{5\pi}{3}\Big\}.
\end{equation}
By \eqref{equ-calcu-1} and \eqref{equ-tau-est-2} we have
\begin{equation}
\tau_f(w)=\frac{\sigma_f}{1-e^{-2\pi\ii\alpha w}}\in\D(0,R)\setminus [0,R).
\end{equation}
The proof is complete.
\end{proof}

For each $C\geqslant 1$, we define a subset of $\mho$ (see \eqref{equ-E}):
\begin{equation}\label{equ-mho-1}
\mho_1(C):=\{\zeta\in\C:1/4<\re\zeta< 7/4 \text{ and } \im\zeta\geqslant C\}.
\end{equation}

\begin{lem}\label{lema-calcu-1}
There exist $C> 1$ and $\varepsilon''>0$ such that for all $f\in\IS_\alpha$ with $\alpha\in(0,\varepsilon'']$, we have
\begin{equation}
L_f^{-1}(\overline{\mho_1(C)})\subset Y.
\end{equation}
where $L_f:\widetilde{\MP}_f\to\C$ is the univalent map defined in \eqref{equ-L-f}.
\end{lem}

\begin{proof}
Let $D_2>0$ be introduced in Proposition \ref{prop-Cheraghi-L-f}. For $y>0$, we define
\begin{equation}
\varphi_1(y):=\log(2+\sqrt{y^2+(7/4)^2}).
\end{equation}
There exists a constant $C>0$ depending only on $D_2$ such that if $y\geqslant C$, then
\begin{equation}\label{equ:y-2D-2}
y-2D_2 \varphi_1(y)>1
\end{equation}
and
\begin{equation}\label{equ-y-arc}
\frac{y}{2\pi}\Big(\arccos\frac{\sqrt{3}}{2e^{2\pi}}-\frac{\pi}{6}\Big)-D_2\varphi_1(y)>0.
\end{equation}
Let $0<\alpha\leqslant 1/C$. By Proposition \ref{prop-Cheraghi-L-f}, we have $L_f^{-1}(\overline{\mho_1(C)})\subset X_1\cup X_2\cup X_3$, where
\begin{equation}
\begin{split}
X_1&~=\{x+y\ii:-D_2\log(1+\tfrac{1}{\alpha})\leqslant x\leqslant D_2\log(1+\tfrac{1}{\alpha})+\tfrac{7}{4} \text{ and }y\geqslant \tfrac{1}{\alpha}\},\\
X_2&~=\{x+y\ii:-D_2 \varphi_1(y)\leqslant x\leqslant D_2 \varphi_1(y)+\tfrac{7}{4} \text{ and }y\in[C,\tfrac{1}{\alpha}]\}
\end{split}
\end{equation}
and
\begin{equation}
X_3=\{x+y\ii:-D_2 \varphi_1(C)\leqslant x\leqslant D_2 \varphi_1(C)+\tfrac{7}{4} \text{ and }y\in[C-D_2\varphi_1(C),C]\}.
\end{equation}

For $y>0$, we define a continuous function
\begin{equation}
\phi(y):=\frac{1}{2\pi\alpha}\Big(\arccos\frac{\sqrt{3}}{2e^{2\pi\alpha y}}-\frac{\pi}{6}\Big).
\end{equation}
Note that $\alpha\log(1+1/\alpha)$ is uniformly bounded above for $0<\alpha<1$. There exists a constant $\kappa_1>0$ depending only on $D_2$ such that if $\alpha\in(0, \kappa_1]$, then for $y\geqslant 1/\alpha$,
\begin{equation}
\phi(y)-D_2\log\Big(1+\frac{1}{\alpha}\Big)\geqslant \frac{1}{2\pi\alpha}\Big(\arccos\frac{\sqrt{3}}{2e^{2\pi}}-\frac{\pi}{6}\Big)-D_2\log\Big(1+\frac{1}{\alpha}\Big)>0.
\end{equation}
For $y\in[C,1/\alpha]$, we denote $t=2\pi\alpha y\in[2\pi\alpha C,2\pi]$. Then
\begin{equation}
\phi(y)-D_2\varphi_1(y)=y\psi(t)-D_2\varphi_1(y),
\end{equation}
where
\begin{equation}\label{equ-psi-t}
\psi(t):=\frac{1}{t}\Big(\arccos\frac{\sqrt{3}}{2e^t}-\frac{\pi}{6}\Big).
\end{equation}
A direct calculation\footnote{Note that $\psi(t)=\frac{1}{t}\int_{0}^{t}\big(\frac{4}{3}e^{2s}-1\big)^{-1/2}\textup{d}s$ can be seen as the average of the integral of $\widetilde{\psi}(s)=\big(\frac{4}{3}e^{2s}-1\big)^{-1/2}$ in the interval $(0,t)$. Since $s\mapsto\widetilde{\psi}(s)$ is strictly decreasing in $(0,+\infty)$, we conclude that $t\mapsto \psi(t)$ is also.} shows that $\psi(t)$ is decreasing on $(0,2\pi]$. By \eqref{equ-y-arc} we have
\begin{equation}
\phi(y)-D_2\varphi_1(y)\geqslant \frac{y}{2\pi}\Big(\arccos\frac{\sqrt{3}}{2e^{2\pi}}-\frac{\pi}{6}\Big)-D_2\varphi_1(y)>0.
\end{equation}
Finally, let $y\in[C-D_2\varphi_1(C),C]$ and we still denote $t=2\pi\alpha y$. A direct calculation shows that $\lim_{t\to 0^+}\psi(t)=\sqrt{3}$, where $\psi$ is defined in \eqref{equ-psi-t}.
By \eqref{equ:y-2D-2}, there exists a constant $\kappa_2>0$ depending only on $D_2$ such that if $\alpha\in(0, \kappa_2]$, then for $y\in[C-D_2\varphi_1(C),C]$ we have
\begin{equation}
\phi(y)-D_2\varphi_1(C)\geqslant y-D_2\varphi_1(C)\geqslant (C-D_2\varphi_1(C))-D_2\varphi_1(C)>1.
\end{equation}

Let $\kappa_3>0$ be a constant depending only on $D_2$ such that $D_2\varphi_1(\tfrac{1}{\alpha})+\tfrac{7}{4}<\tfrac{2}{3\alpha}$ for all $\alpha\in(0,\kappa_3]$. The proof is finished if we set $\varepsilon'':=\min\{1/C,\kappa_1,\kappa_2,\kappa_3\}$.
\end{proof}

\begin{proof}[{Proof of Lemma \ref{lema-height}}]
For $f_0\in\IS_0$, one can define $\MC_{f_0}$ and $\MC_{f_0}^\sharp$ as in \eqref{defi-C-f-alpha} similarly (Replacing $\MP_f$ and $\Phi_f$ there by $\MP_{attr,f_0}$ and $\Phi_{attr,f_0}$). We first show that \eqref{equ-M-c-subset} holds for $f_0\in\IS_0$ and then use an argument of continuity.

The Main Theorem 1 in \cite{IS08} was proved by transferring the parabolic fixed point $0$ of $f_0\in\IS_0$ to $\infty$ and a class corresponding to $\IS_0$ was defined (see \cite[\S 5.A]{IS08}):
\begin{equation}
\IS_0^Q:=
\left\{F=Q\circ\varphi^{-1}
\left|
\begin{array}{l}
\varphi:\EC\setminus E\to \EC\setminus\{0\} \text{ is univalent}, \\
\varphi(\infty)=\infty \text{ and } \varphi'(\infty)=1
\end{array}
\right.
\right\},
\end{equation}
where $E$ is the ellipse defined in \eqref{ellipse} and $Q(z)=z(1+\frac{1}{z})^6/(1-\frac{1}{z})^4$ is a parabolic map. Each map in this class has a parabolic fixed point at $\infty$, a critical point at $\cp_F:=\varphi(5+2\sqrt{6})$ and a critical value at $\cv_Q=27$ which is independent of $F$.

By \cite[Lemma 5.14(a)]{IS08}, $P$ and $Q$ are related by $Q=\psi_0^{-1}\circ P\circ \psi_1$, where $\psi_1(z)=-4z/(1+z)^2$ is defined in \eqref{U-and-psi-1} and $\psi_0(z)=-4/z$. By \cite[Proposition 5.3(c)]{IS08}, there exists a one-to-one correspondence between $\IS_0$ and $\IS_0^Q$. For $F\in\IS_0^Q$, one has natural definitions of the attracting petal $\MP_{attr,F}$, repelling petal $\MP_{rep,F}$, attracting Fatou coordinate $\Phi_{attr,F}$ and repelling Fatou coordinate $\Phi_{rep,F}$ etc based on the definitions relating to $f_0\in\IS_0$ in \S\ref{subsec-IS-class}. For example, the attracting Fatou coordinate of $F$ is defined as $\Phi_{attr,F}(z)=\Phi_{attr,f_0}\circ\psi_0(z)$.

\medskip
For $f_0\in\IS_0$, we define a topological triangle
\begin{equation}
\MQ_{f_0}:=\{z\in\MP_{attr,f_0}:\Phi_{attr,f_0}(z)\in \mho\}.
\end{equation}
In order to prove \eqref{equ-M-c-subset}, it is convenient to work in the corresponding dynamical plane of $F=\psi_0^{-1}\circ f_0\circ\psi_0\in\IS_0^Q$. Define
\begin{equation}
D_{0,F}:=\{z\in\MP_{attr,F}:0<\re\Phi_{attr,F}(z)< 1 \text{~and~} \im\Phi_{attr,F}(z)>-2\}
\end{equation}
and $D_{1,F}:=F(D_{0,F})$. By \cite[Proposition 5.7(e)]{IS08}, for $z\in \overline{D}_{0,F}$ we have
\begin{equation}
|z|\geqslant 0.05>27\,e^{-3\pi} \text{\quad and\quad} z\not\in \R_-.
\end{equation}
By \cite[Proposition 5.6(b)]{IS08}, for $z\in \overline{D}_{1,F}$ we have
\begin{equation}
|z|\geqslant \tfrac{25}{\sqrt{3}}\sin\tfrac{\pi}{3}=\tfrac{25}{2}>27\,e^{-3\pi} \text{\quad and\quad} z\not\in \R_-.
\end{equation}
Let $R=\tfrac{4}{27}e^{3\pi}$. We have
\begin{equation}\label{equ-omega-new}
\overline{D}_{0,F}\cup \overline{D}_{1,F}\subset \psi_0^{-1}\big(\D(0,R)\setminus [0,R)\big)=\C\setminus\big(\overline{\D}(0,27\,e^{-3\pi})\cup\R^-\big).
\end{equation}
By the definition of $\MQ_{f_0}$, we have
\begin{equation}
\psi_0^{-1}(\MQ_{f_0})=\{z\in\MP_{attr,F}:1/4<\re\Phi_{attr,F}(z)< 7/4\ \text{~and~} \im\Phi_{attr,F}(z)> -2\}.
\end{equation}
Therefore, by \eqref{equ-omega-new} we have $\psi_0^{-1}(\overline{\MQ}_{f_0}\setminus\{0\})\subset \overline{D}_{0,F}\cup \overline{D}_{1,F}$.
This implies that
\begin{equation}\label{equ:MQ-f0}
\overline{\MQ}_{f_0}\setminus\{0\}\subset \D(0,R)\setminus [0,R) \text{\quad for all } f_0\in\IS_0.
\end{equation}

Let $C>1$ be the constant introduced in Lemma \ref{lema-calcu-1} and $\mho_1=\mho_1(C)$ be defined in \eqref{equ-mho-1}. By Lemmas \ref{lema-calcu} and \ref{lema-calcu-1}, for every $f\in\IS_\alpha$ with $0<\alpha\leqslant \min\{\varepsilon',\varepsilon''\}$, we have
\begin{equation}
\Phi_f^{-1}(\overline{\mho}_1)=\tau_f\circ L_f^{-1}(\overline{\mho}_1)\subset \D(0,R)\setminus [0,R).
\end{equation}
Define
\begin{equation}
\mho_2:=\overline{\mho\setminus\mho_1}=\{\zeta\in\C:1/4\leqslant\re\zeta\leqslant 7/4 \text{ and } -2\leqslant\im\zeta\leqslant C\}.
\end{equation}
By \eqref{equ:MQ-f0}, the continuity of the Fatou coordinates in Proposition \ref{prop-BC-prop-12}(d) (see also \cite[Proposition 3.2.2]{Shi00a}) and the pre-compactness of $\IS_0$, there exists a constant $0<\varepsilon_4'\leqslant \min\{\varepsilon',\varepsilon''\}$ such that for all $f\in\IS_\alpha$ with $\alpha\in(0,\varepsilon_4']$, we have $\Phi_f^{-1}(\mho_2)\subset \D(0,R)\setminus [0,R)$ and hence $\overline{\MQ}_f\setminus\{0\}=\Phi_f^{-1}(\overline{\mho})\subset\D(0,R)\setminus [0,R)$.
\end{proof}

\bibliographystyle{amsalpha}
\bibliography{E:/Latex-model/Ref1}

\providecommand{\bysame}{\leavevmode\hbox to3em{\hrulefill}\thinspace}
\providecommand{\MR}{\relax\ifhmode\unskip\space\fi MR }
\providecommand{\MRhref}[2]{%
  \href{http://www.ams.org/mathscinet-getitem?mr=#1}{#2}
}
\providecommand{\href}[2]{#2}
\begin{thebibliography}{Rog92b}

\bibitem[ABC04]{ABC04}
A.~Avila, X.~Buff, and A.~Ch{\'{e}}ritat, \emph{Siegel disks with smooth
  boundaries}, Acta Math. \textbf{193} (2004), no.~1, 1--30.

\bibitem[AC18]{AC18}
A.~Avila and D.~Cheraghi, \emph{Statistical properties of quadratic polynomials
  with a neutral fixed point}, J. Eur. Math. Soc. (JEMS) \textbf{20} (2018),
  no.~8, 2005--2062.

\bibitem[AL22]{AL22}
A.~Avila and M.~Lyubich, \emph{Lebesgue measure of {F}eigenbaum {J}ulia sets},
  Ann. of Math. (2) \textbf{195} (2022), no.~1, 1--88.

\bibitem[BC07]{BC07}
X.~Buff and A.~Ch{\'{e}}ritat, \emph{How regular can the boundary of a
  quadratic {S}iegel disk be?}, Proc. Amer. Math. Soc. \textbf{135} (2007),
  no.~4, 1073--1080.

\bibitem[BC12]{BC12}
\bysame, \emph{Quadratic {J}ulia sets with positive area}, Ann. of Math. (2)
  \textbf{176} (2012), no.~2, 673--746.

\bibitem[BCR09]{BCR09}
X.~Buff, A.~Ch{\'{e}}ritat, and L.~Rempe, \emph{Arithmetical hedgehogs},
  manuscript, 2009.

\bibitem[BF18]{BF18}
A.~M. Benini and N.~Fagella, \emph{Singular values and bounded {S}iegel disks},
  Math. Proc. Cambridge Philos. Soc. \textbf{165} (2018), no.~2, 249--265.

\bibitem[Brj71]{Brj71}
A.~D. Brjuno, \emph{Analytic form of differential equations. {I}, {II}}, Trudy
  Moskov. Mat. Ob\v{s}\v{c}. \textbf{25} (1971), 119--262; ibid. \textbf{26}
  (1972), 199--239.

\bibitem[CC15]{CC15}
D.~Cheraghi and A.~Ch\'{e}ritat, \emph{A proof of the {M}armi-{M}oussa-{Y}occoz
  conjecture for rotation numbers of high type}, Invent. Math. \textbf{202}
  (2015), no.~2, 677--742.

\bibitem[CDY20]{CDY20}
D.~Cheraghi, A.~DeZotti, and F.~Yang, \emph{Dimension paradox of irrationally
  indifferent attractors}, arXiv: 2003.12340, 2020.

\bibitem[CE18]{CE18}
A.~Ch{\'{e}}ritat and A.~L. Epstein, \emph{Bounded type {S}iegel disks of
  finite type maps with few singular values}, Sci. China Math. \textbf{61}
  (2018), no.~12, 2139--2156.

\bibitem[Ch{\'{e}}06]{Che06}
A.~Ch{\'{e}}ritat, \emph{Ghys-like models for {L}avaurs and simple entire
  maps}, Conform. Geom. Dyn. \textbf{10} (2006), 227--256.

\bibitem[Ch{\'{e}}11]{Che11}
\bysame, \emph{Relatively compact {S}iegel disks with non-locally connected
  boundaries}, Math. Ann. \textbf{349} (2011), no.~3, 529--542.

\bibitem[Che13]{Che13}
D.~Cheraghi, \emph{Typical orbits of quadratic polynomials with a neutral fixed
  point: {B}rjuno type}, Comm. Math. Phys. \textbf{322} (2013), no.~3,
  999--1035.

\bibitem[Che19]{Che19}
\bysame, \emph{Typical orbits of quadratic polynomials with a neutral fixed
  point: non-{B}rjuno type}, Ann. Sci. \'{E}c. Norm. Sup\'{e}r. (4) \textbf{52}
  (2019), no.~1, 59--138.

\bibitem[Che22a]{Che22b}
\bysame, \emph{Topology of irrationally indifferent attractors}, arXiv:
  1706.02678v3, 2022.

\bibitem[Ch{\'{e}}22b]{Che22}
A.~Ch{\'{e}}ritat, \emph{Near parabolic renormalization for unicritical
  holomorphic maps}, Arnold Math. J. \textbf{8} (2022), no.~2, 169--270.

\bibitem[CR16]{CR16}
A.~Ch{\'{e}}ritat and P.~Roesch, \emph{Herman's condition and {S}iegel disks of
  bi-critical polynomials}, Comm. Math. Phys. \textbf{344} (2016), no.~2,
  397--426.

\bibitem[CS15]{CS15}
D.~Cheraghi and M.~Shishikura, \emph{Satellite renormalization of quadratic
  polynomials}, arXiv: 1509.07843, 2015.

\bibitem[DL22]{DL22}
D.~Dudko and M.~Lyubich, \emph{Uniform a priori bounds for neutral
  renormalization}, arXiv: 2210.09280, 2022.

\bibitem[Dou83]{Dou83}
A.~Douady, \emph{Syst\`emes dynamiques holomorphes}, Bourbaki seminar, {V}ol.
  1982/83, Ast\'{e}risque, vol. 105, Soc. Math. France, Paris, 1983,
  pp.~39--63.

\bibitem[Dou87]{Dou87}
\bysame, \emph{Disques de {S}iegel et anneaux de {H}erman}, Bourbaki seminar,
  {V}ol. 1986/87, Ast\'{e}risque, no. 152-153, Soc. Math. France, Paris, 1987,
  pp.~151--172.

\bibitem[Gey01]{Gey01}
L.~Geyer, \emph{Siegel discs, {H}erman rings and the {A}rnold family}, Trans.
  Amer. Math. Soc. \textbf{353} (2001), no.~9, 3661--3683.

\bibitem[Ghy84]{Ghy84}
{\'{E}}.~Ghys, \emph{Transformations holomorphes au voisinage d'une courbe de
  {J}ordan}, C. R. Acad. Sci. Paris S\'{e}r. I Math. \textbf{298} (1984),
  no.~16, 385--388.

\bibitem[G{\'{S}}03]{GS03}
J.~Graczyk and G.~{\'{S}}wi{\c{a}}tek, \emph{Siegel disks with critical points
  in their boundaries}, Duke Math. J. \textbf{119} (2003), no.~1, 189--196.

\bibitem[Her79]{Her79}
M.~R. Herman, \emph{Sur la conjugaison diff\'{e}rentiable des
  diff\'{e}omorphismes du cercle \`a des rotations}, Inst. Hautes \'{E}tudes
  Sci. Publ. Math. (1979), no.~49, 5--233.

\bibitem[Her85]{Her85}
\bysame, \emph{Are there critical points on the boundaries of singular
  domains?}, Comm. Math. Phys. \textbf{99} (1985), no.~4, 593--612.

\bibitem[Her86]{Her86}
\bysame, \emph{Conjugaison quasi-symm\'{e}trique des diff\'{e}omorphismes du
  cercle \`{a} des rotations et applications aux disques singuliers de
  {S}iegel}, manuscript, 1986.

\bibitem[Her87]{Her87}
\bysame, \emph{Conjugaison quasi-symm\'{e}trique des hom\'{e}omorphismes
  analytiques du cercle \`{a} des rotations}, preliminary manuscript, 1987.

\bibitem[IS08]{IS08}
H.~Inou and M.~Shishikura, \emph{The renormalization for parabolic fixed points
  and their perturbation},
  \href{https://www.math.kyoto-u.ac.jp/~mitsu/pararenorm/}{https://www.math.kyoto-u.ac.jp/$\sim$mitsu/pararenorm/},
  preprint, 2008.

\bibitem[KZ09]{KZ09}
L.~Keen and G.~Zhang, \emph{Bounded-type {S}iegel disks of a one-dimensional
  family of entire functions}, Ergodic Theory Dynam. Systems \textbf{29}
  (2009), no.~1, 137--164.

\bibitem[McM98]{McM98b}
C.~T. McMullen, \emph{Self-similarity of {S}iegel disks and {H}ausdorff
  dimension of {J}ulia sets}, Acta Math. \textbf{180} (1998), no.~2, 247--292.

\bibitem[Mil06]{Mil06}
J.~Milnor, \emph{Dynamics in one complex variable}, third ed., Annals of
  Mathematics Studies, vol. 160, Princeton University Press, Princeton, NJ,
  2006.

\bibitem[P{\'{e}}r92]{Per92}
R.~P{\'{e}}rez{-Marco}, \emph{Solution compl\`ete au probl\`eme de {S}iegel de
  lin\'{e}arisation d'une application holomorphe au voisinage d'un point fixe
  (d'apr\`es {J}.-{C}. {Y}occoz)}, Ast\'{e}risque (1992), no.~206, 273--310.

\bibitem[P{\'{e}}r97]{Per97}
\bysame, \emph{Fixed points and circle maps}, Acta Math. \textbf{179} (1997),
  no.~2, 243--294.

\bibitem[Pom75]{Pom75}
C.~Pommerenke, \emph{Univalent functions}, Vandenhoeck \& Ruprecht,
  G\"{o}ttingen, 1975.

\bibitem[PZ04]{PZ04}
C.~L. Petersen and S.~Zakeri, \emph{On the {J}ulia set of a typical quadratic
  polynomial with a {S}iegel disk}, Ann. of Math. (2) \textbf{159} (2004),
  no.~1, 1--52.

\bibitem[Rem04]{Rem04}
L.~Rempe, \emph{On a question of {H}erman, {B}aker and {R}ippon concerning
  {S}iegel disks}, Bull. London Math. Soc. \textbf{36} (2004), no.~4, 516--518.

\bibitem[Rem08]{Rem08}
\bysame, \emph{Siegel disks and periodic rays of entire functions}, J. Reine
  Angew. Math. \textbf{624} (2008), 81--102.

\bibitem[Rip94]{Rip94}
P.~J. Rippon, \emph{On the boundaries of certain {S}iegel discs}, C. R. Acad.
  Sci. Paris S\'{e}r. I Math. \textbf{319} (1994), no.~8, 821--826.

\bibitem[Rog92a]{Rog92a}
J.~T. Rogers, \emph{Is the boundary of a {S}iegel disk a {J}ordan curve?},
  Bull. Amer. Math. Soc. (N.S.) \textbf{27} (1992), no.~2, 284--287.

\bibitem[Rog92b]{Rog92b}
\bysame, \emph{Singularities in the boundaries of local {S}iegel disks},
  Ergodic Theory Dynam. Systems \textbf{12} (1992), no.~4, 803--821.

\bibitem[Rog98]{Rog98}
\bysame, \emph{Diophantine conditions imply critical points on the boundaries
  of {S}iegel disks of polynomials}, Comm. Math. Phys. \textbf{195} (1998),
  no.~1, 175--193.

\bibitem[Shi98]{Shi98}
M.~Shishikura, \emph{The {H}ausdorff dimension of the boundary of the
  {M}andelbrot set and {J}ulia sets}, Ann. of Math. (2) \textbf{147} (1998),
  no.~2, 225--267.

\bibitem[Shi00]{Shi00a}
\bysame, \emph{Bifurcation of parabolic fixed points}, The {M}andelbrot set,
  theme and variations, London Math. Soc. Lecture Note Ser., vol. 274,
  Cambridge Univ. Press, Cambridge, 2000, pp.~325--363.

\bibitem[Shi01]{Shi01}
\bysame, \emph{Herman's theorem on quasisymmetric linearization of analytic
  circle homemorphisms}, manuscript, 2001.

\bibitem[Sie42]{Sie42}
C.~L. Siegel, \emph{Iteration of analytic functions}, Ann. of Math. (2)
  \textbf{43} (1942), 607--612.

\bibitem[Yam08]{Yam08}
M.~Yampolsky, \emph{Siegel disks and renormalization fixed points}, Holomorphic
  dynamics and renormalization, Fields Inst. Commun., vol.~53, Amer. Math.
  Soc., Providence, RI, 2008, pp.~377--393.

\bibitem[Yan13]{Yan13}
F.~Yang, \emph{On the dynamics of a family of entire functions}, Acta Math.
  Sin. (Engl. Ser.) \textbf{29} (2013), no.~11, 2047--2072.

\bibitem[Yan21]{Yan21p}
\bysame, \emph{Parabolic and near-parabolic renormalizations for local degree
  three}, arXiv: 1510.00043v4, 2021.

\bibitem[Yoc88]{Yoc88}
J.-C. Yoccoz, \emph{Lin\'{e}arisation des germes de diff\'{e}omorphismes
  holomorphes de {$({\bf C}, 0)$}}, C. R. Acad. Sci. Paris S\'{e}r. I Math.
  \textbf{306} (1988), no.~1, 55--58.

\bibitem[Yoc95]{Yoc95}
\bysame, \emph{Th\'{e}or\`eme de {S}iegel, nombres de {B}runo et polyn\^{o}mes
  quadratiques}, Ast\'{e}risque (1995), no.~231, 3--88.

\bibitem[Yoc02]{Yoc02}
\bysame, \emph{Analytic linearization of circle diffeomorphisms}, Dynamical
  systems and small divisors ({C}etraro, 1998), Lecture Notes in Math., vol.
  1784, Springer, Berlin, 2002, pp.~125--173.

\bibitem[YZ01]{YZ01}
M.~Yampolsky and S.~Zakeri, \emph{Mating {S}iegel quadratic polynomials}, J.
  Amer. Math. Soc. \textbf{14} (2001), no.~1, 25--78.

\bibitem[Zak99]{Zak99}
S.~Zakeri, \emph{Dynamics of cubic {S}iegel polynomials}, Comm. Math. Phys.
  \textbf{206} (1999), no.~1, 185--233.

\bibitem[Zak10]{Zak10}
\bysame, \emph{On {S}iegel disks of a class of entire maps}, Duke Math. J.
  \textbf{152} (2010), no.~3, 481--532.

\bibitem[ZFS20]{ZFS20}
S.~Zhang, J.~Fu, and X.~Shi, \emph{Bounded type {S}iegel disks of a family of
  sine families}, J. Math. Anal. Appl. \textbf{488} (2020), no.~1, 124041, 12
  pp.

\bibitem[Zha05]{Zha05}
G.~Zhang, \emph{On the dynamics of {$e^{2\pi i\theta}\sin(z)$}}, Illinois J.
  Math. \textbf{49} (2005), no.~4, 1171--1179.

\bibitem[Zha11]{Zha11}
\bysame, \emph{All bounded type {S}iegel disks of rational maps are
  quasi-disks}, Invent. Math. \textbf{185} (2011), no.~2, 421--466.

\bibitem[Zha14]{Zha14}
\bysame, \emph{Polynomial {S}iegel disks are typically {J}ordan domains},
  arXiv: 1208.1881v3, 2014.

\bibitem[Zha16]{Zha16}
\bysame, \emph{On {PZ} type {S}iegel disks of the sine family}, Ergodic Theory
  Dynam. Systems \textbf{36} (2016), no.~3, 973--1006.

\end{thebibliography}

\end{document}